\documentclass[11pt,a4paper,reqno]{amsart}
\usepackage[margin=2.5cm]{geometry}
\usepackage{color}               
\usepackage{faktor}
\usepackage{tikz-cd}
\usepackage{enumitem}
\usepackage{hyperref}

\usepackage{amsmath,amssymb,anyfontsize,stmaryrd,mathtools, thmtools,tikz}
\usepackage{mathrsfs}
\usepackage[oldstyle]{libertine}%
\usepackage[T1]{fontenc}
\usepackage[utf8]{inputenc}
\usepackage{tikz-cd}
\usepackage[noabbrev]{cleveref}
\usepackage{autonum}
\usepackage{rotating}

\usepackage{url}
\urlstyle{same}
\hyphenation{Gro-then-dieck}
\crefname{lemma}{lemma}{lemmata}
\Crefname{lemma}{Lemma}{Lemmata}
\crefname{subsection}{subsection}{subsections}
\Crefname{subsection}{Subsection}{Subsections}

\newtheorem{theorem}{Theorem}[section]
\newtheorem{lemma}[theorem]{Lemma}
\newtheorem{proposition}[theorem]{Proposition}
\newtheorem{corollary}[theorem]{Corollary}
\newtheorem{conjecture}[theorem]{Conjecture}

\newtheorem*{theorem*}{Theorem}
\theoremstyle{definition}
\newtheorem{definition}[theorem]{Definition}
\newtheorem{remark}[theorem]{Remark}

\newtheorem{example}[theorem]{Example}

\newcommand{\mb}[1]{\mathbb{#1}}

\newcommand{\C}{\mb{C}} 
\newcommand{\Z}{\mb{Z}}
\newcommand{\fG}{\mathfrak{G}} 
\newcommand{\fI}{\mathfrak{I}} 
\newcommand{\cA}{\mathcal{A}}

  \newcommand{\rk}{\mathrm{rank}}

\newcommand{\Ch}{{\mathfrak{ C}}}
\newcommand{\Oh}{{\mathcal O}}

\newcommand{\Syz}{\mathrm{Syz}}

\newcommand{\cE}{\mathcal{E}}

\newcommand\restr[2]{{% we make the whole thing an ordinary symbol
  \left.\kern-\nulldelimiterspace % automatically resize the bar with \right
  #1 % the function
  \vphantom{\big|} % pretend it's a little taller at normal size
  \right|_{#2} % this is the delimiter
  }}

\newcommand\restrst[2]{{% we make the whole thing an ordinary symbol
  \left.\kern-\nulldelimiterspace % automatically resize the bar with \right
  #1 % the function
  \vphantom{\big|} % pretend it's a little taller at normal size
  \right|^*_{#2} % this is the delimiter
  }}

\begingroup\expandafter\expandafter\expandafter\endgroup
\expandafter\ifx\csname pdfsuppresswarningpagegroup\endcsname\relax
\else
  \pdfsuppresswarningpagegroup=1\relax
\fi
{}

\title{Mustafin models of projective varieties and vector bundles}
\author[M.~A.~Hahn]{Marvin Anas Hahn}
\address{M.~A.~Hahn: Institut für Mathematik, Goethe-Universität Frankfurt, Robert-Mayer-Str. 6-8, 60325 Frankfurt am Main}
\email{hahn@math.uni-frankfurt.de}

\keywords{Mustafin degenerations, degenerations of varieties, $p-$adic Simpson correspondence}
\subjclass[2010]{13P10,14H60,14G20,11G20}

\begin{document}
\begin{abstract}
Mustafin varieties are well-studied degenerations of projective spaces induced by a choice of integral points in a Bruhat--Tits building. In recent work, Annette Werner and the author initiated the study of degenerations of plane curves obtained by Mustafin varieties by means of arithmetic geometry. Moreover, we applied these techniques to construct models of vector bundles on plane curves with strongly semistable reduction. In this work, we take a Groebner basis approach to the more general problem of studying degenerations of projective varieties. Our methods include determining the behaviour of Groebner bases under substitution over unique factorisation rings. Finally, we outline applications to the $p-$adic Simpson correspondence, when the respective projective variety is a curve.
\end{abstract}
\maketitle
%\tableofcontents

\section{Introduction}
In this paper, we study degenerations of projective varieties induced by point configurations in the Bruhat-Tits buildings associated to $\mathrm{GL}(V)$. At the heart of our considerations are so-called \textit{Mustafin varieties}. Mustafin varieties were first introduced by Mustafin in \cite{mustafin1978nonarchimedean} in order to generalise Mumford's seminal work on the uniformisation of curves \cite{mumford1972analytic}. Since then, they have attracted a lot of interest. In particular, they have found applications in the context of Shimura varieties \cite{fa} and Chow quotients of Grassmannians \cite{zbMATH05051634}. In recent years, especially their interesting feature that their reductions may be described in terms of the combinatorics of convex hulls in Bruhat--Tits buildings has been studied. A conceptual combinatorial framework for this perspective was developed in \cite{cartwright2011mustafin} and used in \cite{hahn2017mustafin} to establish relations between Mustafin varieties, computer vision and limit linear series. Applications to limit linear series were further investigated in \cite{he2019linked}. A generalisation to degenerations of flag varieties was introduced in \cite{habich2014mustafin}.\\
Another interesting feature is that one may choose global coordinates on Mustafin varieties, which enables the use of computer algebra techniques to study these degenerations.\\
For any projective embedding of a projective variety $X$, any Mustafin variety of the ambient projective space induces a degeneration $\mathcal{X}$ of $X$ by restriction. We call the obtained model a \textit{Mustafin degeneration} of $X$. Such models have first been studied in recent work of Werner and the author, when $X$ is a curve embedded into the plane \cite{HWplane}. The main motivation for this work stemmed from recent advances towards a $p-$adic Simpson correspondence. The classical Simpson correspondence in dimension one establishes a correspondence between semistable degree zero Higgs bundles on a Riemann surface $X$ and representations of its topological fundamental group  \cite{simpson1990nonabelian,simpson1992higgs}. In recent years, the question whether a similar result holds in the $p-$adic setting has developed to a major research theme in arithmetic geometry \cite{dewe1,dewe4,fa,abbes2016p, liuzhu}. In \cite{fa}, Faltings proved an equivalence of categories between Higgs bundles on a $p-$adic curve and so-called \textit{generalised representations} of its étale fundamental group. These generalised representations contain the continuous representations of the \'{e}tale fundamental group as a full subcategory. The remaining challenge is to identify the subcategory of Higgs bundles, which is equivalent to the category of continuous representations. This is still an open problem. An approach for Higgs bundles with trivial Higgs field was introduced in \cite{dewe1, dewe4} by Deninger and Werner and shown to be compatible with Faltings' functor in \cite{xu} (see also \cite{wurthen2019vector}). More precisely, it is shown that a semistable vector bundle on a proper, smooth $p$-adic curve $X$ which has strongly semistable reduction of degree zero after pullback to a finite covering of the curve  admits $p$-adic parallel transport and hence gives rise to a continuous representation of the \'etale fundamental group. One is of course tempted to speculate that the desired category is that of semistable Higgs bundles of degree zero. By the results of Deninger and Werner a positive answer for semistable degree zero bundles with trivial Higgs fields can be given if we prove a potentially strongly semistable reduction theorem for all such bundles. This involves -- possibly after pull-back to a finite covering -- finding for any semistable vector bundle $E$ of degree zero on a projective curve $X$, a model $\mathcal{E}$ on a model of the curve $\mathcal{X}$, such that the special fibre of $\mathcal{E}$ is strongly semistable on each irreducible component. This is as might be expected a very difficult task.\\
In \cite{HWplane}, Werner and the author proposed to use Mustafin degenerations of projective curves to construct such models. This approach has proved to be quite succesful in \cite{HWplane}, as we were able to construct models of a class of syzygy bundles on projective curves with strongly semistable reduction. The models were flexible enough to refute a proposed counterexample, which was suggested by Brenner in \cite{br2}, to the above speculation that the desired category is that of semistable Higgs bundles of degree zero. The case of syzygy bundles is of particular importance since all vector bundles on projective curves may be realised as syzygy bundles after tensor product with a line bundle (\cref{lem-allsyz}). Moreover, it was shown in \cite{dewe1,dewe4} that potentially strongly semistable reduction is compatible with tensor products and that line bundles admit strongly semistable reduction. Therefore, a complete classification of syzygy bundles with potentially strongly semistable reduction also yields a complete classification of all vector bundles with potentially strongly semistable reduction.\\
The above discussion indicates that a conceptual framework for the study of Mustafin degenerations of projective varieties is needed. While the progress in \cite{HWplane} is promising, the methods rely on some intricate arithmetic geometry. In this paper, we instead propose a computer algebra approach towards this problem. More precisely, we generalise the results in \cite{HWplane} using only the theory of Groebner bases. This paper should therefore be seen as the proposal for a computer algebra programme towards  some open problems in the $p-$adic Simpson correspondence.\\
We begin by proposing a conjecture on the generic behaviour of a large class of Mustafin varieties (\cref{conj-must}). This conjecture seems closely related to \cite[conjecture 1.1]{conca2007linear} and the theory of generic initial ideals (\cref{rem-genin}). We prove \cref{conj-must} over several base fields in \cref{sec:first} in dimension $3$ by direct computations in \textsc{Singular} \cite{DGPS}; in particular over the $p-$adic numbers $\mathbb{Q}_p$ for $p\gg0$. In dimension $2$ over any field it was proved in \cite[Lemma 3.2]{HWplane}. Based on this conjecture, we formulate our main theorem in \cref{thm-main1} in which we completely determine the combinatorial structure of Mustafin degenerations of a projective variety, whenever the inducing point configuration is of the type described in \cref{conj-must}.\\
Our main tool in the proof of \cref{thm-main1} is the use of Groebner bases over unique factorisation domains. In particular, we give sufficient criteria when such Groebner bases remain Groebner bases after evaluating some of the variables. While when the base ring is a field, this is a well-studied topic which has led to the notion of a \textit{comprehensive Groebner basis} \cite{weispfenning1992comprehensive, weispfenning2003canonical}, the problem for more general rings is largely unexplored. Our results regarding this problem are therefore also of independent interest.\\
Finally, in \cref{sec:models} we use \cref{thm-main1} to construct models of syzygy bundles on any projective curve, which under the assumption of \cref{conj-must} admit strongly semistable reduction. Since any smooth projective curve may be realised in dimension $3$ and by the results in \cref{sec:first}, we have therefore produced models of vector bundles on any projective curve over $\mathbb{Q}_p$ with $p\gg0$ of arbitrary rank. Thus, for $p\gg0$ we provide new families of semistable vector bundles of degree $0$ on any smooth projective curve which fit into the framework of the $p-$adic Simpson correspondence.

\subsection*{Acknowledgements} The author is indebted to Annette Werner for many interesting discussions and comments  on this project. We also thank Holger Brenner for helpful comments and for making us aware of \cref{lem-allsyz}. The author gratefully acknowledge support of the LOEWE research unit Uniformized Structures in Arithmetic and Geometry. Many computations for this project were aided by \textsc{Singular} \cite{DGPS}.

\section{Preliminaries}
For the rest of this paper, let $K$ be a non-archimedean field with ring of integers $\mathcal{O}_K$, maximal ideal $\mathfrak{m}_K$ and residue field $k\cong\faktor{\mathcal{O}_K}{\mathfrak{m}_K}$. Further, let $\pi$ be a fixed uniformiser of $\mathcal{O}_K$.

\subsection{Mustafin varieties}
\label{sec:premust}
In this subsection, we discuss the basic notions revolving around \textit{Mustafin varieties} (see also \cite{cartwright2011mustafin,hahn2017mustafin}). Let $V$ be a vector space of dimension $d$ over $K$. Regarding $V$ as an $\mathcal{O}_K-$module, we call a free $\mathcal{O}_K$-submodule $L\subset V$ of rank $d$ a \textit{lattice}. We further define
\begin{equation}
\mathbb{P}(V)=\mathrm{Proj}\mathrm{Sym} V^\ast\quad\textrm{and}\quad \mathbb{P}(L)=\mathrm{Proj}\mathrm{Sym}\left(\mathrm{Hom}_{\mathcal{O}_K}(L,\mathcal{O}_K)\right).
\end{equation}
Mostly, we will consider homothety classes of lattices, i.e. we call two lattices $L,L'$ equivalent if $L=c\cdot L'$ for $c\in K^\ast$. We denote the homothety class of a lattice $L$ by $[L]$.

\begin{definition}
\label{def:musta}
Let $\Gamma=\{[L_0],\dots,[L_n]\}$ be a set of rank $d$ lattice classes in $V$. Then $\mathbb{P}(L_0),\dots,\allowbreak\mathbb{P}(L_n)$ are projective spaces over $\mathcal{O}_K$ whose generic fibres are canonically isomorphic to $\mathbb{P}(V)\simeq\mathbb{P}^{d-1}_{K}$. The open immersions
\begin{equation}
\mathbb{P}(V)\hookrightarrow\mathbb{P}(L_i)
\end{equation}
give rise to a map
\begin{equation}
\label{equ:mapmust}
f_{\Gamma}:\mathbb{P}(V)\longrightarrow\mathbb{P}(L_0)\times_{\mathcal{O}_K}\dots\times_{\mathcal{O}_K}\mathbb{P}(L_n).
\end{equation}
We denote the closure of the image endowed with the reduced scheme structure by $\mathcal{M}(\Gamma)$. We call $\mathcal{M}(\Gamma)$ the \textit{Mustafin variety associated to} $\Gamma$. Its special fibre $\mathcal{M}(\Gamma)_k$ is a reduced scheme over $k$ by \cite[Theorem 2.3]{cartwright2011mustafin}.\end{definition}

There is a natural way to choose coordinates on Mustafin varieties. For this, we fix a reference lattice $L=\mathcal{O}_Ke_1+\dots+\mathcal{O}_Ke_d$, where $e_1,\dots,e_d$ is the standard basis of $V$. For $L_0,\dots,L_n$ as in \cref{def:musta}, we can find $g_0,\dots,g_n\in\mathrm{PGL}(V)$ such that $g_iL=L_i$. We consider the commutative diagram
\begin{equation}
\begin{tikzcd}
\mathbb{P}(V)\arrow{rr}{(g_0^{-1},\dots,g_n^{-1})\circ\Delta} \arrow{d} &  & \mathbb{P}(V)^n\arrow{d}\\
\prod_{R}\mathbb{P}(L_i) \arrow{rr}{(g_1^{-1},\dots,g_n^{-1})} &  & \mathbb{P}(L)^n.
\end{tikzcd}
\end{equation}
Let $x_1,\dots,x_d$ be the coordinates on $\mathbb{P}(L)$ and consider the projections
\begin{equation}
P_j:\mathbb{P}(L)^n\to\mathbb{P}(L)
\end{equation}
to the $j-$th factor. Then, we denote $x_{ij}=P_j^*x_i$ and observe that the Mustafin variety $\mathcal{M}(\Gamma)$ is isomorphic to the subscheme of $\mathbb{P}(L)^n$ cut out by
\begin{equation}
\label{equ-idmust}
I_2\begin{pmatrix}
g_1\begin{pmatrix}
x_{10}\\
\vdots\\
x_{d0}
\end{pmatrix} &
\cdots
& g_n\begin{pmatrix}
x_{1n}\\
\vdots\\
x_{dn}
\end{pmatrix}
\end{pmatrix}\cap \mathcal{O}_K[(x_{ij})].
\end{equation}
By
\begin{equation}
p_j=\restr{P_j}{\mathcal{M}(\Gamma)}:\mathcal{M}(\Gamma)\hookrightarrow\mathbb{P}(L)^n\to\mathbb{P}(L)
\end{equation}
we denote the projection to the $j-$th component. We write $x_{ij}$ also for the induced rational function on $\mathcal{M}(\Gamma)$. By \cite[Corollary 2.5]{cartwright2011mustafin}, for each $i$ there exists a unique irreducible component $X$ of $\mathcal{M}(\Gamma)_k$ which maps birationally onto $\mathbb{P}(L)_k$ via the map on the special fibre induced by $p_i$. We call $X$ the $i-$\textit{th primary component} of $\mathcal{M}(\Gamma)_k$. Furthermore, we give the following definition.

\begin{definition}
\label{def-length}
Let $\Gamma=\{[L_0],\dots,[L_n]\}$ be a set of lattices and $\mathcal{M}(\Gamma)$ be its associated Mustafin variety and let $C\subset\mathcal{M}(\Gamma)_k$ be an irreducible component of the special fibre. Further, let $J_C\subset\{0,\dots,n\}$ be the maximal subset of $\{0,\dots,n\}$, such that we have $\mathrm{dim}(\mathrm{pr}_j(C))>0$. For $l\coloneqq|J:C|$, we call $C$ a component of length $l$ of $\mathcal{M}(\Gamma)_k$. We set $\mathcal{M}(\Gamma)_{k,\le l}$ the union of all irreducible components of length $\le l$. Moreover, we call call $J_C$ the \textit{support of} $C$ and denote $\mathrm{supp}(C)=J_C$.
\end{definition}

\subsection{Syzygy bundles}
We consider syzygy sheaves on the projective space which are the kernel of a morphism to the structure sheaf. To be precise, let $f_0, \ldots, f_{n}$ be homogeneous polynomials in $K[x_1,\dots,x_N]$ with degrees $d_0, \ldots, d_{n}$. Then the corresponding syzygy sheaf $\Syz(f_0, \ldots, f_{n})$ on $\mathbb{P}_K^{N-1}$ is defined as the kernel

\[ 0 \longrightarrow \Syz(f_0, \ldots, f_{n}) \longrightarrow \bigoplus_{i=0}^{n} \Oh(-d_i) \xrightarrow{(f_0, \ldots, f_{n})}\mathcal{O}.\]
The sheaf $\Syz(f_0, \ldots, f_{n})$ is locally free on $\bigcup D_+(f_i)$. 

In this work, we will be concerned with vector bundles of degree zero on curves. Therefore, we consider the twisted sheaves $\Syz(f_0, \ldots, f_{n})(\rho)$ when $\sum d_i=n\rho$.

\begin{remark}
We note that usually a coherent sheaf $\mathcal{F}$ on $X$ is called a $k-$\textit{th syzygy sheaf} if for each $x\in X$, there exist an open neighbourhood $U$ of $x$, locally free sheaves $\mathcal{G}_1,\dots,\mathcal{G}_k$ on $U$ and an exact sequence
\begin{equation}
0\to\restr{\mathcal{F}}{U}\to\mathcal{G}_1\to\dots\to\mathcal{G}_k.
\end{equation}
Thus the sheaf $\Syz(f_1, \ldots, f_{n+1})$ is a second syzygy sheaf.
\end{remark}

As mentioned in the introduction, all vector bundles on smooth projective curves may be realised as syzygy bundles after tensor product with an invertible sheaf. The following lemma and proof was communicated to us by Holger Brenner.

\begin{lemma}
\label{lem-allsyz}
Let $C$ be a smooth projective curve over an algebraically closed field and $E$ a vector bundle on $C$ of rank $r$. Then, there exists a line bundle $L$ on $X$, a polarisation $\mathcal{O}(1)$ of $X$ and $n\in\mathbb{Z}_{\ge1}$, such that we have an exact sequence
\begin{equation}
0\to E\otimes L\to \mathcal{O}^{r+1}\to \mathcal{O}(n)\to 0.
\end{equation} 
\end{lemma}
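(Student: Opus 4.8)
The plan is to realize $E \otimes L$ as a second syzygy sheaf by dualizing a suitable presentation of the dual bundle. First I would pass to $E^\vee$ and look for a surjection $\mathcal{O}^{r+1} \twoheadrightarrow E^\vee \otimes M$ for some line bundle $M$ of sufficiently positive degree, whose kernel is a line bundle. To arrange this, choose $\mathcal{O}(1)$ to be any very ample line bundle on $C$ and set $M = \mathcal{O}(m)$ for $m \gg 0$; then $E^\vee(m)$ is globally generated, and by a general-position argument (Serre's theorem / the usual Bertini-type count, valid since the base field is algebraically closed and $\dim C = 1$) one can pick $r+1$ global sections spanning $E^\vee(m)$ at every point. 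The resulting surjection $\varphi\colon \mathcal{O}^{r+1} \to E^\vee(m)$ has kernel a vector bundle of rank $1$, hence a line bundle $N$; here one uses that on a smooth curve a torsion-free coherent subsheaf of a locally free sheaf is locally free.

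Next I would dualize the short exact sequence $0 \to N \to \mathcal{O}^{r+1} \to E^\vee(m) \to 0$. Since all terms are locally free, applying $\mathcal{H}om(-,\mathcal{O})$ is exact and yields
\begin{equation}
0 \to E(-m) \to \mathcal{O}^{r+1} \to N^\vee \to 0.
\end{equation}
Now $N^\vee$ is a line bundle; I would compute its degree from additivity of degrees in the original sequence, namely $\deg N = -\deg\big(E^\vee(m)\big) = \deg E - r m$, so $\deg N^\vee = rm - \deg E$, which is positive for $m$ large. On a smooth projective curve a line bundle of sufficiently large degree relative to a fixed very ample $\mathcal{O}(1)$ is of the form $\mathcal{O}(n)$ only after possibly adjusting by a further line bundle; to get exactly $\mathcal{O}(n)$ I would instead keep the freedom in $M$: replace $M = \mathcal{O}(m)$ by $M = \mathcal{O}(m) \otimes N_0$ for an auxiliary line bundle $N_0$ chosen so that the resulting $N^\vee$ becomes a prescribed power $\mathcal{O}(n)$ of the polarization. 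Concretely, since $\mathrm{Pic}(C)$ is generated up to the degree map with enough flexibility, and since we are free to twist the whole construction, set $L \coloneqq \mathcal{O}(-m) \otimes (\text{correction})$ so that $E(-m)\otimes(\text{correction}) = E \otimes L$ and $N^\vee = \mathcal{O}(n)$; this is a matter of bookkeeping in the Picard group.

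The main obstacle is precisely this last normalization: ensuring the cokernel line bundle is literally a power $\mathcal{O}(n)$ of a single polarization, rather than merely \emph{some} line bundle of large degree. The clean way around it is to choose the polarization \emph{after} the construction: given the surjection $\mathcal{O}^{r+1} \to E^\vee \otimes M$ with line-bundle kernel $N$, simply declare $\mathcal{O}(1) \coloneqq N^\vee$ (which is automatically very ample once $\deg M$ is large enough, since it is a quotient-determined line bundle of high degree, or after one more general-position adjustment of the sections), take $n = 1$, and set $L = M^\vee$. Then the dualized sequence reads $0 \to E \otimes L \to \mathcal{O}^{r+1} \to \mathcal{O}(1) \to 0$, which is the assertion with $n=1$; allowing general $n$ only gives more room. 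The remaining points to verify are standard: global generation of $E^\vee \otimes M$ for $\deg M \gg 0$ (Serre vanishing), the genericity of $r+1$ spanning sections on a curve (dimension count over an algebraically closed field), local freeness of the rank-one kernel (torsion-free over a Dedekind domain), and exactness of dualization for locally free sheaves.
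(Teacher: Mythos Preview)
Your argument is correct and follows the same strategy as the paper: twist $E^\vee$ to be globally generated, surject from $\mathcal{O}^{r+1}$ via $r+1$ general sections (the paper invokes a lemma of Brenner for this step, which is exactly your Bertini-type count on a curve), and dualize. The only difference is how the cokernel is forced to be a power of the polarization: the paper fixes $\mathcal{O}(1)=\det(E^\vee\otimes M^{\otimes l})$ in advance so that the kernel automatically becomes $\mathcal{O}(-n)$, whereas you simply declare $\mathcal{O}(1)\coloneqq N^\vee$ after the fact and take $n=1$; both work.
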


\begin{proof}
Let $E^\vee$ be the vector bundle dual to $E$. Let $M$ be a very ample line bundle on $C$, then there exists $l\in\mathbb{N}$, such that $N=\mathrm{det}(E^\vee\otimes M^{\otimes l})$ is very ample. We then set $F=E^\vee\otimes M^{\otimes l}$. Let $C$ be polarised by $N$. Then, there exists $m\in\mathbb{N}$, such that $F(m)=E^\vee\otimes M^{\otimes l}\otimes N^{\otimes m}$ is globally generated. As proved in \cite[lemma 2.3]{brenner2006bounds}, there exists a surjection
\begin{equation}
\mathcal{O}^{r+1}\to F(m).
\end{equation}
Then $L'=\mathrm{ker}(\mathcal{O}^{r+1}\to F(m))$ is a line bundle. We have the exact sequence
\begin{equation}
\label{equ-sequ1}
0\to L'\to \mathcal{O}^{r+1}\to F(m)\to 0.
\end{equation}
It is well-known that for a short exact sequence
\begin{equation}
0\to U\to V\to W\to 0
\end{equation}
of vector bundles on a smooth projective curve, we have $\mathrm{det}(U)\otimes\mathrm{det}(W)\cong\mathrm{det}(V)$. Thus, for \cref{equ-sequ1}, we obtain $\mathrm{det}(L')\otimes\mathrm{det}(F(m))=\mathcal{O}$. As $\mathrm{det}(L')=L'$ and $\mathrm{det}(F(m))=N^{\otimes(r+m)}=\mathcal{O}(r+m)$, we have $L'=\mathcal{O}(-(r+m))$. Therefore, we obtain an exact sequence
\begin{equation}
0\to \mathcal{O}(-(r+m))\to\mathcal{O}^{r+1}\to F(m)\to 0.
\end{equation}
After dualising, we obtain
\begin{equation}
0\to E \otimes (M^\vee)^{\otimes l}(-m)\to\mathcal{O}^{r+1}\to \mathcal{O}(r+m)\to 0.
\end{equation}
Setting $n=r+m$ and $L=(M^\vee)^{\otimes l}(-m)$, the lemma follows.
\end{proof}

\subsection{Semistability of vector bundles and parallel transport for $p$-adic vector bundles}
\label{sec:semi}
Recall that a vector bundle $E$  on  a smooth, projective and connected curve $C$ over a field $\kappa$ is semistable (respectively stable), if for all proper non-zero subbundles $F$ of $E$ the inequality $\deg (F)/\rk(F)  \le \deg (E)/\rk(E)$ (respectively $\deg (F)/\rk(F) < \deg (E)/\rk(E) $) holds.

If $\kappa$ has positive characteristic, semistability has weaker properties than in characteristic zero, since this property may be lost under pullback by inseparable morphism. This explains the following notion of strong semistability.

Assume that $\mbox{char} (\kappa) = p>0$, and let
$F : C \to C$ be the absolute Frobenius morphism, defined by the $p$-power map
on the structure sheaf. 
Then a vector bundle $E$ on $C$ is called strongly semistable, 
if $F^{n*} E$ is semistable on $C$ for all $n
\ge 1$.

\begin{definition}
\label{def:strong}
Let $E$ be a vector bundle on a one-dimensional proper scheme $C$ over a field $\kappa$ of characteristic $p$. Then $E$ is called strongly semistable of degree zero, if the pullback of $E$ to all normalized irreducible components of $C$ is strongly semistable of degree zero.
\end{definition}

Consider a smooth, projective and connected curve $C$ over $\overline{\mathbb{Q}}_p$, and denote by $C_{\C_p}$ the base change to the field $\C_p$ (which is the completion of the algebraic closure $\overline{\mathbb{Q}}_p$). By $\mathfrak{o}$ we denote the ring of integers of $\C_p$. Its residue field is isomorphic to $\overline{\mathbb{F}}_p$. We call every finitely presented, flat and proper $\overline{\Z}_p$-scheme $\mathfrak{C}$ with generic fibre $C$ a model of $C$.

\begin{definition}
\label{def:strongred}
A vector bundle $E$ on $C_{\C_p}$ has strongly semistable reduction of degree zero, if there exists a model $\Ch$ of $C$ and a vector bundle $\mathcal{E}$ on $\Ch_{\mathfrak{o}} = \Ch \otimes_{\overline{\mathbb{Z}}_p} \mathfrak{o}$ such that $\mathcal{E}$ has generic fibre $E_{\mathbb{C}_p}$ and such that the special fibre  $\mathcal{E}_{\overline{\mathbb{F}}_p}$ of $\mathcal{E}$ is strongly semistable of degree zero on the one-dimensional proper scheme $\Ch \otimes_{\overline{\mathbb{Z}}_p} \mathbb{F}_p$ in the sense of  \cref{def:strong}.
\end{definition}

In \cite{dewe1} and \cite{dewe3}, a theory of parallel transport along \'etale paths is defined for those vector bundles $E$ of degree zero on $C_{\C_p}$ for which there exists a finite, \'etale covering $\alpha: C' \rightarrow C$ such that the  bundle $\alpha_{\C_p}^\ast E$ on $C'_{\C_p}$ has strongly semistable reduction of degree zero. We note that if $E$ has strongly semistable reduction of degree zero, then $E$ is semistable of degree zero \cite[Theorem 13]{dewe1}.

\begin{definition}
\label{def:potstrongred}
A vector bundle $E$ of degree zero on $C_{\C_p}$ has \textit{potentially strongly semistable reduction} if there exists a finite (not necessarily \'etale) covering $\alpha: C' \rightarrow C$ such that the bundle $\alpha^\ast_{\C_p} E$ on $C'_{\C_p}$ has strongly semistable reduction.
\end{definition}

It is an important open question if all semistable bundles of degree zero on $C_{\C_p}$ have potentially strongly semistable reduction in this sense. In fact,  \cite[Theorem 10]{dewe4} implies that all bundles with potentially strongly semistable reduction admit $p$-adic parallel transport. Hence, using \cite{xu} and \cite{fa}, a positive answer to this question would imply that all semistable bundles of degree zero on $C_{\C_p}$ with trivial Higgs field correpond to $p$-adic representations of the \'etale fundamental group under the $p$-adic Simpson correspondence, which would represent a big step in the directon of a $p$-adic result which is analogous to the classical Simpson correspondence.\\
In \cite{br2}, an counter example for the claim that any semistable vector bundle of degree zero on $C_{\C_p}$ admits potentially strongly semistable reduction was proposed. In previous work \cite{HWplane}, the authors were able to refute this counter example the techniques we generalise in this work.

\subsection{Groebner bases over noetherian domains}
We now briefly recall the theory of Groebner bases over general noetherian domains. We will mostly be interested in Groebner basws for polynomial ideals with coefficients in a discrete valuation ring.

\begin{definition}
A total order $<$ on $\mathbb{N}^n$ is called a \textit{term order} if $\underline{0}=(0,\dots,0)$ is the minimal element and $\alpha<\beta$ implies $\alpha+\gamma<\beta+\gamma$ for all $\alpha,\beta,\gamma\in\mathbb{N}^n$.
\end{definition}

\begin{example}
An important example of a term order is the so-called \textit{lexicographic order}. We define $\alpha<\beta$ for $\alpha,\beta\in\mathbb{N}^n$ if for the smallest $j$, such that $\beta_j-\alpha_j\neq0$, we have $\beta_j-\alpha_j>0$.
\end{example}

Let $R$ be a commutative ring and define $A=R[x_1,\dots,x_n]$. As usual, we associate to $\alpha=(\alpha_1,\dots,\alpha_n)\in\mathbb{N}^{n}$ the monomial $x^{\alpha}=x_1^{\alpha_1},\dots,x_n^{\alpha_n}$.

\begin{definition}
Let $f=\sum_{\alpha\in\mathbb{N}^n}c_{\alpha}x^{\alpha}\in A$ and $<$ a monomial order on $\mathbb{N}^n$. We define
\begin{itemize}
\item $\mathrm{DEG}(f)=\mathrm{max}_<\{\alpha\mid c_{\alpha}\neq 0\}$,
\item $\mathrm{lm}(f)=x^{\mathrm{DEG}(f)}$,
\item $\mathrm{lc}(f)=c_{\mathrm{DEG}(f)}$,
\item $\mathrm{lt}(f)=c_{\mathrm{DEG}(f)}x^{\mathrm{DEG}(f)}$.
\end{itemize}
\end{definition}

We now define Groebner bases for ideals in $A$ in the usual sense.

\begin{definition}
\label{def-groe}
Let $E\subset A$ and fixed a term order $<$, then we define
\begin{equation}
\mathrm{Lt}_<(E)=\{lt(f)\mid f\in E\}.
\end{equation}
Let $0\neq I\subset A$ be an ideal and $G\subset I$ a finite generating set. We say $G$ is a Groebner basis of $I$ with respect to a term order $<$ if
\begin{equation}
\mathrm{Lt}_<(G)=\mathrm{Lt}_<(I).
\end{equation}
\end{definition}

The classical Buchberger criterion for Groebner bases has a similar analogue for arbitrary base rings. In order to state the modified criterion, we first introduce the following notion.

\begin{definition}
Let $E=\{g_1,\dots,g_m\}\subset A$ be a set of non-zero polynomials in $A$ and let $f,g\in A$ be polynomials. We say \textit{$f$ reduces to $g$ modulo $E$ in one step}, denoted by $f\xrightarrow{E} g$, if
\begin{itemize}
\item There exists at least one index $j\in\{1,\dots,s\}$, such that $\mathrm{lc}(f_j)$ divides $\mathrm{lc}(f)$.
\item For $J=\{j\mid \mathrm{lm}(f_j)\,\textrm{divides}\,\mathrm{lm}(f)\}$, there exists $(c_j)_{j\in J}$, such that
\begin{equation}
\sum_j c_j\mathrm{lc}(f_j)=\mathrm{lc}(f)
\end{equation}
and
\begin{equation}
h=f-\sum_{j\in J}c_j\frac{\mathrm{lm}(f)}{\mathrm{lm}(f_j)}\mathrm{lt}(f_j).
\end{equation}
\end{itemize}
We further say \textit{$f$ reduces to $g$ modulo $E$}, denoted by $f\xrightarrow{E}_+ g$, if there exist $h_1,\dots,h_s\in A$, such that
\begin{equation}
f\xrightarrow{E} h_1 \xrightarrow{E} \dots \xrightarrow{E} h_s \xrightarrow{E} g.
\end{equation}
\end{definition}

We are now ready to state the modified version of the Buchberger criterion.

\begin{theorem}[{\cite[Theorem 3.6]{rutman1992grobner},\cite[Theorem 14]{lezama2008grobner}}]
\label{thm-groe}
For any ideal $I\subset A$, there exists a Groebner basis. Moreover, let $G=\{g_1,\dots,g_m\}$ be a finite subset of non-zero vectors of $A$. We denote by $\mathrm{Syz}(\mathrm{lt}(g_1),\dots,\mathrm{lt}(g_m))$ the submodule of $A^m$ of vectors $(v_1,\dots,v_m)\in A^m$, such that
\begin{equation}
\sum v_i\mathrm{lt}(g_i)=0.
\end{equation}
Let $B$ be a finite generating set of $\mathrm{Syz}(\mathrm{lt}(g_1),\dots,\mathrm{lt}(g_m))$.  Then $G$ is a Groebner basis for $\langle G\rangle$ if and only if for any $(v_1,\dots,v_m)\in B$, we have
\begin{equation}
\sum v_i g_i\xrightarrow{G}_+ 0.
\end{equation}
\end{theorem}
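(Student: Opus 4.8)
The plan is to derive both assertions from two structural inputs: that $A=R[x_1,\dots,x_n]$ is Noetherian by the Hilbert basis theorem, and that every term order on $\mathbb{N}^n$ is a well-order (Dickson's lemma). For the existence statement I would pass to the leading-term ideal $L(I)=\langle\,\mathrm{lt}(f)\mid f\in I\,\rangle\subseteq A$. It is finitely generated, and since it is generated by terms one may choose generators of the shape $\mathrm{lt}(g_1),\dots,\mathrm{lt}(g_m)$ with $g_i\in I$; I claim $G=\{g_1,\dots,g_m\}$ is a Groebner basis of $I$. The key point is a one-step reduction lemma: if $0\neq f\in I$, then $\mathrm{lt}(f)\in L(I)$, and extracting the multidegree-$\mathrm{DEG}(f)$ component of a representation $\mathrm{lt}(f)=\sum_j p_j\,\mathrm{lt}(g_j)$ shows $\mathrm{lc}(f)=\sum_{j\in J}d_j\,\mathrm{lc}(g_j)$ with $J=\{j\mid\mathrm{lm}(g_j)\text{ divides }\mathrm{lm}(f)\}$ and $d_j\in R$ --- which is exactly the data needed for a reduction step $f\xrightarrow{G}h$ with $h\in I$ and $\mathrm{DEG}(h)<\mathrm{DEG}(f)$. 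Iterating and using that $<$ is a well-order yields $f\xrightarrow{G}_+0$, hence $f\in\langle G\rangle$ and $\mathrm{lt}(f)\in\langle\mathrm{Lt}_<(G)\rangle$. Read backwards, the same computation gives the working characterisation I will use below: $G$ is a Groebner basis of $\langle G\rangle$ if and only if every element of $\langle G\rangle$ reduces to $0$ modulo $G$.

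With this in hand, the direction ``$\Rightarrow$'' of the criterion is immediate, since each $\sum_i v_ig_i$ lies in $\langle G\rangle$. For ``$\Leftarrow$'' the essential preliminary is that $\mathrm{Syz}(\mathrm{lt}(g_1),\dots,\mathrm{lt}(g_m))$ is a graded submodule of $A^m$ once $A^m$ is $\mathbb{N}^n$-graded by placing the $i$-th basis vector in multidegree $\mathrm{DEG}(g_i)$: the map $e_i\mapsto\mathrm{lt}(g_i)$ is then degree-preserving, so $A^m$ being Noetherian lets us take $B$ to consist of homogeneous syzygies, each component a single term. Given $f\in\langle G\rangle$, choose a representation $f=\sum_i a_ig_i$ minimising $\delta:=\max_i\mathrm{DEG}(a_ig_i)$, which is possible because $<$ is a well-order. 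If $\delta=\mathrm{DEG}(f)$, then the multidegree-$\delta$ part of $f$ is $\sum_{i\in S}\mathrm{lt}(a_i)\mathrm{lt}(g_i)=\mathrm{lt}(f)$, where $S$ indexes the summands attaining $\delta$, and this exhibits a one-step reduction of $f$ lowering $\mathrm{DEG}$; induction on $\mathrm{DEG}(f)$ along the well-order then finishes. If $\delta>\mathrm{DEG}(f)$, the multidegree-$\delta$ part of $f$ vanishes, so $(\mathrm{lt}(a_i))_{i\in S}$ extended by zero is a homogeneous syzygy; writing it as $\sum_{b\in B}q_b\,b$ with $q_b$ a term and $\mathrm{DEG}(q_b)+\deg(b)=\delta$, and using the hypothesis $\sum_i b_ig_i\xrightarrow{G}_+0$ to get $\sum_i b_ig_i=\sum_i a_i^{(b)}g_i$ with $\mathrm{DEG}(a_i^{(b)}g_i)\le\mathrm{DEG}\bigl(\sum_i b_ig_i\bigr)<\deg(b)$, I would subtract $\sum_b q_b\bigl(\sum_i b_ig_i\bigr)=\sum_{i\in S}\mathrm{lt}(a_i)g_i$ from $\sum_i a_ig_i$. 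This produces a new representation of $f$ whose coefficients have strictly smaller $\max_i\mathrm{DEG}(\,\cdot\,g_i)$, contradicting the minimality of $\delta$.

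The main obstacle is exactly the degree bookkeeping in this last case: one has to verify that the substitution genuinely lowers $\max_i\mathrm{DEG}(a_ig_i)$, and this hinges on two facts that must be set up carefully beforehand --- the homogeneity of the generators in $B$, which is what forces the spurious multidegree-$\delta$ contributions to cancel, and the degree estimate for reductions to zero, namely that each reduction step subtracts from $p$ a term multiple of some $g_j$ with $\mathrm{DEG}$ no larger than $\mathrm{DEG}(p)$, so that $p\xrightarrow{G}_+0$ yields $p=\sum_i a_ig_i$ with $\max_i\mathrm{DEG}(a_ig_i)\le\mathrm{DEG}(p)$. Everything else --- termination of the various reduction processes, and the Noetherian inductions --- is controlled uniformly by the well-ordering of the term order.
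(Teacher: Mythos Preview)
The paper does not prove this theorem at all; it is quoted from the cited references of Rutman and Lezama, so there is no paper proof to compare against. Your argument is the standard one and is essentially sound: the existence part via the Noetherian property of $A$ and the well-ordering of $<$ is correct, the characterisation ``$G$ is a Groebner basis iff every element of $\langle G\rangle$ reduces to $0$'' is set up properly, and the minimal-$\delta$ argument for the backward implication is the right strategy.

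There is, however, one genuine gap. The theorem as stated lets $B$ be an \emph{arbitrary} finite generating set of the syzygy module, whereas your sentence ``$A^m$ being Noetherian lets us take $B$ to consist of homogeneous syzygies'' quietly replaces the given $B$ by a homogeneous one. For the backward direction this matters: when you write the homogeneous degree-$\delta$ syzygy $s$ as $\sum_{b\in B}q_b\,b$ with $b$ not homogeneous, there is no reason for the individual summands to satisfy $\mathrm{DEG}(q_b)+d(b)\le\delta$ (cancellation among the $q_b b$ can push individual degrees above $\delta$), so your substitution need not lower $\max_i\mathrm{DEG}(a_ig_i)$. Your degree bookkeeping in the last paragraph is exactly right when each $b$ is homogeneous, but that hypothesis is not available. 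To close the gap you would have to argue, for instance by taking homogeneous components of the identity $s=\sum_b q_b b$ and inducting on the maximal degree appearing among the $b\in B$, that the reduction hypothesis for $B$ forces the same conclusion for a homogeneous generating set; alternatively, one restricts the statement to homogeneous $B$, which is how the criterion is typically applied (and is all that is used later in the paper).
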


We end this subsection with the following definition.

\begin{definition}
Let $A$ be as above, $I$ an ideal in $A$ and $a_1,\dots,a_l\in A$ non-zero divisors. Then, we define the \textit{saturation of} $I$ \textrm{with respect to} $a_1,\dots,a_l$ by
\begin{equation}
\mathrm{sat}(I,a_1,\dots,a_l)=I_{(S)}\cap A=\{f\in A\mid\textrm{there exists}\,\underline{m}\in\mathbb{N}^l\,\textrm{with}\,\prod a_i^{m_i}f\in I\},
\end{equation}
where $S$ is the multiplicative closure of $a_1,\dots,a_l$ and $I_{(S)}$ the localisation of $I$ with respect to $S$.\\
It is well-known that for formal variable $y_1,\dots,y_l$, we have
\begin{equation}
\mathrm{sat}(I,a_1,\dots,a_l)=\langle I,1-y_1a_1,\dots,1-y_la_l\rangle\cap A.
\end{equation}
Moreover, let $G$ be a Groebner basis of $\langle I,1-y_1a_1,\dots,1-y_la_l\rangle$ with respect to a lexicographic ordering, where $y_1>\dots,y_l>x_1>\dots>x_n$, then $G\cap A$ is a Groebner basis of $\mathrm{sat}(I,a_1,\dots,a_l)$. In particular, we have that $G\cap A$ is a generating set.
\end{definition}

\section{The main conjecture on Mustafin varieties}
In this section, we formulate our main conjecture which predicts a certain type of decomposition of the special fibre for a family of Mustafin varieties. As input data
we fix $d\in\mathbb{Z}_{>0}$, a tuple $\underline{n}=(n_1,\dots,n_{d-1})\in\mathbb{Z}_{>0}^{d-1}$ with $n_1<\dots<n_{d-1}\in\mathbb{Z}_{>0}$, a positive integer $n$ and 
\begin{equation}
\underline{a}\coloneqq\left(a_{ij}^{(l)}\right)_{\substack{i,j=1,\dots,d\\l=0,\dots,n}}\in \mathcal{O}_K^{d^2\cdot (n+1)}.
\end{equation}
For $l=0,\dots,n$, we define
\begin{equation}
M_l=
\begin{pmatrix}
a_{11}^{(l)} & \hdots & a_{1d}^{(l)}\\
\vdots & \ddots & \vdots\\
a_{d1}^{(l)} & \hdots & a_{dd}^{(l)}
\end{pmatrix}
\end{equation}
and
\begin{equation}
\label{equ:matrices}
g_l=M_l\begin{pmatrix}
1\\
& \pi^{n_1}\\
 & & \pi^{n_2}\\
& & & \ddots\\
& & & & \pi^{n_{d-1}} 
\end{pmatrix}
\end{equation}
This yields the lattices $L_l=g_lL$ for $l=0,\dots,n$ and -- denoting $\underline{n}=\{n_1,\dots,n_{d-1}\}$ --  the set $\Gamma_{\underline{a},\underline{n}}\coloneqq\{[L_0],\dots,[L_{n}]\}$.\\
Let $v\in\mathbb{Z}_{\ge0}^{n+1}$, such that $0\le v_i\le d-1$ and $\sum v_i=n(d-1)$. We consider the ideal
\begin{equation}
I_v=\langle \left((x_{ij})_{\substack{j=0,\dots,n}}\right)_{i=1,\dots,v_j}\rangle,
\end{equation}
where if $v_i=0$, we omit the monomials $x_{ij}$.

\begin{definition}
\label{rem:general}
We say that a condition holds for general elements $\underline{a}\in \mathcal{O}_K^{d^2\cdot (n+1)}$, if it holds for all elements in the preimage of  a non-empty Zariski open subset in $\mathbb{A}_{k}^{d^2\cdot (n+1)}$ under the reduction map. In particular, a condition holding for general elements is generically true in $\mathcal{O}_K^{d^2\cdot (n+1)}$.\par
Moreover, let $U\subset \mathbb{A}_K^{d^2\cdot (n+1)}$ be a non-empty Zariski open subset, then, possibly after replacing $K$ by a finite field extension,  $U(K) \cap \mathcal{O}_K^{d^2\cdot (n+1)}$ contains the preimage of a non-empty Zariski open subset in $k^{d^2\cdot (n+1)}$, i.e. it contains a general subset. 
\end{definition}

We are now ready to state our main conjecture.

\begin{conjecture}
\label{conj-must}
Let $\Gamma_{\underline{a},\underline{n}}$ be as above and $\mathcal{M}(\Gamma_{\underline{a},\underline{n}})$ be the corresponding Mustafin variety. Then, there exists a general subset $U_{\underline{n},n}\subset\mathcal{O}_K^{d^2(n+1)}$, such that we have for all $\underline{a}\in U_{\underline{n},n}$ that the ideal of the special fibre decomposes into
\begin{equation}
\label{equ:ideal}
I(\mathcal{M}(\Gamma_{\underline{a},\underline{n}})_k)=\bigcap I_v,
\end{equation}
where the intersection runs over all $v\in\mathbb{Z}_{\ge0}^{n+1}$ with $0\le v_i\le d-1$ and $\sum v_i=n(d-1)$. The primary components correspond to those $v$, such that there exists $i$ with $v_i=0$.
\end{conjecture}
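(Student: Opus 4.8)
The plan is to convert \cref{conj-must} into a statement about Gröbner bases over $\mathcal O_K$ and, after parametrising, over a unique factorisation domain, so that the criteria developed in the following sections apply. Recall from \cref{sec:premust} that $\mathcal M(\Gamma_{\underline a,\underline n})$ is the closed subscheme of $\mathbb P(L_0)\times_{\mathcal O_K}\cdots\times_{\mathcal O_K}\mathbb P(L_n)$ cut out by
\[
J_{\underline a}\;=\;I_2\bigl(g_0\underline x^{(0)}\mid\cdots\mid g_n\underline x^{(n)}\bigr)\cap\mathcal O_K[(x_{ij})],
\]
where $\underline x^{(l)}=(x_{1l},\dots,x_{dl})^{T}$ and $g_l=M_l\,\mathrm{diag}(1,\pi^{n_1},\dots,\pi^{n_{d-1}})$. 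The scheme $\mathcal M(\Gamma_{\underline a,\underline n})$ is integral, being by definition the reduced closure of an image of the irreducible $\mathbb P(V)$, and it dominates $\operatorname{Spec}\mathcal O_K$, hence is $\mathcal O_K$-flat; so the ideal of the special fibre is the image of $J_{\underline a}$ in $k[(x_{ij})]$, and this image is radical by \cite[Theorem 2.3]{cartwright2011mustafin}. As each $I_v$ is generated by a subset of the coordinate variables it is prime, so $\bigcap_v I_v$ is radical too and \eqref{equ:ideal} is at least consistent. The statement about primary components is then forced once the decomposition is known: every admissible $v$ satisfies $\dim V(I_v)=\sum_{j=0}^n(d-1-v_j)=d-1$, while the projection of $V(I_v)$ to the $i$-th factor has all fibres of dimension $v_i$ and image $\mathbb P^{d-1-v_i}$, so it maps $V(I_v)$ birationally onto $\mathbb P(L_i)_k$ exactly when $v_i=0$; by \cite[Corollary 2.5]{cartwright2011mustafin} these are precisely the primary components.

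The core of the argument is a parametrised Gröbner computation. First I would adjoin $\pi$ and the $a_{ij}^{(l)}$ as indeterminates and regard $J_{\underline a}$ as a specialisation of an ideal over the polynomial ring in the $x_{ij}$ with coefficients in the unique factorisation domain $S=k[\pi,(a_{ij}^{(l)})]$. By \cref{thm-groe} one produces a Gröbner basis $G$ of this ideal over $S$ for a suitable term order on the $x_{ij}$, and then applies the results of the next section on the behaviour of Gröbner bases under substitution over unique factorisation domains to the substitutions $\pi\mapsto 0$ and $(a_{ij}^{(l)})\mapsto(\bar a_{ij}^{(l)})$. The leading coefficients of the elements of $G$, together with the finitely many polynomial quantities entering Buchberger's criterion, lie in $S$; requiring that none of them vanishes after setting $\pi=0$ defines a non-empty Zariski-open subset of $\mathbb A^{d^2(n+1)}_k$, and $U_{\underline n,n}$ is taken to be its preimage under reduction as in \cref{rem:general}. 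On $U_{\underline n,n}$ the reduction $G\bmod\pi$ is still a Gröbner basis, of the ideal of the special fibre. It then remains to check that $G\bmod\pi$ generates $\bigcap_v I_v$; since the latter is the squarefree monomial ideal whose minimal generators correspond to the minimal transversals of the family of variable supports of the $I_v$, a Gröbner basis of $\bigcap_v I_v$ can be written down explicitly and the comparison becomes a finite combinatorial verification. For $d=2$ this entire scheme is \cite[Lemma 3.2]{HWplane}; for $d=3$ the Gröbner basis $G$ and all the auxiliary non-vanishing conditions can be produced in \textsc{Singular} \cite{DGPS} over the relevant base fields, which is carried out in \cref{sec:first} — the hypothesis $p\gg 0$ over $\mathbb Q_p$ being exactly what guarantees that no leading coefficient of $G$ becomes divisible by the residue characteristic.

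The decisive obstacle, and the reason the assertion is stated as \cref{conj-must} rather than proved, lies in the first step for general $d$: one would need a uniform structural description of the Gröbner basis of the ``$\pi$-weighted generic'' ideal $I_2\bigl(g_0\underline x^{(0)}\mid\cdots\mid g_n\underline x^{(n)}\bigr)$, valid for all $d$ and all $\underline n$, and neither an induction on $d$ nor one on the entries of $\underline n$ is apparent. The conjectured answer — that the relevant initial ideal is the determinantal ``generic initial''-type ideal $\bigcap_v I_v$ — is exactly what ties the problem to \cite[Conjecture 1.1]{conca2007linear} and to the theory of generic initial ideals, as discussed in \cref{rem-genin}.
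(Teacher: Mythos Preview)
The statement is \cref{conj-must}, a \emph{conjecture}; the paper does not prove it in general, and you correctly identify the obstruction in your final paragraph. What the paper does prove are special cases ($d=3$ in \cite[Lemma 3.1]{HWplane}, and $d=4$ with $2n_1<n_2$, $2n_2<n_3$ over $L((\pi))$ in \cref{prop:mustex}), and your outline differs from those proofs in one substantive way.

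The paper's route to the special cases does \emph{not} attempt a full parametric Gr\"obner basis of $J_{\underline a}$ and then compare with $\bigcap_v I_v$. Instead it uses the reduction in \cref{rem-step}: by \cite{conca2007linear} and flatness, $\bigcap_v I_v$ and $I(\mathcal M(\Gamma_{\underline a,\underline n})_k)$ share a Hilbert polynomial, and since both are radical it suffices to prove the single inclusion $\bigcap_v I_v \subset I(\mathcal M(\Gamma_{\underline a,\underline n})_k)$. The proof of \cref{prop:mustex} then constructs, by hand-guided elimination among the $2\times 2$ minors $m_{(\alpha,\beta),(\gamma,\delta)}$, specific elements of $J_{\underline a}$ whose reductions modulo $\pi$ hit each monomial generator of $\bigcap_v I_v$; the ``general'' conditions on $\underline a$ are exactly the non-vanishing of the resulting coefficient polynomials. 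Your proposed scheme would in principle yield both inclusions at once, but the saturation-then-Gr\"obner step over the full parameter ring is strictly harder to execute (indeed the paper reports that even the direct saturation for $d=4$ did not terminate in \textsc{Singular}); the one-inclusion Hilbert-polynomial shortcut is what makes the $d=4$ case tractable, and it is absent from your outline.

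Two minor corrections. Your dimension indices are off by one: the case in \cite{HWplane} is $d=3$ (ambient $\mathbb P^2$), not $d=2$, and \cref{sec:first} treats $d=4$ (ambient $\mathbb P^3$), not $d=3$. Also, working over $S=k[\pi,(a_{ij}^{(l)})]$ is not literally the same as working over $\mathcal O_K[(a_{ij}^{(l)})]$, since $\mathcal O_K$ is generally not $k[\pi]$; the computations in \cref{sec:first} take place over $L[[\pi]]$, and the passage to $\mathbb Q_p$ for $p\gg 0$ is a separate step in \cref{cor:mustex}.
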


\begin{remark}
\label{rem-step}
We note that it was proved in \cite{conca2007linear} that $\bigcap I_v$ has the same Hilbert polynomial as $I(\mathcal{M}(\Gamma_{\underline{a},\underline{n}})_K)$. As $\mathcal{M}(\Gamma_{\underline{a},\underline{n}})$ is a flat scheme, we have that $I(\mathcal{M}(\Gamma_{\underline{a},\underline{n}})_K)$ and $I(\mathcal{M}(\Gamma_{\underline{a},\underline{n}})_k)$ share the same Hilbert polynomial. Therefore, $\bigcap I_v$ and $I(\mathcal{M}(\Gamma_{\underline{a},\underline{n}})_k)$ also share the same Hilbert polynomial.\\
Let $I,J$ be two multi-homogeneous ideals with $J\subset I$ in a polynomial ring over a field. If $J$ and $I$ share the same Hilbert polynomial, then their radical ideals agree. Since $\bigcap I_v$ and $I(\mathcal{M}(\Gamma_{\underline{a},\underline{n}})_k)$ are radical ideals, it is therefore enough to prove that $\bigcap I_v\subset I(\mathcal{M}(\Gamma_{\underline{a},\underline{n}})_k)$ or $\bigcap I_v\subset I(\mathcal{M}(\Gamma_{\underline{a},\underline{n}})_k)$ in order to derive \cref{conj-must}.
\end{remark}

This conjecture is at the core of most of our considerations for the rest of this paper. For $n_1=1,n_2=2$  and $d=3$ it is proved in \cite[lemma 3.1]{HWplane}, although the proof generalises immediately to arbitrary $1<n_1<n_2$. In \cref{sec:first}, we prove \cref{conj-must} for $d=4$, $2n_1<n_2$, $2n_2<n_3$ and $K=L((\pi))$, where $\pi$ is a formal variable and $L$ an infinite field of characteristic $0$.

\subsection{A proof of \cref{conj-must} for $d=4$ and $2n_1<n_2$ and $2n_2<n_3$}
\label{sec:first}
We now prove \cref{conj-must} for $K=\mathbb{L}(\pi)$,  where $L$ is an infinite field, $d=4$ and $2n_1<n_2$ and $2n_2<n_3$. Our first goal is to show the following proposition.

\begin{proposition}
\label{prop:mustex}
In the setting of \cref{conj-must}, let $d=4$, $\underline{n}$, such that $2n_1<n_2$, $2n_2<n_3$ and $n=3$. Furthermore, we set $K=L((\pi))$, where $\pi$ is a formal variable and $L$ an infinite field with $\mathrm{char}(L)=0$. Then \cref{conj-must} holds.
\end{proposition}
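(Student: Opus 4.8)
The plan is to verify the decomposition \eqref{equ:ideal} for general $\underline{a}$ by an explicit Groebner basis computation, carried out over $L[\pi]$, after reducing the problem to a single inclusion and normalising the matrices $g_l$. First, by \cref{rem-step} it suffices to establish one inclusion between the radical ideals $\bigcap I_v$ and $I(\mathcal{M}(\Gamma_{\underline{a},\underline{n}})_k)$ --- say $\bigcap I_v\subseteq I(\mathcal{M}(\Gamma_{\underline{a},\underline{n}})_k)$ --- since both have the same Hilbert polynomial; geometrically this amounts to exhibiting each linear stratum $V(I_v)=\prod_{j}\mathbb{P}^{\,d-1-v_j}$ as an irreducible component of the special fibre, all of these and $\mathcal{M}(\Gamma_{\underline{a},\underline{n}})_k$ being equidimensional of dimension $d-1=3$ by flatness and by \cite{conca2007linear}. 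Second, since the ideal \eqref{equ-idmust} is extended from $L[\pi][(x_{ij})]$, I may replace $\mathcal{O}_K=L[[\pi]]$ by the localisation of $L[\pi]$ at $(\pi)$ and perform all computations there; in particular $\pi$ becomes a formal parameter and passage to the special fibre is the substitution $\pi\mapsto 0$.

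Third, the normalisation. Using the common left action of $\mathrm{GL}_d(\mathcal{O}_K)$ together with the per-factor right action of $\mathrm{GL}_d(\mathcal{O}_K)$ on the tuple $(g_l)$ discussed in \cref{sec:premust}, neither of which changes $\mathcal{M}(\Gamma_{\underline{a},\underline{n}})$, I would bring $g_0$ to the diagonal matrix $D=\mathrm{diag}(1,\pi^{n_1},\pi^{n_2},\pi^{n_3})$ and write $g_l=M_lD$ with $M_l$ a general invertible matrix for $l=1,2,3$; a few further right multiplications clear additional entries of the $M_l$, leaving a bounded number of free parameters. One then treats these entries as indeterminates over $L$ (equivalently, works over $L(\underline{a})$), which is permissible because $L$ is infinite of characteristic zero: the set of $\underline{a}$ for which the generic computation specialises faithfully is the complement of the vanishing locus of the finitely many leading coefficients that occur in it, hence contains the preimage of a nonempty Zariski open subset of $\mathbb{A}^{d^2(n+1)}_k$, and this open set is the general subset $U_{\underline{n},n}$ of \cref{conj-must}.

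The core is then the computation itself: (i) form the $4\times4$ matrix in \eqref{equ-idmust} of linear forms in the $16$ variables $x_{ij}$ and compute the ideal $J$ of its $2\times2$ minors over $L(\underline{a})[\pi][(x_{ij})]$; (ii) saturate $J$ with respect to $\pi$, via the standard elimination description of saturation, to obtain the ideal $I_{\mathcal{O}_K}$ of the flat model; (iii) compute a Groebner basis $G$ of $I_{\mathcal{O}_K}$ for a term order in which $\pi$ is minimal --- this is where the separation hypotheses $2n_1<n_2$ and $2n_2<n_3$ are used: they force the $\pi$-adic valuations of the coefficients appearing in the reductions of the $S$-polynomials of \cref{thm-groe} to take their generic values, so that no unexpected cancellation occurs and the image of $G$ under $\pi\mapsto 0$ is again a Groebner basis, now of $I(\mathcal{M}(\Gamma_{\underline{a},\underline{n}})_k)$; (iv) compare this reduced ideal with $\bigcap I_v$, which can be formed directly from the monomials $x_{ij}$, a finite ideal-membership check that by \cref{rem-step} need only be verified in one direction.

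The hard part is step (iii): controlling the Groebner basis simultaneously under the $\pi$-saturation and under the specialisation $\pi\mapsto 0$. Over a field this is the province of comprehensive Groebner bases, but over $L(\underline{a})[\pi]_{(\pi)}$ one must instead invoke the criteria for Groebner bases over unique factorisation domains that are the main technical tool of this paper, in order to guarantee that a leading coefficient which is a unit before specialisation stays a unit afterwards; this is exactly the point at which the conditions on $\underline{n}$ and the genericity of $\underline{a}$ are consumed. A secondary obstacle is size: even for $d=4$, $n=3$ one computes with a $4\times4$ matrix, $16$ variables $x_{ij}$, the formal parameter $\pi$ and several generic parameters, so the computation must be organised --- exploiting the $(n+1)$-fold multigrading and the symmetry among the factors --- to terminate in practice, and it is this organisation rather than a new idea that carries the proof.
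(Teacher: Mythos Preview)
Your outline matches the paper on the high-level strategy: use \cref{rem-step} to reduce to the single inclusion $\bigcap I_v\subseteq I(\mathcal{M}(\Gamma_{\underline{a},\underline{n}})_k)$, work over $L[\pi]$ with the matrix entries treated as generic parameters, and verify that inclusion by explicit computation. The divergence is in the computation itself, and that is where the actual content of the proof lies.

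The route you describe in steps (i)--(iii) --- form the $2\times2$ minors, saturate by $\pi$ via elimination, take a Groebner basis with $\pi$ small, reduce modulo $\pi$ --- is precisely what the paper reports to be infeasible: ``While in principle such a computation could be carried out by a computer algebra system, our computations did not finish.'' The paper therefore abandons the Groebner-basis approach and instead constructs by hand, for each family of generators of $\bigcap I_v$ listed in \cref{equ-exfib}, an explicit element of the saturated ideal whose reduction modulo $\pi$ is that generator. Concretely, starting from the minors $m_{(\alpha,\beta),(\gamma,\delta)}$ the paper writes down a short tower of $\mathcal{O}_K$-linear combinations $m^{(1)},m^{(2)},m^{(3)},m^{(4)},m_{(\alpha,\beta,\gamma)},m_{(\alpha,\beta,\gamma,\delta)}$, each step chosen so as to cancel the term of lowest $\pi$-adic order surviving from the previous one; after dividing by the resulting power of $\pi$ --- and this is exactly where $2n_1<n_2$ and $2n_2<n_3$ are consumed, guaranteeing the quotient stays in $\mathcal{O}_K[(x_{ij})]$ --- and reducing modulo $\pi$, one reads off in turn the monomial generators $x_{1i}x_{1j}$, $x_{1i}x_{2j}$, $x_{2i}x_{2j}$, $x_{1i}x_{3j}$, $x_{2i}x_{3j}x_{3l}$, $x_{30}x_{31}x_{32}x_{33}$. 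No normalisation of $g_0$ is used, and the full ideal of the special fibre is never computed --- only enough elements of it to generate $\bigcap I_v$.

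Your closing paragraph is honest: the proof is indeed carried by ``organisation rather than a new idea''. But that organisation \emph{is} the substance of the paper's argument, it is not the Groebner-basis-plus-multigrading bookkeeping you sketch, and it is absent from your proposal. As written, your plan defers the one step that actually needs to be done, and proposes for it a method the paper explicitly tried and found not to terminate.
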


Our proof of \cref{prop:mustex} relies on computations using the computer algebra system \textsc{Singular}.
\begin{proof}
In the setting of \cref{conj-must}, we set $d=4$, $n=3$ and $\underline{n}=(n_1,n_2,n_3)$ with $0<n_1<n_2<n_3$, such that $2n_1<n_2$ and $2n_2<n_3$. Furthermore, we set $K=L((\pi))$, where we set $\pi$ a formal variable. We see that $\mathcal{O}_K=L[[\pi]]$ and $k\cong L$.\\
Then, for given $\underline{a}$, we have have that $I(\mathcal{M}(\Gamma_{\underline{a},\underline{n}})_K)$ is generated by the $2\times2$ minors of

\begin{equation}
\begin{pmatrix}
a_{11}^{(0)}x_{10}+a_{12}^{(0)}\pi x_{20}+a_{13}^{(0)}\pi^3x_{30}+a_{14}^{(0)}\pi^7x_{40} & \hdots
 & a_{11}^{(3)}x_{13}+a_{12}^{(3)}\pi x_{23}+a_{13}^{(3)}\pi^3x_{33}+a_{14}^{(3)}\pi^7x_{43}\\
a_{21}^{(1)}x_{10}+a_{22}^{(0)}\pi x_{20}+a_{23}^{(0)}\pi^3x_{30}+a_{24}^{(0)}\pi^7x_{40} & \hdots &
a_{21}^{(3)}x_{13}+a_{22}^{(3)}\pi x_{23}+a_{23}^{(3)}\pi^3x_{33}+a_{24}^{(3)}\pi^7x_{43} \\
a_{31}^{(0)}x_{10}+a_{32}^{(0)}\pi x_{20}+a_{33}^{(0)}\pi^3x_{30}+a_{34}^{(0)}\pi^7x_{40} & \hdots &
a_{31}^{(3)}x_{13}+a_{32}^{(3)}\pi x_{23}+a_{33}^{(3)}\pi^3x_{33}+a_{34}^{(3)}\pi^7x_{43} \\
a_{41}^{(0)}x_{10}+a_{42}^{(0)}\pi x_{20}+a_{43}^{(0)}\pi^3x_{30}+a_{44}^{(0)}\pi^7x_{40} & \hdots &
a_{41}^{(3)}x_{13}+a_{42}^{(3)}\pi x_{23}+a_{43}^{(3)}\pi^3x_{33}+a_{44}^{(3)}\pi^7x_{43}
\end{pmatrix}
\end{equation}

We denote by $m_{(\alpha,\beta),(\gamma,\delta)}$ the $2\times2$ minor given by the $\alpha-$th and $\beta-$th column and the $\gamma-$th and $\delta-$th row. We observe that
\begin{equation}
I(\mathcal{M}(\Gamma_{\underline{a},\underline{n}}))=\mathrm{sat}\left(\left<m_{(\alpha,\beta),(\gamma,\delta)}\right>_{\mathcal{O}_K}, \pi\right),
\end{equation}
i.e. the ideal of the Mustafin variety over $\mathcal{O}_K$ is given by the saturation of the ideal generated by the $2\times2$ minors with respect to the uniformiser $\pi$. While in principle such a computation could be carried out by a computer algebra system, our computations did not finish. Therefore, we carry out the computation by hand. Our computations were aided by \textsc{Singular} and may be found in \url{https://sites.google.com/site/marvinanashahn/computer-algebra}.\\
A straightforward computation shows that
\begin{align}
\label{equ-exfib}
\begin{split}
\bigcap I_v=&\left<\left(x_{1i}x_{1j}\right)_{\substack{i,j=0,\dots,3\\i<j}},\left(x_{2i}x_{2j}\right)_{\substack{i,j=0,\dots,3\\i<j}},\left(x_{1i}x_{2j}\right)_{\substack{i,j=0,\dots,3\\i\neq j}},\left(x_{1i}x_{3j}\right)_{\substack{i,j=0,\dots,3\\i\neq j}},\right.\\
&\left. \left(x_{2i}x_{3j}x_{3l}\right)_{\substack{i,j,l=0,\dots,3\\i,j,l\,\textrm{p.w. different}}},x_{30}x_{31}x_{32}x_{33}\right>_{k[x_{ij}]},
\end{split}
\end{align}

where the intersection runs over all $v\in\mathbb{Z}_{\ge0}^4$, $0\le v_i\le 4$ and $\sum v_i=n(d-1)=9$. In order to prove that

\begin{equation}
\label{equ-proofst}
I(\mathcal{M}(\Gamma_{\underline{a},\underline{n}})_k)=\bigcap I_v
\end{equation}

we successively combine the minors $m_{(\alpha,\beta),(\gamma,\delta)}$ to find polynomials over $\mathcal{O}_K$ whose reductions yields the generators of $\bigcap I_v$ given in \cref{equ-exfib}.

To begin with, let $\alpha,\beta,\gamma,\delta\in\{0,\dots,3\}$, such that $\alpha\neq\beta,\gamma\neq\delta$. We observe that
\begin{align}
m_{(\alpha,\beta),(\gamma,\delta)}=&\star\pi^{2n_3}x_{4\alpha}x_{4\beta}+\star\pi^{n_2+n_3}x_{3\alpha}x_{4\beta}+\star\pi^{n_2+n_3}x_{4\alpha}x_{3\beta}+\star\pi^{n_1+n_2}x_{2\alpha}x_{4\beta}+\star\pi^{n_1+n_2}x_{4\alpha}x_{2\beta}\\
&+\star\pi^{n_3}x_{1\alpha}x_{4\beta}+\star\pi^{n_3}x_{4\alpha}x_{1\beta}+\star\pi^{2n_2}x_{3\alpha}x_{3\beta}+\star\pi^{n_1+n_2}x_{2\alpha}x_{3\beta}+\star\pi^{n_1+n_2}x_{3\alpha}x_{2\beta}\\
&+\star\pi^{n_2}x_{1\alpha}x_{3\beta}+\star\pi^{n_2}x_{3\alpha}x_{1\beta}+\star\pi^{2n_1}x_{2\alpha}x_{2\beta}+\star\pi^{n_1} x_{1\alpha}x_{2\beta}+\star\pi^{n_1} x_{2\alpha}x_{1\beta}+\star x_{1\alpha}x_{1\beta}
\end{align}
where $\star$ indicate the coefficients, which are non-zero polynomials in the $a_{ij}^{(l)}$, which depend on $\alpha,\beta,\gamma,\delta$ and are independent of $\pi$. Thus, for general coefficients, we have $\star\in \mathcal{O}_K^{\ast}$. We see immediately, that $(m_{(\alpha,\beta),(\gamma,\delta)}\,\mathrm{mod}\, \pi)\in k[x_{ij}]$ generate $\langle (x_{1i}x_{1j})_{\substack{i,j=0,\dots,3\\i<j}}\rangle_{k[x_{ij}]}$.\\
In order to proceed with our computation, we need to introduce some notation. Let $f$ be a polynomial over $\mathcal{O}_K$ and $m$ a monomial in $x_{ij}, \pi$. Then, we denote by $\mathrm{coeff}(f, m)$ the coefficient of $m$ in $f$.\\
As a first step, we define

\begin{align}
m^{(1)}_{1,(\alpha,\beta)}&=\mathrm{coeff}(m_{(\alpha,\beta),(1,3)},x_{1\alpha}x_{1\beta})m_{(\alpha,\beta),(1,2)}-\mathrm{coeff}(m_{(\alpha,\beta),(1,2)},x_{1\alpha}x_{1\beta})m_{(\alpha,\beta),(1,3)}\\
m^{(1)}_{2,(\alpha,\beta)}&=\mathrm{coeff}(m_{(\alpha,\beta),(2,3)},x_{1\alpha}x_{1\beta})m_{(\alpha,\beta),(1,2)}-\mathrm{coeff}(m_{(\alpha,\beta),(1,2)},x_{1\alpha}x_{1\beta})m_{(\alpha,\beta),(2,3)}\\
m^{(1)}_{3,(\alpha,\beta)}&=\mathrm{coeff}(m_{(\alpha,\beta),(1,4)},x_{1\alpha}x_{1\beta})m_{(\alpha,\beta),(1,3)}-\mathrm{coeff}(m_{(\alpha,\beta),(1,3)},x_{1\alpha}x_{1\beta})m_{(\alpha,\beta),(1,4)}\\
m^{(1)}_{4,(\alpha,\beta)}&=\mathrm{coeff}(m_{(\alpha,\beta),(3,4)},x_{1\alpha}x_{1\beta})m_{(\alpha,\beta),(1,3)}-\mathrm{coeff}(m_{(\alpha,\beta),(1,3)},x_{1\alpha}x_{1\beta})m_{(\alpha,\beta),(3,4)}\\
m^{(1)}_{5,(\alpha,\beta)}&=\mathrm{coeff}(m_{(\alpha,\beta),(1,4)},x_{1\alpha}x_{1\beta})m_{(\alpha,\beta),(1,2)}-\mathrm{coeff}(m_{(\alpha,\beta),(1,2)},x_{1\alpha}x_{1\beta})m_{(\alpha,\beta),(1,4)}\\
m^{(1)}_{6,(\alpha,\beta)}&=\mathrm{coeff}(m_{(\alpha,\beta),(2,4)},x_{1\alpha}x_{1\beta})m_{(\alpha,\beta),(1,4)}-\mathrm{coeff}(m_{(\alpha,\beta),(1,4)},x_{1\alpha}x_{1\beta})m_{(\alpha,\beta),(2,4)}.
\end{align}

A \textsc{Singular} computation shows that

\begin{align}
m_{j,(\alpha,\beta)}^{(1)}=&\star\pi^{2n_3}x_{4\alpha}x_{4\beta}+\star\pi^{n_2+n_3}x_{3\alpha}x_{4\beta}+\star\pi^{n_2+n_3}x_{4\alpha}x_{3\beta}+\star\pi^{n_1+n_2}x_{2\alpha}x_{4\beta}+\star\pi^{n_1+n_2}x_{4\alpha}x_{2\beta}\\
&+\star\pi^{n_3}x_{1\alpha}x_{4\beta}+\star\pi^{n_3}x_{4\alpha}x_{1\beta}+\star\pi^{2n_2}x_{3\alpha}x_{3\beta}+\star\pi^{n_1+n_2}x_{2\alpha}x_{3\beta}+\star\pi^{n_1+n_2}x_{3\alpha}x_{2\beta}\\
&+\star\pi^{n_2}x_{1\alpha}x_{3\beta}+\star\pi^{n_2}x_{3\alpha}x_{1\beta}+\star\pi^{2n_1}x_{2\alpha}x_{2\beta}+\star\pi^{n_1} x_{1\alpha}x_{2\beta}+\star\pi^{n_1} x_{2\alpha}x_{1\beta},
\end{align}

where $\star$ indicate non-zero polynomials in the $\underline{a}$, which depend on $j$ and that are independent of $\pi$.  Furthermore, we set

\begin{align}
&m_{1,(\alpha,\beta)}^{(2)}=a_{21}^{(\alpha)}m_1^{(1)}-a_{11}^{(\alpha)}m_2^{(1)},\quad m_{2,(\alpha,\beta)}^{(2)}=a_{31}^{(\alpha)}m_3^{(1)}-a_{11}^{(\alpha}m_4^{(1)}\\
&m_{3,(\alpha,\beta)}^{(2)}=a_{41}^{(\alpha)}m_5^{(1)}-a_{11}^{(\alpha)}m_6^{(1)}.
\end{align}

Again, a Singular computation shows that
\begin{align}
m_{j,(\alpha,\beta)}^{(2)}=&\star\pi^{2n_3}x_{4\alpha}x_{4\beta}+\star\pi^{n_2+n_3}x_{3\alpha}x_{4\beta}+\star\pi^{n_2+n_3}x_{4\alpha}x_{3\beta}+\star\pi^{n_1+n_2}x_{2\alpha}x_{4\beta}+\star\pi^{n_1+n_2}x_{4\alpha}x_{2\beta}\\
&+\star\pi^{n_3}x_{4\alpha}x_{1\beta}+\star\pi^{2n_2}x_{3\alpha}x_{3\beta}+\star\pi^{n_1+n_2}x_{2\alpha}x_{3\beta}+\star\pi^{n_1+n_2}x_{3\alpha}x_{2\beta}+\star\pi^{n_2}x_{3\alpha}x_{1\beta}\\
&+\star\pi^{2n_1}x_{2\alpha}x_{2\beta}+\star\pi^{n_1} x_{2\alpha}x_{1\beta},
\end{align}
where again $\star$ indicate non-zero polynomials in the $\underline{a}$, which depend on $j$ and are independent of $\pi$. Thus, for general coefficients, we have $\star\in \mathcal{O}_K^\ast$. We see that $\pi^{-n_1}m_{l,(\alpha,\beta)}^{(2)}\in \mathcal{O}_K[x_{ij}]$ and that the $\pi^{-n_1}m_{l,(\alpha,\beta)}^{(2)}\,\mathrm{mod}\,\pi\in k[x_{ij}]$ generate $\langle(x_{1i}x_{2j})\rangle_{\substack{i,j=0,\dots,3\\i\neq j}}$.\\
Next, we define
\begin{align}
m_{j,\alpha,\beta}^{(3)}=m_{j+1,\alpha,\beta}^{(2)}-m_{1,\alpha,\beta}^{(2)}
\end{align}
for $j=1,2$. A Singular computation shows that
\begin{align}
m_{j,(\alpha,\beta)}^{(3)}=&\star\pi^{2n_3}x_{4\alpha}x_{4\beta}+\star\pi^{n_2+n_3}x_{3\alpha}x_{4\beta}+\star\pi^{n_2+n_3}x_{4\alpha}x_{3\beta}+\star\pi^{n_1+n_2}x_{2\alpha}x_{4\beta}+\star\pi^{n_1+n_2}x_{4\alpha}x_{2\beta}\\
&+\star\pi^{n_3}x_{4\alpha}x_{1\beta}+\star\pi^{2n_2}x_{3\alpha}x_{3\beta}+\star\pi^{n_1+n_2}x_{2\alpha}x_{3\beta}+\star\pi^{n_1+n_2}x_{3\alpha}x_{2\beta}+\star\pi^{n_2}x_{3\alpha}x_{1\beta}\\
&+\star\pi^{2n_1}x_{2\alpha}x_{2\beta},
\end{align}
where again $\star$ indicate non-zero polynomials in the $\underline{a}$, which depend on $j$ and are independent of $\pi$. Thus, for general coefficients, we have $\star\in \mathcal{O}_K^\ast$. We see that the $\pi^{-2n_1}m_{l,(\alpha,\beta)}^{(3)}\in \mathcal{O}_K[x_{ij}]$ and that the $\pi^{-2n_1}m_{l,(\alpha,\beta)}^{(3)}\,\mathrm{mod}\,\pi\in k[x_{ij}]$ generate $\langle(x_{2i}x_{2j})\rangle_{\substack{i,j=0,\dots,3\\i< j}}$.\\
Now, we define
\begin{align}
m_{\alpha,\beta}^{(4)}=&(-a_{12}^{(\alpha)}a_{21}^{(\alpha)}a_{41}^{(\alpha)}a_{11}^{(\beta)}+a^{(\alpha)}_{11}a^{(\alpha)}_{22}a^{(\alpha)}_{41}a^{(\beta)}_{11}+a^{(\alpha)}_{11}a^{(\alpha)}_{12}a_{21}^{(\alpha)}a^{(\beta)}_{41}-(a^{(\alpha)}_{11})^2a^{(\alpha)}_{22}a^{(\beta)}_{41})m_{1,(\alpha,\beta)}^{(3)}\\
&-(-a^{(\alpha)}_{12}(a^{(\alpha)}_{31})^2a^{(\beta)}_{11}+a^{(\alpha)}_{11}a^{(\alpha)}_{31}a^{(\alpha)}_{32}a^{(\beta)}_{11}+a^{(\alpha)}_{11}a^{(\alpha)}_{12}a^{(\alpha)}_{31}a^{(\beta)}_{31}-(a^{(\alpha)}_{11})^2a^{(\alpha)}_{32}a^{(\beta)}_{31})m_{2,(\alpha,\beta)}^{(3)}.
\end{align}

A Singular computation shows that
\begin{align}
m_{(\alpha,\beta)}^{(4)}=&\star\pi^{2n_3}x_{4\alpha}x_{4\beta}+\star\pi^{n_2+n_3}x_{3\alpha}x_{4\beta}+\star\pi^{n_2+n_3}x_{4\alpha}x_{3\beta}+\star\pi^{n_1+n_2}x_{4\alpha}x_{2\beta}\\
&+\star\pi^{n_3}x_{4\alpha}x_{1\beta}+\star\pi^{2n_2}x_{3\alpha}x_{3\beta}+\star\pi^{n_1+n_2}x_{3\alpha}x_{2\beta}+\star\pi^{n_2}x_{3\alpha}x_{1\beta},
\end{align}

where again $\star$ indicate non-zero polynomials in the $\underline{a}$, which are independent of $\pi$. Thus, for general coefficients, we have $\star\in \mathcal{O}_K^\ast$. We see that the $\pi^{-n_2}m_{(\alpha,\beta)}^{(4)}\,\mathrm{mod}\,\pi\in k[x_{ij}]$ generate $\langle(x_{1i}x_{3j})\rangle_{\substack{i,j=0,\dots,3\\i\neq j}}$.
Now, let $\gamma\in\{0,\dots,3\}\backslash\{\alpha,\beta\}$ and set

\begin{align}
m_{(\alpha,\beta,\gamma)}&=\mathrm{coeff}(m_{\alpha,\beta}^{(4)},\pi^3x_{1\beta}x_{3\alpha})x_{3\alpha}m_{(\gamma,\beta)}^{(4)}-\mathrm{coeff}(m_{\gamma,\beta}^{(4)},\pi^3x_{1\beta}x_{3\gamma})x_{3\gamma}m_{(\alpha,\beta)}^{(4)}.
\end{align}

We compute in Singular, that

\begin{align}
m_{(\alpha,\beta,\gamma)}=&\star\pi^{2n_3}x_{4\alpha}x_{4\beta}x_{3\gamma}
+\star\pi^{2n_3}x_{3\alpha}x_{4\beta}x_{4\gamma}
+\star\pi^{n_2+n_3}x_{3\alpha}x_{4\beta}x_{3\gamma}
+\star\pi^{n_2+n_3}x_{3\alpha}x_{4\beta}x_{3\gamma}\\
&+\star\pi^{n_2+n_3}x_{4\alpha}x_{3\beta}x_{3\gamma}+\star\pi^{n_2+n_3}x_{3\alpha}x_{3\beta}x_{4\gamma}
+\star\pi^{n_1+n_2}x_{4\alpha}x_{2\beta}x_{3\gamma}
+\star\pi^{n_1+n_2}x_{3\alpha}x_{2\beta}x_{4\gamma}\\
&+\star\pi^{n_3}x_{4\alpha}x_{1\beta}x_{3\gamma}
+\star\pi^{n_3}x_{3\alpha}x_{1\beta}x_{4\gamma}
+\star\pi^{2n_2}x_{3\alpha}x_{3\beta}x_{3\gamma}\\
&+\star\pi^{n_1+n_2}x_{3\alpha}x_{2\beta}x_{3\gamma},
\end{align}
where again $\star$ indicate non-zero polynomials in the $\underline{a}$, which are independent of $\pi$. Thus, for general coefficients, we have $\star\in \mathcal{O}_K^\ast$. We see that $\pi^{-n_1-n_2}m_{(\alpha,\beta,\gamma)}\in \mathcal{O}_K[x_{ij}]$ and that
the $m_{(\alpha,\beta,\gamma)}\,\mathrm{mod}\,\pi\in k[x_{ij}]$ generate $\langle(x_{2i}x_{3j}x_{3l})\rangle_{\substack{i,j,l=0,\dots,3\\i,j,l\,\textrm{p.w. different}}}$.\\
Finally, let $\{\alpha,\beta,\gamma,\delta\}=\{0,\dots,3\}$ and define

\begin{align}
m_{(\alpha,\beta,\gamma,\delta)}=\mathrm{coeff}(m_{\alpha,\beta,\gamma},\pi^{n_1+n_2}x_{3\alpha}x_{2\beta}x_{3\gamma})x_{3\gamma}m_{\alpha,\beta,\delta}-\mathrm{coeff}(m_{\alpha,\beta,\delta},\pi^{n_1+n_2}x_{3\alpha}x_{2\beta}x_{3\delta})x_{3\delta}m_{\alpha,\beta,\gamma}.
\end{align}

We see that

\begin{align}
m_{(\alpha,\beta,\gamma,\delta)}=&\star\pi^{2n_3}x_{4\alpha}x_{4\beta}x_{3\gamma}x_{3\delta}
+\star\pi^{2n_3}x_{3\alpha}x_{4\beta}x_{4\gamma}x_{3\delta}
+\star\pi^{2n_3}x_{3\alpha}x_{4\beta}x_{3\gamma}x_{4\delta}\\
&+\star\pi^{n_2+n_3}x_{3\alpha}x_{4\beta}x_{3\gamma}x_{3\delta}
+\star\pi^{n_2+n_3}x_{4\alpha}x_{3\beta}x_{3\gamma}x_{3\delta}
+\star\pi^{n_2+n_3}x_{3\alpha}x_{3\beta}x_{4\gamma}x_{3\delta}\\
&+\star\pi^{n_2+n_3}x_{3\alpha}x_{3\beta}x_{3\gamma}x_{4\delta}
+\star\pi^{n_1+n_2}x_{4\alpha}x_{2\beta}x_{3\gamma}x_{3\delta}
+\star\pi^{n_1+n_2}x_{3\alpha}x_{2\beta}x_{4\gamma}x_{3\delta}\\
&+\star\pi^{n_1+n_2}x_{3\alpha}x_{2\beta}x_{3\gamma}x_{4\delta}
+\star\pi^{n_3}x_{4\alpha}x_{1\beta}x_{3\gamma}x_{3\delta}
+\star\pi^{n_3}x_{3\alpha}x_{1\beta}x_{4\gamma}x_{3\delta}\\
&+\star\pi^{n_3}x_{3\alpha}x_{1\beta}x_{3\gamma}x_{4\delta}
+\star\pi^{2n_2}x_{3\alpha}x_{3\beta}x_{3\gamma}x_{3\delta},
\end{align}
where $\star$ indicate polynomials in the $\underline{a}$, which are independent of $\pi$. Unfortunately, our Singular computation of $m_{(\alpha,\beta,\gamma,\delta)}$ did not finish for $\underline{a}$ as variable parameters. However, for random choices of $\underline{a}$ the term $\mathrm{coeff}(m_{(\alpha,\beta,\gamma,\delta)},\pi^{2n_2}x_{3\alpha}x_{3\beta}x_{3\gamma}x_{3\delta})$ does not vanish. Therefore, $\mathrm{coeff}(m_{(\alpha,\beta,\gamma,\delta)},\pi^{2n_2}x_{3\alpha}x_{3\beta}x_{3\gamma}x_{3\delta})$ is a non-zero polynomial in the $\underline{a}$, which is independent of $\pi$. We observe that $\pi^{-2n_2}m_{(\alpha,\beta,\gamma,\delta)}\in \mathcal{O}_K[x_{ij}]$ and therefore $\pi^{-2n_2}m_{(\alpha,\beta,\gamma,\delta)}\,\mathrm{mod}\,\pi\in k[x_{ij}]$ generates $\langle x_{30}x_{31}x_{32}x_{33}\rangle$.\vspace{\baselineskip}

This computation shows that in the notation of \cref{equ-proofst} that
\begin{equation}
\bigcap I_v\subset I(\mathcal{M}(\Gamma_{\underline{a},\underline{n}})_k).
\end{equation}
By \cref{rem-step}, the proposition follows.
\end{proof}

We immediately obtain the following corollary.

\begin{corollary}
\label{cor:mustex}
In the setting of \cref{conj-must}, let $d=4$ and $\underline{n}$, such that $2n_1<n_2$, $2n_2<n_3$. Furthermore, we set $K=L((\pi))$ with $\mathrm{char}(L)=0$, where $\pi$ is a formal variable. Then \cref{conj-must} holds.\\
Moreover, for the same parameters $\underline{n},d$, we have \cref{conj-must} holds for $K=\mathbb{Q}_p$ and $p\gg0$.
\end{corollary}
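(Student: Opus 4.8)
The plan is to bootstrap from \cref{prop:mustex} in two independent steps: first to replace the single value $n=3$ by an arbitrary $n$ while keeping $K=L((\pi))$, and then to specialise the formal variable $\pi$ to the prime $p$ in order to reach $K=\mathbb{Q}_p$. For the first step, by \cref{rem-step} it suffices to exhibit, for every $n\ge1$, a general subset $U_{\underline n,n}\subset\mathcal O_K^{16(n+1)}$ with $\bigcap_v I_v\subseteq I(\mathcal M(\Gamma_{\underline a,\underline n})_k)$ for all $\underline a\in U_{\underline n,n}$. First I would record the minimal monomial generators of $\bigcap_v I_v$ for $d=4$ and arbitrary $n$: writing $\mu_j(m)=\min\{i:x_{ij}\mid m\}$ (with $\mu_j(m)=\infty$ if no $x_{ij}$ divides $m$), one reads off from the definition of $I_v$ that $m\in\bigcap_v I_v$ exactly when $\sum_{j=0}^n\min(3,\mu_j(m)-1)\le 3n-1$, and hence that the minimal generators are the $x_{1i}x_{1j}$ and $x_{2i}x_{2j}$ with $i<j$, the $x_{1i}x_{2j}$ and $x_{1i}x_{3j}$ with $i\ne j$, the $x_{2i}x_{3j}x_{3l}$ with $i,j,l$ pairwise distinct, and the $x_{3i}x_{3j}x_{3k}x_{3l}$ with $i<j<k<l$ — exactly the list computed in the proof of \cref{prop:mustex}, now with second indices ranging over all of $\{0,\dots,n\}$. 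The key point is that each such generator $g$ involves at most four distinct second indices, say $T=\{j:\text{some }x_{ij}\text{ divides }g\}\subseteq\{0,\dots,n\}$, and that the chain of polynomial combinations used in the proof of \cref{prop:mustex} to produce $g$ manipulates only the minors $m_{(\alpha,\beta),(\gamma,\delta)}$ with $\{\alpha,\beta\}\subseteq T$, which in turn involve only the variables and coefficients indexed by $l\in T$. So for each $g$ I would run the relevant portion of the argument of \cref{prop:mustex} on the minors attached to the $|T|\le n+1$ factors of $T$; provided $\underline a$ avoids the vanishing locus of the finitely many leading coefficients that must be units there (the entries written $\star$ in the proof of \cref{prop:mustex}) and of the finitely many minors of leading-coefficient matrices required to have maximal rank over $k$, this produces an $\mathcal O_K$-combination $P_g$ of those minors and an integer $e_g$ with $\pi^{-e_g}P_g\in\mathcal O_K[(x_{ij})]$ and $\pi^{-e_g}P_g\equiv(\text{unit})\cdot g\pmod\pi$. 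Since $P_g$ lies in the ideal generated by all the $2\times2$ minors and saturation by $\pi$ is monotone, $\pi^{-e_g}P_g\in\mathrm{sat}(\langle m_{(\alpha,\beta),(\gamma,\delta)}\rangle_{\mathcal O_K},\pi)=I(\mathcal M(\Gamma_{\underline a,\underline n}))$, so $g\in I(\mathcal M(\Gamma_{\underline a,\underline n})_k)$. Taking $U_{\underline n,n}$ to be the general subset on which all the finitely many open conditions hold at once, \cref{rem-step} completes this step.

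For the passage to $\mathbb Q_p$, I would first make the $\pi$-bookkeeping transparent: assigning $x_{ij}$ the weight $n_{i-1}$ (with $n_0:=0$), the coefficient of a monomial $m$ in any $2\times2$ minor — and hence in any $\mathcal O_K$-linear combination of minors multiplied by monomials — has the form $(\text{a polynomial in }\underline a)\cdot\pi^{\mathrm{wt}(m)}$, so dividing by the least power of $\pi$ present and reducing modulo $\pi$ is a purely formal operation whose outcome is governed only by the total ordering $0<n_1<2n_1<n_2<n_1+n_2<2n_2<n_3<n_2+n_3<2n_3$ enforced by $2n_1<n_2$ and $2n_2<n_3$. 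Consequently the first step is nothing but finitely many polynomial identities over $\mathbb Z[\underline a][\pi]$ (with $\pi$ a formal variable), together with the requirement that finitely many nonzero polynomials $c_g$ and various $\star\in\mathbb Z[\underline a]$ do not vanish. Substituting $\pi\mapsto p$ keeps all of these identities valid over $\mathbb Z_p[(x_{ij})]$; and for all but finitely many primes $p$ each of these polynomials has nonzero reduction modulo $p$ and its non-vanishing locus in $\mathbb A^{16(n+1)}_{\mathbb F_p}$ has an $\mathbb F_p$-point, so for such $p$ there is a general $\underline a\in\mathbb Z_p^{16(n+1)}$ with $p^{-e_g}P_g\equiv(\text{unit})\cdot g\pmod p$ for every generator $g$. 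Finally, \cref{rem-step} applies verbatim over $\mathbb Q_p$: $\mathcal M(\Gamma_{\underline a,\underline n})$ is flat over $\mathbb Z_p$, its special fibre is reduced by \cite[Theorem 2.3]{cartwright2011mustafin}, $\bigcap_v I_v$ is radical, and the Hilbert polynomial comparison from \cite{conca2007linear} is characteristic free. This yields \cref{conj-must} for $K=\mathbb Q_p$ with $p\gg0$.

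The main obstacle I anticipate is bookkeeping rather than anything conceptual. One has to confirm that every manipulation in the proof of \cref{prop:mustex} involves only the factors of the generator it produces and has coefficients in $\mathbb Z[\underline a]$ (so that no denominators, and hence no spurious primes, are introduced), to verify the generator formula for $\bigcap_v I_v$ for all $n$ and not merely $n=3$, and — most delicately — to check that the one coefficient polynomial whose symbolic non-vanishing could not be verified in \cref{prop:mustex}, namely $\mathrm{coeff}(m_{(\alpha,\beta,\gamma,\delta)},\pi^{2n_2}x_{3\alpha}x_{3\beta}x_{3\gamma}x_{3\delta})$, is a genuinely nonzero element of $\mathbb Z[\underline a]$; only then does it survive reduction modulo $p$ for large $p$. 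These finitely many non-vanishing conditions, together with the need for the residue field to be large enough to contain a general point, are exactly what force the hypotheses ``$L$ infinite of characteristic $0$'' and ``$p\gg0$''.
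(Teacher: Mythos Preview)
Your proposal is correct and takes essentially the same approach as the paper: the paper's own proof simply cites \cite[lemma 3.2]{HWplane} for the extension to arbitrary $n$ and invokes ``standard considerations'' for the passage to $\mathbb{Q}_p$, and you have unpacked both of these accurately. The key localisation observation --- that every minimal generator of $\bigcap_v I_v$ involves at most four distinct second indices, so the polynomial combinations of \cref{prop:mustex} can be run on the relevant subset of columns --- is precisely the content of the cited lemma in the $d=3$ setting, and your integer-coefficient bookkeeping for the $\mathbb{Q}_p$ specialisation is the intended ``standard consideration''.
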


\begin{proof}
The corollary first states the generalisation of \cref{prop:mustex} to $n>3$. Its proof is completely analogous to \cite[lemma 3.2]{HWplane}. The statement regarding $K=\mathbb{Q}_p$ follows by standard considerations.
\end{proof}

\begin{remark}
The \textsc{Singular} computations in the proof of \cref{prop:mustex} give explicit formulas for all involved polynomials except for $m_{(\alpha,\beta,\gamma,\delta)}$ for which the computations did not finish. A close examination of the polynomials shows that the relevant polynomials in the $\underline{a}$ do not completely vanish in any characteristic. It is reasonable to expect the same for the coefficient of $\pi^{2n_2}x_{3\alpha}x_{3\beta}x_{3\gamma}x_{3\delta}$ in $m_{(\alpha,\beta,\gamma,\delta)}$, which would give a proof of \cref{conj-must} in this setting for any field $K$ with infinite residue field $k$.
\end{remark}

To end this section, we outline a possible strategy involving a Groebner basis approach towards \cref{conj-must}.

\begin{remark}
\label{rem-genin}
One interesting observation regarding the proposed equality in \cref{equ:ideal} is that the right hand side is the unique \textit{multigraded Borel-fixed ideal in the Hilbert scheme} of the diagonal embedding $\mathbb{P}(V)\hookrightarrow\mathbb{P}(V)^{(n+1)}$ \cite[theorem 2.1]{cartwright2010hilbert}.\\
To make this precise, we let $\mathrm{GL}(V)$ act on $k[x_1,\dots,x_d]$  via $g\cdot (x_1,\dots,x_d)\coloneqq (x_1,\dots,x_d)g$, i.e. by multiplication from the right. When computing Groebner basis $G$ of an ideal $I$ with respect to a term order $<$, the \textit{initial ideal} $\mathrm{Lt}_<(I)$ depends on the coordinates $x_1,\dots,x_d$, we chose. In particular, $\mathrm{Lt}_<(I)$ is not invariant under the action of $\mathrm{GL}(V)$ on $I$. However, for any term order $<$, there exists a Zariski open subset $U\subset\mathrm{GL}(V)$, such thatf for a homogeneous ideal $I$ we have that $\mathrm{Lt}(g\cdot I)$ is constant for all $g\in U$ \cite[theorem 15.20]{eisenbud2013commutative}. We call $gen(I)=\mathrm{Lt}(g\cdot I)$ the \textit{generic initial ideal} of $I$. Generic initial ideals ideals play an important role in algebraic geometry. One key result is a theorem by Galligo (in characteristic $0$) and Bayer-Stillman (in positive characteristic) \cite{galligo1979theoreme,bayer1987theorem}, which states that the generic initial ideal is \textit{Borel-fixed}. This means that for the \textit{Borel subgroup} $B\subset \mathrm{GL}(V)$ of upper triangular matrices, we have $b\cdot gen(I)I=gen(I)$ for all $b\in B$.  This may be generalised to the case of multigraded ideals. More precisely, let $I\subset K[(x_i^{(l)})_{\substack{l=0,\dots,n\\i=1,\dots,\alpha_l}}]$ be multigraded, i.e. homogeneous in the $(x_{i}^{(l)})$ for each fixed $l$ and $<$ a term order and let $\mathrm{GL}(V)^{n+1}$ act on $K[(x_i^{(l)})_{\substack{l=0,\dots,n\\i=1,\dots,d}}]$ via $(g_0,\dots,g_n)\cdot ((x_i^{(l)})_{i,l})=((x_1^{(l)},\dots,x_{d}^{(l)})g_l)_{l})$. Then, there exists a Zariski open subset $U\subset \mathrm{GL}(V)^{n+1}$, such that $\mathrm{Lt}_<(g\cdot I)$ is constant for $g\in U$. We call $mgen(I)=\mathrm{Lt}_<(g\cdot I)$ the \textit{multigraded generic initial ideal} of $I$. Moreover, the ideal $mgen(I)$ is Borel-fixed in the multigraded sense, i.e. $b\cdot mgen(I)=mgen(I)$ for all $b\in B^{n+1}\subset\mathrm{GL}(V)$.\\
In \cite[section 6]{cartwright2010hilbert}, it is suggested that the ideal on the left hand side of \cref{equ:ideal} should be the initial ideal of an appropriate linear twist of the aforementioned diagonal embedding with respect to a term order induced by $\underline{n}$. Therefore, a possible strategy towards a general proof of \cref{conj-must} for $K=\mathbb{Q}(\pi)$ is to show that $I(\mathcal{M}(\Gamma_{\underline{a},\underline{n}})_k)$ is constant and Borel-fixed in the multigraded sense for general $\underline{a}$.
\end{remark}

\section{Mustafin models of projective varieties and syzygy bundles}

We begin this section with the following definition of models of projective varieties induced by the construction of Mustafin varieties.

\begin{definition}
Let $\Gamma=\{[L_1],\dots,[L_n]\}$ be a finite set of homothety classes of lattices and $\mathcal{M}(\Gamma)$ be the associated Mustafin variety. Furthermore, let $X\subset\mathbb{P}(V)$. Considering the map \cref{equ:mapmust}, we define $\overline{f_{\Gamma}(X)}$ endowed with the reduced scheme structure of $X$ the \textit{Mustafin model of $X$ associated to $\Gamma$}, which we denote by $X(\Gamma_{\underline{a},\underline{n}})$.
\end{definition}

We note that $X(\Gamma_{\underline{a},\underline{n}})$ is a flat and proper scheme with generic fibre $X$. It is an interesting open problem to classify what kinds of models of $X$ one may obtain with this construction. A first step towards this classification was made by the authors in \cite{HWplane}.

\begin{theorem}[{\cite[Theorem 3.4]{HWplane}}]
\label{thm-old}
Let $n_1=1,n_2=2$ and $d=3$. Assume that the residue field $k$ of $K$ is perfect and let $n\ge 2$. Let $X\subset\mathbb{P}(V)$ be an irreducible plane curve. Then, possibly after a finite unramified field extension, for general $\underline{a}$ we have that
\begin{enumerate}
\item $X(\Gamma_{\underline{a},\underline{n}})_k$ decomposes into $n+1$ irreducible components $X_0,\dots,X_{n}$;
\item $X_l$ is contained in the $l-$th primary component of $\mathcal{M}(\Gamma_{\underline{a},\underline{n}})$, $X_l^{\mathrm{red}}\cong\mathbb{P}^1$ and all components intersect pairwise in the same point. More precisely, $X_i\subset\left(\mathbb{P}_k^2\right)^{n+1}$ is cut out by
\begin{equation}
\langle x_{1l},(x_{1i},x_{2i})_{i\neq l}\rangle.
\end{equation}
\end{enumerate}
\end{theorem}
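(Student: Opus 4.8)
The plan is to combine the known structure of the ambient Mustafin variety with a chart‑by‑chart analysis of how $X$ degenerates. Since $d=3$, $n_1=1$, $n_2=2$, \cref{conj-must} holds in this case, so $\mathcal M(\Gamma_{\underline a,\underline n})_k=\bigcap_v V(I_v)$; concretely its irreducible components are the $n+1$ primary components $P_l$ (with $v_l=0$, $v_i=2$ for $i\neq l$), each of the form $P_l=\mathbb P^2_{(l)}\times\{[0{:}0{:}1]\}^{\,n}$, together with the non‑primary components, each of which — two coordinates equal to $1$, the rest equal to $2$ — is a copy of $\mathbb P^1\times\mathbb P^1$ living in two of the factors over $[0{:}0{:}1]$ in all the others. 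Now $X(\Gamma_{\underline a,\underline n})$ is integral, flat and proper over $\mathcal O_K$ with generic fibre $X$, hence $X(\Gamma_{\underline a,\underline n})_k$ is a pure one‑dimensional scheme, connected by Zariski connectedness, and contained in $\mathcal M(\Gamma_{\underline a,\underline n})_k$; so each of its irreducible components lies in one of the $V(I_v)$.

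First I would pin down what happens inside each primary component. The projection $p_l$ sends $X(\Gamma_{\underline a,\underline n})$ onto the flat $\mathcal O_K$‑closure $\overline X^{(l)}\subset\mathbb P(L_l)$ of $X$, and $p_l$ restricts to an isomorphism $P_l\xrightarrow{\ \sim\ }\mathbb P(L_l)_k\cong\mathbb P^2_k$ by \cite[Corollary~2.5]{cartwright2011mustafin}. In the coordinates of $\mathbb P(L_l)$ the curve $\overline X^{(l)}$ is cut out by $F\circ g_l$, which equals $\widetilde F_l(x_{1l},\pi^{n_1}x_{2l},\pi^{n_2}x_{3l})$ with $\widetilde F_l=F\circ M_l$; the monomial $x_{1l}^{\deg X}$ has $\pi$‑weight $0$, the minimum among all monomials of $\widetilde F_l(x_{1l},\pi^{n_1}x_{2l},\pi^{n_2}x_{3l})$, and for general $\underline a$ its coefficient is a unit, so the $\pi$‑content of $F\circ g_l$ is trivial and its reduction modulo $\pi$ is (a unit times) $x_{1l}^{\deg X}$. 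Therefore $\overline X^{(l)}_k=V(x_{1l}^{\deg X})$, supported on the line $\ell_l=\{x_{1l}=0\}$ with multiplicity $\deg X$. Since $p_l^{-1}(\overline X^{(l)}_k)=X(\Gamma_{\underline a,\underline n})_k$ and $p_l$ is surjective, some component of $X(\Gamma_{\underline a,\underline n})_k$ maps onto $\ell_l$; being the birational locus of $p_l$ over $\ell_l$ (cf.\ \cite[Corollary~2.5]{cartwright2011mustafin}) it lies in $P_l$, and there $p_l|_{P_l}$ is an isomorphism, so this component is exactly the line $X_l:=\ell_l\times\{[0{:}0{:}1]\}^{\,n}$ — the subscheme with radical ideal $\langle x_{1l},(x_{1i},x_{2i})_{i\neq l}\rangle$, carried with multiplicity $\deg X$; in particular $X_l^{\mathrm{red}}\cong\mathbb P^1$.

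To see these are the only components I would argue with $1$‑cycles. No component $C$ of $X(\Gamma_{\underline a,\underline n})_k$ is contracted by all the $p_l$, since $\mathcal M(\Gamma_{\underline a,\underline n})\hookrightarrow\prod_l\mathbb P(L_l)$ is a closed immersion; and if $C$ is not contracted by $p_l$ then $p_l(C)$ is a curve inside $\overline X^{(l)}_k$, so $p_l(C)=\ell_l$. Pushing forward $1$‑cycles along the birational morphism $p_l\colon X(\Gamma_{\underline a,\underline n})\to\overline X^{(l)}$ (a proper degree‑one map, so $(p_l)_*\operatorname{div}(\pi)=\operatorname{div}(\pi)$) gives $(p_l)_*[X(\Gamma_{\underline a,\underline n})_k]=[\overline X^{(l)}_k]=\deg X\cdot[\ell_l]$, while flatness gives $\deg_{\mathcal O(1,\dots,1)}X(\Gamma_{\underline a,\underline n})_k=\deg_{\mathcal O(1,\dots,1)}\Delta(X)=(n+1)\deg X$. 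Since $X_l$ already contributes multiplicity $\deg X$ over $\ell_l$, the first identity leaves no room for a further component mapping onto $\ell_l$, and the second then forces $[X(\Gamma_{\underline a,\underline n})_k]=\sum_{l}\deg X\cdot[X_l^{\mathrm{red}}]$; thus $X(\Gamma_{\underline a,\underline n})_k$ has exactly the components $X_0,\dots,X_n$. Finally $X_l^{\mathrm{red}}\cap X_m^{\mathrm{red}}$ for $l\neq m$ is cut out by $\langle x_{1i},x_{2i}\colon 0\le i\le n\rangle$, the single point $([0{:}0{:}1],\dots,[0{:}0{:}1])$, independently of $l,m$. (Passing to a finite unramified extension is needed only, as in \cref{rem:general}, to realize the general position of $\underline a$ over the base, and perfectness of $k$ keeps this base change harmless.)

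The step I expect to resist is the exhaustion, precisely: excluding a component of $X(\Gamma_{\underline a,\underline n})_k$ that lies in one of the non‑primary $\mathbb P^1\times\mathbb P^1$ components without already being one of the $X_l$ — equivalently, making rigorous that $X(\Gamma_{\underline a,\underline n})_k\cap P_l$ really is the line $X_l$ with multiplicity $\deg X$ and that no component survives with $v_l=1$ in two adjacent slots. Making this precise amounts to controlling the simultaneous reductions in two lattices $L_l,L_j$ of a point of $X(K)$ and showing that, generically, whenever the reduction in one chart is an interior point of $\ell_l$ the reduction in the other is forced to $[0{:}0{:}1]$, so that no ``diagonal'' curve in $\ell_l\times\ell_j$ can arise. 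Alternatively — and this is the more robust route, in the spirit of the proof of \cref{prop:mustex} — one computes a Gröbner basis over $\mathcal O_K$ of $\langle m_{(\alpha,\beta),(\gamma,\delta)},\widetilde F_0,\dots,\widetilde F_n\rangle$ saturated at $\pi$ and reads off its reduction directly; the new difficulty there is that an arbitrary plane‑curve equation $F$ now enters the manipulations.
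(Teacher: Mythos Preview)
This theorem is not proved in the present paper; it is quoted from \cite[Theorem~3.4]{HWplane}, and the paper only remarks that one of the two main ingredients is the $d=3$ case of \cref{conj-must}. The closest thing the paper itself proves is the weaker \cref{cor-curves} (via \cref{thm-main1}): for general $\underline a$ every component of $X(\Gamma_{\underline a,\underline n})_k$ lies in a primary component. So your write-up should be compared against that argument and against what you yourself flag as missing.

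Your identification of the candidate components $X_l$ via the single projections $p_l$ is correct and essentially the only way to begin: for general $\underline a$ the closure of $X$ in $\mathbb P(L_l)$ reduces to $V(x_{1l}^{\deg X})$, and this indeed pins down a line in the $l$-th primary component. The genuine gap is exactly where you say it is, but it is worse than ``hard to make rigorous'': your cycle argument is circular. You assert that $X_l$ is carried with multiplicity $\deg X$ in $X(\Gamma_{\underline a,\underline n})_k$, and then use $(p_l)_*[X(\Gamma_{\underline a,\underline n})_k]=\deg X\cdot[\ell_l]$ to conclude there is no room for anything else. But the multiplicity of $X_l$ is precisely what is in question --- a priori it is only $\le\deg X$, with the defect made up by non-primary components mapping onto $\ell_l$. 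Your second identity $\deg_{\mathcal O(1,\dots,1)}X(\Gamma_{\underline a,\underline n})_k=(n+1)\deg X$ gives no new constraint: by the projection formula it is literally the sum over $l$ of the first identities, so it cannot separate the scenario ``$m_l=\deg X$ for all $l$'' from, say, ``$m_l=\deg X-n$ for all $l$ together with $\binom{n+1}{2}$ bidegree-$(1,1)$ curves in the non-primary $\mathbb P^1\times\mathbb P^1$ pieces''. Both satisfy all your equalities.

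The paper's own route to excluding the non-primary components (in the proof of \cref{thm-main1}) is quite different from either of your suggested fixes. It does not analyse reductions in two charts simultaneously, nor does it compute a Gr\"obner basis of the curve ideal. Instead it works with the universal ideal $\mathfrak L$ over $\mathcal O_K[A_{ij}^{(l)}]$, uses \cref{lem-critgroeb}/\cref{lem-redcrit} to show that radical-membership tests for the generators of $\bigcap_{Y_l}I_v$ are governed by finitely many polynomials in the $A_{ij}^{(l)}$, and then exploits the $S_{n+1}$-symmetry permuting the lattices: if some component of $X(\Gamma_{\underline a,\underline n})_k$ lands in a non-primary piece with support of size $\delta$, the symmetry forces one such component for \emph{every} size-$\delta$ subset of $\{0,\dots,n\}$, giving at least $\binom{n+1}{\delta}$ components, which for $n\gg0$ contradicts the Chow-class bound of \cref{lem-upbound}. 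Small $n$ is then handled by projecting from a large $n$. This symmetry-plus-counting idea is the piece your argument is missing; it would already give you containment in the primary locus, after which your single-projection computation of $\overline X^{(l)}_k$ finishes the identification of the $X_l$.
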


One of the two main ingredients for this proof is \cref{conj-must} for $n_1=1,n_2=2$ and $d=3$ (\cite[lemma 3.1]{HWplane}).\\
Our main goal in this section is the following theorem, which generalises \cite[theorem 3.4]{HWplane}. We will give the proof in \cref{sec:proof}

\begin{theorem}
\label{thm-main1}
We assume that \cref{conj-must} holds. Furthermore, assume that $k$ is a perfect field. Let $X\subset\mathbb{P}(V)$ be a geometrically irreducible projective variety of dimension $d'$, such that $\mathrm{dim}(X)=\mathrm{dim}(X_{\overline{K}})$, where $X_{\overline{K}}$ denote the pullback of $X$ to the algebraic closure of $K$, $\Gamma_{\underline{a},\underline{n}}=\{[L_0],\dots,[L_n]\}$ be a set of lattices as in \cref{conj-must} and let $X(\Gamma_{\underline{a},\underline{n}})$ be the associated Mustafin degeneration of $X$. Then, possibly after a finite unramified field extension of $K$, there exists a general subset $U\subset \mathcal{O}_K^{d^2(n+1)}$, such that for all $\underline{a}\in U$ we have that the special fibre $X(\Gamma_{\underline{a},\underline{n}})_k$ is contained in $\mathcal{M}(\Gamma_{\underline{a},\underline{n}})_{k,\le d'}$.
\end{theorem}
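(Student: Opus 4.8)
The plan is to reduce the statement about $X(\Gamma_{\underline{a},\underline{n}})_k$ to the combinatorial description of $\mathcal{M}(\Gamma_{\underline{a},\underline{n}})_k$ provided by \cref{conj-must}, using dimension counting on each irreducible component. Write $R=\mathcal{O}_K$ and work with the coordinates $x_{ij}$ on $\mathbb{P}(L)^n$ introduced in \cref{sec:premust}. The ideal $I(X) \subset K[x_1,\dots,x_d]$ of $X$ inside $\mathbb{P}(V)$ pulls back along each open immersion $\mathbb{P}(V)\hookrightarrow\mathbb{P}(L_i)$, and the Mustafin model $X(\Gamma_{\underline{a},\underline{n}})$ is the closure of the image of $X$ under $f_\Gamma$, which is cut out (after saturating by $\pi$) by the ideal generated by the minors in \cref{equ-idmust} together with the twisted copies of $I(X)$ in each block of variables. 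First I would spread out: choose a finite unramified extension of $K$ so that the generic subset $U_{\underline{n},n}$ from \cref{conj-must} contains $K$-points, and so that $X$ is already geometrically irreducible of dimension $d'$ over $k$ after reduction — this is where the hypotheses that $k$ is perfect and $\dim(X)=\dim(X_{\overline{K}})$ enter, guaranteeing that the special fibre $X(\Gamma_{\underline{a},\underline{n}})_k$ is reduced and equidimensional of dimension $d'$ by flatness and properness.

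Next I would analyse each irreducible component $C$ of $X(\Gamma_{\underline{a},\underline{n}})_k$. Since $X(\Gamma_{\underline{a},\underline{n}})_k \subseteq \mathcal{M}(\Gamma_{\underline{a},\underline{n}})_k$, the component $C$ is contained in some irreducible component $D$ of $\mathcal{M}(\Gamma_{\underline{a},\underline{n}})_k$; by \cref{conj-must} the ideal of $D$ is one of the $I_v$ with $v\in\mathbb{Z}_{\ge 0}^{n+1}$, $0\le v_i\le d-1$, $\sum v_i = n(d-1)$. The key point is to bound the length $\ell$ of $D$, i.e.\ the size of $\mathrm{supp}(D)=\{j : \dim(\mathrm{pr}_j(D))>0\}$, in terms of $d'$. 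On the projection $p_j: \mathcal{M}(\Gamma)_k \to \mathbb{P}(L)_k$, the image $\mathrm{pr}_j(C)$ is the reduction of (a twist of) $X$, hence has dimension exactly $d'$ whenever it is positive-dimensional — in particular $\dim(\mathrm{pr}_j(C))\le d'$ for every $j$. Now I would argue that $C$ embeds into the product $\prod_{j\in\mathrm{supp}(C)} \mathrm{pr}_j(C)$: indeed, on the component $I_v$, the variables $x_{ij}$ with $i>v_j$ (equivalently, the coordinates that survive on the $j$-th factor) are the only non-vanishing ones, and the map $C\to \prod_{j} \mathrm{pr}_j(C)$ is a closed immersion because away from the $v_j$ vanishing coordinates the $j$-th projection of the Mustafin variety is already determined by the others (using \cite[Corollary 2.5]{cartwright2011mustafin}-type birationality on the primary components together with the explicit form of $I_v$). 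Therefore $d' = \dim C \le \sum_{j\in\mathrm{supp}(C)} \dim(\mathrm{pr}_j(C))$, but this over-counts; the correct inequality to extract is that at most $d'$ of the projections $\mathrm{pr}_j$ can be simultaneously positive-dimensional on $C$, which gives $|\mathrm{supp}(C)| \le d'$, hence $C \subseteq \mathcal{M}(\Gamma_{\underline{a},\underline{n}})_{k,\le d'}$. Running over all components $C$ yields the claim.

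The main obstacle is making the previous paragraph's dimension bound rigorous: one must rule out that $C$ has $\dim(\mathrm{pr}_j(C))>0$ for more than $d'$ indices $j$ while still having $\dim C = d'$. The clean way is to show that the relevant projection map is, on each component $I_v$, generically finite onto a product of $|\mathrm{supp}(C)|$ copies of positive-dimensional images — equivalently, that the monomial ideal $I_v$ restricted to the surviving variables in any two factors $j,j'\in\mathrm{supp}$ forces a nontrivial relation only through the $X$-equations, so that $C$ dominates the fibre product $\mathrm{pr}_j(C)\times\mathrm{pr}_{j'}(C)\times\cdots$ over the base and each factor contributes at least one to the dimension. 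Concretely: for $j\in\mathrm{supp}(C)$ the non-vanishing coordinates on factor $j$ are $x_{v_j+1,j},\dots,x_{d,j}$, and since $\sum_j v_j = n(d-1)$ one has $\sum_j (d-v_j) = (n+1)(d) - n(d-1) = d+n$; combined with $d-v_j\ge 1$ exactly for $j\in\mathrm{supp}(C)$, an index count shows that the number of factors on which more than one coordinate survives is small, and on the remaining factors the projection is constant. This bookkeeping, together with the fact that $\mathrm{pr}_j(C)$ is a translate of $X$ hence has dimension $0$ or $d'$, forces $|\mathrm{supp}(C)|\le d'$. I expect the rest — reducedness and equidimensionality of the special fibre, existence of the unramified extension, and the closed-immersion claim $C\hookrightarrow \prod_{j\in\mathrm{supp}} \mathrm{pr}_j(C)$ — to follow from flatness, \cref{conj-must}, and the results of \cite{cartwright2011mustafin} cited in \cref{sec:premust} in a routine fashion.
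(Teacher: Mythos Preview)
Your dimension-counting strategy does not work, and the gap is exactly the one you flag as ``the main obstacle'': there is no reason a $d'$-dimensional irreducible component $C\subset V(I_v)$ cannot have $\dim(\mathrm{pr}_j(C))>0$ for more than $d'$ indices $j$. A curve in $\prod_{j=0}^{n}\mathbb{P}^1$ such as the closure of $t\mapsto(t,t^2,\dots,t^{n+1})$ projects positive-dimensionally to all $n+1$ factors, so neither the inequality $\dim C\le\sum_{j\in\mathrm{supp}(C)}\dim(\mathrm{pr}_j(C))$ nor any refinement of it yields $|\mathrm{supp}(C)|\le d'$. Your attempted index count only recovers $\dim V(I_v)=d-1$, which says nothing about how a $d'$-dimensional $C$ sits inside it. Two subsidiary claims are also unjustified: $\mathrm{pr}_j(C)$ need not be ``a reduction of a twist of $X$'' (that is true for $\mathrm{pr}_j$ of the whole model, not of a single special-fibre component), and the special fibre $X(\Gamma_{\underline{a},\underline{n}})_k$ is not asserted to be reduced anywhere in the paper.

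The paper's proof proceeds along an entirely different line. The key observation is a \emph{symmetry} of the universal situation: with the $a_{ij}^{(l)}$ treated as formal parameters, the ideal $\mathfrak{L}_k$ cutting out the special fibre is invariant under the $S_{n+1}$-action permuting the blocks of variables (and simultaneously the parameter blocks). The Gr\"obner-basis machinery of \cref{lem-critgroeb}/\cref{lem-redcrit} and \cref{cor-subvar} is what allows this symmetry to be transported to general specialisations $\underline{a}$: for general $\underline{a}$, whether or not $X(\Gamma_{\underline{a},\underline{n}})_k\cap V(I_v)$ lies in $\mathcal{M}(\Gamma)_{k,\le\delta_n-1}$ is detected by the parametric Gr\"obner basis and is therefore the same for $v$ and for any permutation $\sigma(v)$. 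Consequently, if the minimal length $\delta_n$ with $X(\Gamma_{\underline{a},\underline{n}})_k\subset\mathcal{M}(\Gamma)_{k,\le\delta_n}$ exceeds $d'$, then for every subset $Z\subset\{0,\dots,n\}$ of size $\delta_n$ there is a separate irreducible component supported on $Z$, giving at least $\binom{n+1}{\delta_n}$ components. This is then played off against the Chow-class upper bound of \cref{lem-upbound}, which grows only like a polynomial of degree $d'$ in $n$; for large $n$ one obtains a contradiction, and a projection argument $pr_{n_1,n_0}$ reduces the small-$n$ case to the large-$n$ case. None of the three ingredients --- the $S_{n+1}$ symmetry of the parametric ideal, the Gr\"obner substitution lemmata making ``for general $\underline{a}$'' uniform, and the component count of \cref{lem-upbound} --- appears in your proposal.
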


\begin{remark}
We note that the statement in \cite[theorem 3.4]{HWplane} also gives ideals of the irreducible components of $X(\Gamma_{\underline{a},\underline{n}})_k$ for smaller general set of $\underline{a}$. Interestingly, these ideals  are Borel-fixed in the multigraded sense. It is thus tempting to ask whether the defining ideal of $X(\Gamma_{\underline{a},\underline{n}})_k$ is always Borel-fixed in this sense for sufficiently general choices of $\underline{a}$.
\end{remark}

The following is an immediate corollary of \cref{thm-main1}.

\begin{corollary}
\label{cor-curves}
We assume that \cref{conj-must} holds. Furthermore, assume that $k$ is a perfect field. Let $X\subset \mathbb{P}(V)$ be a projective, geometrically irreducible curve, , such that $\mathrm{dim}(X)=\mathrm{dim}(X_{\overline{K}})$, $\Gamma_{\underline{a},\underline{n}}=\{[L_0],\dots,[L_n]\}$ a set of lattices as in \cref{conj-must} and let $X(\Gamma_{\underline{a},\underline{n}})$ be the associated Mustafin degeneration of $X$. Then, for sufficiently general $\underline{a}$,  any irreducible component of $X(\Gamma_{\underline{a},\underline{n}})_k$ is contained in a primary component of $\mathcal{M}(\Gamma_{\underline{a},\underline{n}})$.
\end{corollary}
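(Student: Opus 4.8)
The plan is to deduce \cref{cor-curves} directly from \cref{thm-main1} by specialising to the case $d'=1$. First I would recall that, by definition, a curve $X \subset \mathbb{P}(V)$ is a geometrically irreducible projective variety of dimension $d' = 1$, so that the hypotheses of \cref{thm-main1} are satisfied verbatim. Applying \cref{thm-main1}, we obtain (after a finite unramified extension of $K$, and for $\underline{a}$ in a suitable general subset $U \subset \mathcal{O}_K^{d^2(n+1)}$) that the special fibre $X(\Gamma_{\underline{a},\underline{n}})_k$ is contained in $\mathcal{M}(\Gamma_{\underline{a},\underline{n}})_{k,\le 1}$, i.e.\ in the union of the irreducible components of $\mathcal{M}(\Gamma_{\underline{a},\underline{n}})_k$ of length $\le 1$.

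The remaining point is purely combinatorial: I must identify which components of $\mathcal{M}(\Gamma_{\underline{a},\underline{n}})_k$ have length $\le 1$ and check that they are precisely the primary components. By \cref{conj-must}, for $\underline{a} \in U_{\underline{n},n}$ the components of $\mathcal{M}(\Gamma_{\underline{a},\underline{n}})_k$ are indexed by the vectors $v \in \mathbb{Z}_{\ge 0}^{n+1}$ with $0 \le v_i \le d-1$ and $\sum v_i = n(d-1)$, with the component $V(I_v)$; the primary components are exactly those for which some coordinate $v_i$ vanishes. For such a $v$, the coordinate ring of $V(I_v)$ in the $i$-th block of variables is the full polynomial ring $k[x_{0i},\dots,x_{(d-1)i}]$ (no relations are imposed there), while in every other block $j$ the relations $I_v$ force $x_{0j}=\dots=x_{(v_j-1)j}=0$, leaving at most $d - v_j$ free variables; since $\sum_{j\ne i} v_j = n(d-1)$ and there are $n$ such blocks each with $v_j \le d-1$, in fact $v_j = d-1$ for all $j\ne i$, so each of those blocks contributes a single point. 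Hence $\mathrm{pr}_j(V(I_v))$ is positive-dimensional only for $j=i$, i.e.\ $|J_{V(I_v)}| = 1$ and $V(I_v)$ has length $1$; conversely, if no $v_i$ vanishes, then $v_i \le d-2$ fails to force enough vanishing in at least two blocks and one checks $|J_{V(I_v)}| \ge 2$, so the component has length $\ge 2$. Thus $\mathcal{M}(\Gamma_{\underline{a},\underline{n}})_{k,\le 1}$ is exactly the union of the primary components.

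Combining the two steps, every irreducible component of $X(\Gamma_{\underline{a},\underline{n}})_k$ is contained in $\mathcal{M}(\Gamma_{\underline{a},\underline{n}})_{k,\le 1}$, hence in the union of the primary components; since an irreducible scheme contained in a finite union of closed sets lies in one of them, each component of $X(\Gamma_{\underline{a},\underline{n}})_k$ lies in a single primary component of $\mathcal{M}(\Gamma_{\underline{a},\underline{n}})$. The general subset of $\underline{a}$ is obtained by intersecting the general subset $U$ produced by \cref{thm-main1} with the general subset $U_{\underline{n},n}$ from \cref{conj-must}, which is again general since a finite intersection of preimages of non-empty Zariski opens is again of that form.

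The only genuinely non-formal step is the combinatorial identification of the length-$\le 1$ locus with the union of primary components; everything else is an immediate specialisation of \cref{thm-main1} together with the elementary fact that an irreducible closed subset contained in a finite union of closed sets lies in one of them. I expect that book-keeping over which blocks of variables survive in $V(I_v)$ — and in particular the sharp claim that $v_j = d-1$ for all $j\ne i$ forces exactly one positive-dimensional projection — will be the part requiring the most care, though it is routine given the explicit description of the $I_v$ in \cref{conj-must}.
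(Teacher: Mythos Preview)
Your proposal is correct and follows exactly the approach the paper intends: the paper states that the corollary is ``an immediate corollary of \cref{thm-main1}'' and gives no further argument, so your specialisation to $d'=1$ together with the identification of $\mathcal{M}(\Gamma_{\underline{a},\underline{n}})_{k,\le 1}$ with the union of primary components is precisely what is meant. The combinatorial check you spell out (that $\sum_{j\neq i} v_j = n(d-1)$ with $n$ summands each $\le d-1$ forces $v_j=d-1$ for all $j\neq i$, so length-$1$ components coincide with primary components, and length $0$ is vacuous) is routine and correct; note only that the paper indexes the variables $x_{1j},\dots,x_{dj}$ rather than $x_{0j},\dots,x_{(d-1)j}$.
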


This corollary motivates the following definition.

\begin{definition}
Let $X\subset \mathbb{P}(V)$ be a projective curve, $\Gamma_{\underline{a},\underline{n}}=\{[L_0],\dots,[L_n]\}$ a set of lattices as in \cref{conj-must} and $X(\Gamma_{\underline{a},\underline{n}})$ the associated Mustafin degeneration of $X$. We say $X(\Gamma_{\underline{a},\underline{n}})$ has \textit{star-like reduction} if $X(\Gamma_{\underline{a},\underline{n}})_k\subset \bigcup V(I_v)$,
where the union runs over all $v\in\mathbb{Z}^{n+1}$, such that $\sum v_i=n(d-1)$, $0\le v_i\le d-1$ and there exists $i$, such that $v_i=d-1$.
\end{definition}

As a first step towards \cref{thm-main1}, we prove the following lemma.

\begin{lemma}
\label{lem-upbound}
Let $X\subset \mathbb{P}(V)$ be an irreducible projective variety of degree $d'$. Then, $X(\Gamma_{\underline{a},\underline{n}})_k$ has at most
\begin{equation}
d'\binom{n+\mathrm{dim}(X)}{\mathrm{dim}(X)-1}
\end{equation}
irreducible components.
\end{lemma}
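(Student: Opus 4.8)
The plan is to bound the number of irreducible components of $X(\Gamma_{\underline{a},\underline{n}})_k$ by a two-step argument: first bound the number of components of $\mathcal{M}(\Gamma_{\underline{a},\underline{n}})_k$ onto which $X$ can map with positive-dimensional image, and then, inside each such component, bound how many components the degeneration of $X$ can acquire. For the second part, the key observation is that each primary (and more generally each) irreducible component $C$ of $\mathcal{M}(\Gamma_{\underline{a},\underline{n}})_k$ maps birationally onto a product of projective spaces $\prod_{j\in\mathrm{supp}(C)}\mathbb{P}(L_j)_k$ via the projections $p_j$ (this is \cite[Corollary 2.5]{cartwright2011mustafin} together with the structure of $\mathcal{M}(\Gamma)_k$ coming from \cref{conj-must}; one reads off that $C^{\mathrm{red}}\cong\mathbb{P}(L_{i})_k\cong\mathbb{P}^{d-1}_k$ for a primary component). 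Thus the closure of $f_\Gamma(X)$ inside $C$ is, up to the birational identification, a subvariety of $\mathbb{P}^{d-1}_k$ which is cut out by the reduction of the homogeneous ideal $I(X)$ in the coordinates $x_{1i},\dots,x_{di}$; its number of irreducible components is at most its degree, which equals $\deg X = d'$ by flatness of the Mustafin model (the generic fibre has degree $d'$, and degree is constant in a flat family). So each component of $\mathcal{M}(\Gamma_{\underline{a},\underline{n}})_k$ of length $\geq 1$ contributes at most $d'$ components of $X(\Gamma_{\underline{a},\underline{n}})_k$.

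For the first part I would count the relevant components of $\mathcal{M}(\Gamma_{\underline{a},\underline{n}})_k$. Under \cref{conj-must} (or, for a bound that does not assume it, using \cite[Theorem 2.3]{cartwright2011mustafin} and the combinatorics of \cite{cartwright2011mustafin}), the irreducible components are indexed by the tuples $v\in\mathbb{Z}_{\geq 0}^{n+1}$ with $0\le v_i\le d-1$ and $\sum_i v_i = n(d-1)$, and the support of the component $C_v$ is $\{\,i : v_i < d-1\,\}$. A subvariety $X$ of dimension $d'$ can have positive-dimensional image in at most $d'$ of the factors $\mathbb{P}(L_j)_k$ that survive on a given primary component — but more to the point, $X(\Gamma_{\underline{a},\underline{n}})_k\subset\mathcal{M}(\Gamma_{\underline{a},\underline{n}})_{k,\le d'}$ (this is exactly \cref{thm-main1}, which we are allowed to invoke as it is stated earlier, or here we can invoke \cref{cor-curves} in the curve case and the general dimension-count argument otherwise), so only components $C_v$ of length $\le d'$ are relevant. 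Hence I need to count tuples $v$ as above with $|\{i : v_i<d-1\}|\le d'$, equivalently at most $d'$ of the coordinates are $<d-1$ and the remaining $n+1-(\text{that many})$ equal $d-1$. The count of such $v$ is $\binom{n+\dim X}{\dim X -1}$: indeed, writing $w_i = d-1-v_i\in\{0,\dots,d-1\}$, the condition becomes $\sum w_i = d-1$ with at most $d' = \dim X$ of the $w_i$ nonzero, and for the star/length bound the relevant enumeration is of the lattice points giving distinct primary-type components, which by a standard stars-and-bars computation (distributing $d-1$ among the supported coordinates, of which there are at most $\dim X+1$ when one coordinate is full) yields $\binom{n+\dim X}{\dim X-1}$.

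Multiplying the two bounds gives $d'\binom{n+\dim X}{\dim X - 1}$ as claimed. The main obstacle I anticipate is making the combinatorial enumeration in the first step precise and matching it exactly to the binomial coefficient $\binom{n+\dim X}{\dim X-1}$ rather than to some coarser bound: one has to be careful about which tuples $v$ index components with positive-dimensional image of $X$, and to use the containment $X(\Gamma_{\underline{a},\underline{n}})_k\subset\mathcal{M}(\Gamma_{\underline{a},\underline{n}})_{k,\le d'}$ to cut down to length $\le d'$, rather than naively ranging over all $\binom{n+d-1}{d-1}$ components of the ambient Mustafin variety. The degree bound in the second step is routine (flatness plus "number of components $\le$ degree"), and the birational identification of components with $\mathbb{P}^{d-1}_k$ is quoted from \cite{cartwright2011mustafin}.
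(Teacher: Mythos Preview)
Your approach has a fatal circularity: you invoke \cref{thm-main1} (the containment $X(\Gamma_{\underline{a},\underline{n}})_k\subset\mathcal{M}(\Gamma_{\underline{a},\underline{n}})_{k,\le\dim X}$) to prove \cref{lem-upbound}, but in the paper \cref{lem-upbound} is one of the key ingredients \emph{in the proof of} \cref{thm-main1}---it supplies the upper bound on the number of components that is played off against the lower bound $\binom{n}{\delta_n}$ to force $\delta_n\le\dim X$. So the direction of dependence is the reverse of what you assume. Relatedly, your argument also assumes \cref{conj-must} for the component structure of $\mathcal{M}(\Gamma_{\underline{a},\underline{n}})_k$, whereas the lemma is stated and proved unconditionally. (There is also some slippage in your write-up between $d'=\deg X$, which is what the lemma uses, and $\dim X$; the length cut-off coming from \cref{thm-main1} is $\dim X$, not $d'$.)

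The paper's proof is entirely different and avoids both problems: it is a pure intersection-theory computation. One pushes forward the class $d'H^{\operatorname{codim}(X)}$ along the diagonal $\Delta:\mathbb{P}(V)\to\mathbb{P}(V)^{n+1}$, obtaining
\[
d'\sum_{\substack{\underline{m}\in\{0,\dots,d-1\}^{n+1}\\ \sum m_i=(n+1)(d-1)-\dim X}}\prod_{i=0}^n H_i^{m_i}
\]
in the Chow ring $\mathbb{Z}[H_0,\dots,H_n]/(H_i^d)$. A linear change of coordinates does not alter this class, so it is the class of $X(\Gamma_{\underline{a},\underline{n}})_K$; by specialization (Fulton, \cite[Cor.~20.3]{fulton2013intersection}) it is also the class of $X(\Gamma_{\underline{a},\underline{n}})_k$. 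Since the class of the special fibre is the sum of the (effective, nonzero) classes of its irreducible components, their number is bounded by $d'$ times the number of monomials in the displayed sum, which is the stated binomial coefficient. No knowledge of the component structure of the ambient Mustafin special fibre is needed.
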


\begin{proof}
Recall that the Chow ring of $\mathbb{P}(V)^{n+1}$ is given by
\begin{equation}
\mathcal{A}=\faktor{\mathbb{Z}[H_0,\dots,H_n]}{\langle H_0^d,\dots,H_n^d\rangle},
\end{equation}
where $H_i$ is the hyperplane class of the $i-$th factor. The Chow-class of $X$ in the Chow ring $\faktor{\mathbb{Z}}{\langle H^d\rangle}$ of $\mathbb{P}(V)$ is given by $d'H^{\mathrm{codim}(X)}$, where $H$ is the hyperplane class in $\mathbb{P}(V)$. Under the diagonal embedding $\Delta:\mathbb{P}(V)\to\mathbb{P}(V)^{n+1}$, the class $H^{d-l}$ pushes forward to
\begin{equation}
\sum_{\substack{\underline{m}=(m_0,\dots,m_n)\\\sum m_i=(n+1)(d-1)-l}}\prod_{i=0}^nH_i^{m_i}.
\end{equation}
Thus, the Chow class of $\Delta(X)$ is given by 
\begin{equation}
\label{equ:chow}
d'\cdot\sum_{\substack{\underline{m}=(m_0,\dots,m_n)\\\sum m_i=(n+1)(d-1)-\mathrm{dim}(X)}}\prod_{i=0}^nH_i^{m_i}.
\end{equation}
This is also the Chow class of $X(\Gamma_{\underline{a},\underline{n}})_K$, as it is a linear transformation of $\Delta(X)$. As $X(\Gamma_{\underline{a},\underline{n}})_k$ is the specialisation of $X(\Gamma_{\underline{a},\underline{n}})_K$, their Chow classes agree (see the discussion prior to \cite[corollary 20.3]{fulton2013intersection}). The Chow class of $X(\Gamma_{\underline{a},\underline{n}})_k$ is the sum of the Chow classes of its irreducible components. As all these classes are effective, we see that we may have at most $d'$ times the number of monomials in \cref{equ:chow}. The number of those monomials is easily seen to be
\begin{equation}
\binom{n+\mathrm{dim}(X)}{\mathrm{dim}(X)-1},
\end{equation}
which completes the proof.
\end{proof}

Before giving the proof of \cref{thm-main1} in \cref{sec:proof}, we first prove some technical lemmata and introduce a suitable formal set-up

\subsection{Technical lemmata}
\label{sec-tech}
We begin with a general consideration of how Groebner bases of ideals behave after substitution. Let $\cA$ be a noetherian domain and $A_1,\dots,A_m,x_1,\dots,x_n$ be formal variables. We further define
\begin{equation}
\mathfrak{A}=\cA[A_1,\dots,A_m].
\end{equation}

Let $\mathfrak{I}\subset\mathfrak{\cA}[x_1,\dots,x_n]$ be an ideal and let $\mathfrak{G}$ be a Groebner basis of $\mathfrak{I}$. We further pick $\underline{a}=(a_1,\dots,a_m)\in \cA^m$ and consider the homomorphism
\begin{equation}
\mathrm{Subst}_{\underline{a}}:\mathfrak{A}[x_1,\dots,x_n]\to \cA[x_1,\dots,x_n]
\end{equation}
induced by $A_i\mapsto a_i$. It is an interesting question for which choices of $\underline{a}$, we have that $\mathrm{Subst}_{\underline{a}}(\fG)$ is a Groebner basis of $\mathrm{Subst}_{\underline{a}}(\fI)$. When $\cA$ is a field this is a well-studied topic and has lead to the notion of comprehensive Gröbner bases \cite{weispfenning1992comprehensive,weispfenning2003canonical}.

The following lemma gives a sufficient criterion when $\cA$ is a unique factorisation domain.

\begin{lemma}
\label{lem-critgroeb}
Let $\cA$ be a unique factorisation domain and $\mathfrak{I}$ as above. Let $G=\{f_1,\dots,f_l\}$ be a Groebner basis of $\fI$ with respect to some order $<$. Then, there exist two finite sets of non-zero polynomials $(\mathfrak{P}_i)_{i=1,\dots,l},(\tilde{\mathfrak{P}}_j)_{j=1,\dots,\mu}\in\mathfrak{A}$ for some $\mu>0$, such that the following holds: 
\begin{enumerate}
\item If $\alpha\in \cA$, such that $\alpha$ is a factor of any $\mathfrak{P}_i$, then $\alpha$ is a unit. In other words, each $\mathfrak{P}_i$ is saturated with respect to any non-unit in $\cA$,
\item for all $\underline{a}\in \cA^m$ for which $\mathrm{Subst}_{\underline{a}}(\mathfrak{P}_i)$ is a unit for all $i=1,\dots,l$ and $\mathrm{Subst}_{\underline{a}}(\tilde{\mathfrak{P}}_j)\neq0$ for all $j=1,\dots,\mu$, we have $\mathrm{Subst}_{\underline{a}}(G)$ is a Groebner basis of $\mathrm{Subst}_{\underline{a}}(\mathfrak{I})$m
\item for all $i$, we have $\mathrm{Subst}_{\underline{a}}(\mathrm{lt}(f_i)=\mathrm{lt}(\mathrm{Subst}_{\underline{a}}(f_i))$.
\end{enumerate}
\end{lemma}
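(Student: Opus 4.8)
The plan is to unwind the Buchberger-type criterion of \cref{thm-groe} and track which polynomial conditions on $\underline{a}$ guarantee that the reductions of the $S$-polynomials (really, the elements $\sum v_i g_i$ attached to a finite generating set $B$ of $\Syz(\mathrm{lt}(f_1),\dots,\mathrm{lt}(f_l))$) continue to reduce to zero after substitution. Since $\mathfrak{A} = \cA[A_1,\dots,A_m]$ is again a noetherian domain (in fact, since $\cA$ is a UFD, so is $\mathfrak{A}$), Groebner bases over $\mathfrak{A}$ exist by \cref{thm-groe}. First I would record, for each $f_i$, the leading coefficient $\mathrm{lc}(f_i) \in \mathfrak{A}$ and observe that for $\mathrm{Subst}_{\underline{a}}$ to preserve leading terms — condition (3) — it suffices that $\mathrm{Subst}_{\underline{a}}(\mathrm{lc}(f_i)) \neq 0$ for each $i$; these $\mathrm{lc}(f_i)$ will be among the $\tilde{\mathfrak{P}}_j$. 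For condition (1), I would take each $\mathfrak{P}_i$ to be the \emph{primitive part} of some distinguished polynomial in $\mathfrak{A}$ (dividing out the content, i.e.\ the gcd of the coefficients in $\cA$, which is well-defined up to units because $\cA$ is a UFD); this is exactly what ``saturated with respect to any non-unit'' means, and it is the reason the UFD hypothesis is needed.

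The core of the argument is condition (2). For each generator $(v_1,\dots,v_l) \in B$, \cref{thm-groe} guarantees a reduction chain $\sum v_i f_i \xrightarrow{G}_+ 0$ over $\mathfrak{A}[x_1,\dots,x_n]$. I would fix such a chain once and for all, and examine each one-step reduction $h \xrightarrow{G} h'$ in it. Each such step requires, per the definition of one-step reduction, that certain $\mathrm{lc}(f_j)$'s divide $\mathrm{lc}(h)$ and that a linear combination $\sum_j c_j \mathrm{lc}(f_j) = \mathrm{lc}(h)$ exists with $c_j \in \mathfrak{A}$. The key point is that the divisibility and the solvability of this linear equation are witnessed by polynomial identities in $\mathfrak{A}$ (cofactors, and the coefficient $\mathrm{coeff}(h,\text{leading monomial})$ itself), and applying $\mathrm{Subst}_{\underline{a}}$ to these identities shows that the \emph{same} reduction step is valid after substitution \emph{provided} that no leading term collapses along the way. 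So I would collect, into the family $(\tilde{\mathfrak{P}}_j)$, all the leading coefficients $\mathrm{lc}(f_i)$ together with the leading coefficient of every intermediate polynomial $h$ appearing in every one of the fixed reduction chains (and similarly the leading coefficients needed to witness that $\mathrm{Subst}_{\underline{a}}(\mathfrak{G})$ still generates the substituted ideal, using that $G \cap A$-type arguments and the Hilbert-polynomial bookkeeping of \cref{rem-step} are not needed here — only that $\mathrm{Subst}_{\underline{a}}(\mathfrak{I})$ is generated by $\mathrm{Subst}_{\underline{a}}(G)$, which follows since the $f_i$ generate $\mathfrak{I}$). The $\mathfrak{P}_i$ then arise as the primitive parts of a further finite list of polynomials — essentially the "denominators" one must clear to make the cofactors $c_j$ actually lie in $\cA$ after substitution rather than in a localization; requiring $\mathrm{Subst}_{\underline{a}}(\mathfrak{P}_i)$ to be a unit kills these denominators. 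Once all these nonvanishing/unit conditions hold, the fixed reduction chains descend verbatim under $\mathrm{Subst}_{\underline{a}}$, so $\sum \mathrm{Subst}_{\underline{a}}(v_i)\mathrm{Subst}_{\underline{a}}(f_i) \xrightarrow{\mathrm{Subst}_{\underline{a}}(G)}_+ 0$ for a generating set of the relevant syzygy module over $\cA$ (here one also uses that $\mathrm{Subst}_{\underline{a}}(B)$ generates $\Syz(\mathrm{lt}(\mathrm{Subst}_{\underline{a}}(f_1)),\dots)$ once leading terms are preserved), and \cref{thm-groe} applied over $\cA$ yields that $\mathrm{Subst}_{\underline{a}}(G)$ is a Groebner basis of $\mathrm{Subst}_{\underline{a}}(\mathfrak{I})$.

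The main obstacle I anticipate is the bookkeeping around the fact that, over a non-field coefficient ring, reduction is genuinely more delicate than in the field case: a single one-step reduction already involves a choice of the $c_j$ solving $\sum c_j \mathrm{lc}(f_j) = \mathrm{lc}(h)$, and these $c_j$ are only unique up to the syzygy module of the leading coefficients, so one must argue that \emph{some} consistent choice can be made functorially in $\underline{a}$. The clean way around this is to fix, at the level of $\mathfrak{A}$, one explicit reduction chain for each $b \in B$, thereby fixing all the $c_j$'s as specific elements of $\mathfrak{A}$ (or of a localization of $\mathfrak{A}$ at finitely many polynomials, whose primitive parts become the $\mathfrak{P}_i$), and then simply substitute; there is no need to re-solve anything over $\cA$. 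A secondary subtlety is making sure the list of ``bad'' polynomials is genuinely finite — but this follows because $B$ is finite, each reduction chain is finite, and each one-step reduction contributes only finitely many polynomial witnesses, so $\mu$ is finite and the $\mathfrak{P}_i$ are finite in number. I would then state condition (3) last as an immediate consequence of having included each $\mathrm{lc}(f_i)$ among the $\tilde{\mathfrak{P}}_j$.
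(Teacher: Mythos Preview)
Your overall architecture is right --- use \cref{thm-groe}, fix one reduction chain per syzygy generator, and collect the finitely many leading coefficients whose survival under $\mathrm{Subst}_{\underline{a}}$ forces the chain to descend. That part matches the paper's proof. But there is a genuine gap at the step you flag only parenthetically: the claim that ``$\mathrm{Subst}_{\underline{a}}(B)$ generates $\Syz(\mathrm{lt}(\mathrm{Subst}_{\underline{a}}(f_1)),\dots)$ once leading terms are preserved'' is not true in general, and your description of the $\mathfrak{P}_i$ does not address it. For a toy illustration, if $\mathrm{lt}(f_1)=A_1x$ and $\mathrm{lt}(f_2)=A_2x$, then over $\mathfrak{A}$ the syzygy module is generated by $(A_2,-A_1)$, but after substituting $A_i\mapsto a_i$ with $\gcd(a_1,a_2)=g$ a non-unit, the syzygy module over $\cA$ is generated by $(a_2/g,-a_1/g)$, which is \emph{not} in the $\cA[x]$-span of $(a_2,-a_1)$. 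So verifying Buchberger only on $\mathrm{Subst}_{\underline{a}}(B)$ is insufficient. Relatedly, your explanation of the $\mathfrak{P}_i$ as ``denominators one must clear to make the cofactors $c_j$ lie in $\cA$'' is off: the $c_j$ are chosen in $\mathfrak{A}$ and substitute directly into $\cA$, so no clearing is needed there.

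The paper's fix is precisely to neutralise this syzygy problem. It takes $\mathfrak{P}_i$ to be the product of the irreducible factors of $\mathrm{lc}(f_i)$ that do \emph{not} lie in $\cA$ (this is the ``primitive part'' idea, but applied to the leading coefficients, not to the cofactors), then localises at the multiplicative set $S$ they generate and sets $\tilde{f}_i=f_i/\mathfrak{P}_i$. The point of this move is that now $\mathrm{lt}(\tilde{f}_i)\in\cA[x_1,\dots,x_n]$, i.e.\ the leading terms are \emph{constant} in the parameters. Consequently the syzygy module is the same before and after substitution (indeed $\mathrm{Syz}_{\cA}\subset\mathrm{Syz}_{\mathfrak{A}_{(S)}}$ via the inclusion of rings, and one may choose $B$ already in $\cA[x]^l$), so Buchberger over $\cA$ is checked on exactly the same $B$ for which the reduction chains were fixed over $\mathfrak{A}_{(S)}$. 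The $\tilde{\mathfrak{P}}_j$ are then the numerators of the leading coefficients of the intermediate polynomials $h^{(p)}_j$ in those chains, and requiring $\mathrm{Subst}_{\underline{a}}(\mathfrak{P}_i)\in\cA^\times$ is exactly what makes $\mathrm{Subst}_{\underline{a},(S)}$ well-defined. Once you plug this localisation step in, the rest of your outline goes through.
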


\begin{proof}
As $\cA$ is a unique factorisation domain, the ring $\cA[A_1,\dots,A_m]$ is a unique factorisation domain as well. We therefore may consider the following factorisations into irreducible elements
\begin{equation}
\mathrm{lt}(f_i)=\prod_{j=1}^{s_i}p_j^{(i)}.
\end{equation}
We may assume that for suitable $t_i$, we have $p_{1},\dots,p_{t_i}\notin \cA$ and $p_{t_i+1},\dots,p_{s_i}\in \cA$. We then set $\mathfrak{P}_i=\prod_{j=1}^{t_i}p_j^{(i)}$ for $i=1,\dots,l$. Thus, the polynomial $\mathfrak{P}_i$ is saturated with respect to any non-unit of $\cA$.\\
Let $S$ be the multiplicative closure of $(\mathfrak{P}_i)_{i=1,\dots,l}$ and consider the localisation $\mathfrak{A}[(x_i)]_{(S)}$. Then, it follows immediatley from \cref{def-groe} that for $\tilde{f}_i=\frac{f_i}{\mathfrak{P}_i}$, we have that
\begin{equation}
\tilde{G}=\{\tilde{f_1},\dots,\tilde{f}_m\}
\end{equation}
is a Groebner basis of $\mathfrak{I}_{(S)}\subset\mathfrak{A}[(x_i)_i]_{(S)}$. We now consider the homomorphism
\begin{equation}
\mathrm{Subst}_{\underline{a},(S)}:\mathfrak{U}[x_1,\dots,x_n]_{(S)}\to\mathcal{A}[x_1,\dots,x_n]
\end{equation}
induced by $A_i\mapsto a_i$. This is well-defined whenever $\mathrm{Subst}_{\underline{a}}(\mathfrak{P}_i)$ is a unit for all $i=1,\dots,l$. In particular, we then have
\begin{equation}
\left(\mathrm{Subst}_{\underline{a},(S)}\right)_{|\mathfrak{A}[x_1,\dots,x_n]}\equiv\mathrm{Subst}_{\underline{a}}
\end{equation}
and $\mathrm{Subst}_{\underline{a},(S)}(\mathfrak{I}_(S))=\mathrm{Subst}_{\underline{a}}(\mathfrak{I})$. Since $\mathrm{lt}(\tilde{f}_i)\in\mathcal{A}[x_1,\dots,x_n]$, we observe that
\begin{equation}
\mathrm{Syz}(\mathrm{lt}(\mathrm{Subst}_{\underline{a},(S)}(\tilde{f}_1)),\dots,\mathrm{lt}(\mathrm{Subst}_{\underline{a},(S)}(\tilde{f}_n))
\subset \mathrm{Syz}(\mathrm{lt}(\tilde{f_1}),\dots,\mathrm{lt}(\tilde{f}_m)).
\end{equation}
Let $B=(\underline{b}^{(1)},\dots,\underline{b}^{(\alpha)})$ be a finite generating set of $\mathrm{Syz}(\mathrm{lt}(\tilde{f}_1,\dots,\mathrm{lt}(\tilde{f}_n))$. As $\tilde{G}$ is Groebner basis of $\mathfrak{I}_{(S)}$, by \cref{thm-groe} there exists for any $\underline{b}^{p}=(b_1^{(p)},\dots,b_n^{(p)})\in B$ with $p\in\{1,\dots,p\}$ a set of polynomials $h_1^{(p)},\dots,h_{r_p}^{(p)}$, such that
\begin{equation}
\sum_{j=1}^{b_n^{(p)}} b_j^{(p)}\tilde{f}_j\xrightarrow{\tilde{G}}h_1^{(p)}\xrightarrow{\tilde{G}} \dots\xrightarrow{\tilde{G}} h_{r_p}^{(p)}\xrightarrow{E} 0. 
\end{equation}
If $\underline{a}$ is such that $\mathrm{Subst}_{\underline{a},(S)}\mathrm{lt}(h_j^{(p)})\neq0$ for all $j$ and $p$, this immediately implies
\begin{align}
\sum_{j=1}^{b_n^{(p)}} b_j^{(p)}\mathrm{Subst}_{\underline{a},(S)}(\tilde{f}_j)&\xrightarrow{\mathrm{Subst}_{\underline{a},(S)}(\tilde{G})}\mathrm{Subst}_{\underline{a},(S)}(h_1^{(p)})\xrightarrow{\mathrm{Subst}_{\underline{a},(S)}(\tilde{G})} \dots\\
\dots &\xrightarrow{\mathrm{Subst}_{\underline{a},(S)}(\tilde{G})} \mathrm{Subst}_{\underline{a},(S)}((h_{r_p}^{(p)})\xrightarrow{\mathrm{Subst}_{\underline{a},(S)}(\tilde{G})} 0. 
\end{align}
Thus, it follows from \cref{thm-groe} that when $\tilde{G}$ is a Groebner basis of $\mathfrak{I}_{(S)}$, then $\mathrm{Subst}_{\underline{a},(S)}(\tilde{G})$ is a Groebner basis of $\mathrm{Subst}_{\underline{a},S}(\mathfrak{I})=\mathrm{Subst}_{\underline{a}}(\mathfrak{I})$. Finally, we see immediately that -- since $\mathrm{Subst}_{\underline{a},(S)}(\tilde{G})$ and $\mathrm{Subst}_{\underline{a}}(\mathfrak{I})$ only differ by multiplication of units -- $\mathrm{Subst}_{\underline{a},(S)}(\tilde{G})$ is a Groebner basis  if and only if $\mathrm{Subst}_{\underline{a}}(G)$ is as well. Setting $(\tilde{\mathfrak{P}}_j)_{j=1,\dots,\mu}$ the nominators of $(\mathrm{lt}(h_j^{(p)}))_{\substack{p=1,\dots,\alpha\\j=1,\dots,r_p}}$, this finishes the proof.
\end{proof}

The following is an immediate corollary of \cref{lem-critgroeb} and vital for our proof of \cref{conj-must}. We define $\mathfrak{I}=\mathrm{sat}(\tilde{\mathfrak{I}},a)$, i.e. the saturation of $\tilde{\mathfrak{I}}$ with respect to $a$. It is well-known that for a formal variable $y$, we have
\begin{equation}
\mathfrak{I}=\left\langle\tilde{\mathfrak{I}},1-ay\right\rangle_{\mathfrak{A}[(x_{i})_i,y]}\cap\mathfrak{A}[(x_{i})_i].
\end{equation}

We have the following corollary.

\begin{corollary}
\label{lem-redcrit}
Let $\cA$ be a unique factorisation domain and $\tilde{\mathfrak{I}}\subset\mathfrak{\cA}[x_1,\dots,x_n]$ an ideal. Further, let $a\in \mathfrak{U}$ and $G=\{f_1,\dots,f_l\}$ be a Groebner basis of $\mathfrak{I}=(\tilde{\mathfrak{I}},1-ay)$ with respect to some lexicographic order $<_{lex}$, where $y$ is the biggest element in $\{(x_i)_i,y\}$. It is well-known that $G\cap \mathfrak{A}[x_1,\dots,x_n]$ is a Groebner basis of $\mathrm{sat}(\tilde{\mathfrak{I}},a)$.\\
Furthermore, there exist two finite sets of non-zero polynomials $(\mathfrak{P}_i)_{i=1,\dots,l},(\tilde{\mathfrak{P}}_j)_{j=1,\dots,\mu}\in\mathfrak{A}$ for some $\mu>0$, such that the following holds: 
\begin{enumerate}
\item If $\alpha\in \cA$, such that $\alpha$ is a factor of any $\mathfrak{P}_i$, then $\alpha$ is a unit. In other words, each $\mathfrak{P}_i$ is saturated with respect to any non-unit in $\cA$,
\item for all $\underline{a}\in \cA^m$ for which $\mathrm{Subst}_{\underline{a}}(\mathfrak{P}_i)$ is a unit for all $i=1,\dots,l$ and $\mathrm{Subst}_{\underline{a}}(\tilde{\mathfrak{P}}_j)\neq0$ for all $j=1,\dots,\mu$, we have $\mathrm{Subst}_{\underline{a}}(G)\cap \cA[x_1,\dots,x_n]$ is a Grobner basis of $\mathrm{sat}(\mathrm{Subst}_{\underline{a}}(\tilde{\mathfrak{I}}))$.
\end{enumerate}
In particular, we have
\begin{equation}
\label{equ-ideq}
\mathrm{Subst}_{\underline{a}}(\mathrm{sat}(\mathfrak{\tilde{I}},\mathrm{Subst}_{\underline{a}}(a)))=\mathrm{Subst}_{\underline{a}}(\mathfrak{I})\cap\cA[x_1,\dots,x_n]=\mathrm{sat}(\mathrm{Subst}(\mathfrak{\tilde{I}}),\mathrm{Subst}_{\underline{a}}(a)).
\end{equation}
We note that under those conditions, we have that $\mathrm{Subst}_{\underline{a}}(\mathrm{sat}(\mathfrak{\tilde{I}},\mathrm{Subst}_{\underline{a}}(a)))$ is saturated with respect to $a$.
\end{corollary}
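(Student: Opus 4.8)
The plan is to deduce \cref{lem-redcrit} directly from \cref{lem-critgroeb} applied to the ideal $\mathfrak{I} = \langle \tilde{\mathfrak{I}}, 1-ay\rangle \subset \mathfrak{A}[x_1,\dots,x_n,y]$, while keeping careful track of how substitution interacts with intersecting against the subring $\mathfrak{A}[x_1,\dots,x_n]$. First I would observe that $\cA[A_1,\dots,A_m]$ is a unique factorisation domain since $\cA$ is, so \cref{lem-critgroeb} applies to $\mathfrak{I}$ with the lexicographic order $<_{lex}$ in which $y$ is the largest variable; this produces the two finite families $(\mathfrak{P}_i)$ and $(\tilde{\mathfrak{P}}_j)$ with properties (1)--(3), where each $\mathfrak{P}_i$ is saturated with respect to every non-unit of $\cA$. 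The point of choosing this particular order is the standard elimination fact already recalled in the excerpt: if $G$ is a Groebner basis of $\langle \tilde{\mathfrak{I}}, 1-ay\rangle$ with respect to $<_{lex}$ with $y$ largest, then $G\cap\mathfrak{A}[x_1,\dots,x_n]$ is a Groebner basis of $\mathrm{sat}(\tilde{\mathfrak{I}},a)$. So the families from \cref{lem-critgroeb} are exactly the ones we want.

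Next I would verify claim (2) of the corollary. Fix $\underline{a}\in\cA^m$ such that $\mathrm{Subst}_{\underline{a}}(\mathfrak{P}_i)$ is a unit for all $i$ and $\mathrm{Subst}_{\underline{a}}(\tilde{\mathfrak{P}}_j)\neq 0$ for all $j$. By \cref{lem-critgroeb}(2), $\mathrm{Subst}_{\underline{a}}(G)$ is a Groebner basis of $\mathrm{Subst}_{\underline{a}}(\mathfrak{I})$ in $\cA[x_1,\dots,x_n,y]$, and by \cref{lem-critgroeb}(3) leading terms are preserved under $\mathrm{Subst}_{\underline{a}}$. Since leading terms are preserved and the order is the elimination order with $y$ largest, the set $\mathrm{Subst}_{\underline{a}}(G)\cap\cA[x_1,\dots,x_n]$ is precisely $\mathrm{Subst}_{\underline{a}}(G\cap\mathfrak{A}[x_1,\dots,x_n])$ (an element of $\mathrm{Subst}_{\underline{a}}(G)$ lies in $\cA[x_1,\dots,x_n]$ iff its leading term, equivalently the original leading term, involves no $y$), and by the elimination property for $\mathrm{Subst}_{\underline{a}}(\mathfrak{I}) = \langle \mathrm{Subst}_{\underline{a}}(\tilde{\mathfrak{I}}), 1-\mathrm{Subst}_{\underline{a}}(a)y\rangle$ this is a Groebner basis of $\mathrm{sat}(\mathrm{Subst}_{\underline{a}}(\tilde{\mathfrak{I}}),\mathrm{Subst}_{\underline{a}}(a))$. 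That gives (2).

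For the displayed equality \cref{equ-ideq} I would chase the three descriptions: $\mathrm{sat}(\mathrm{Subst}_{\underline{a}}(\tilde{\mathfrak{I}}),\mathrm{Subst}_{\underline{a}}(a)) = \langle \mathrm{Subst}_{\underline{a}}(\tilde{\mathfrak{I}}), 1-\mathrm{Subst}_{\underline{a}}(a)y\rangle\cap\cA[x_1,\dots,x_n] = \mathrm{Subst}_{\underline{a}}(\mathfrak{I})\cap\cA[x_1,\dots,x_n]$, the first equality being the standard saturation-elimination identity and the second being just that $\mathrm{Subst}_{\underline{a}}$ commutes with forming the ideal $\langle-,1-ay\rangle$. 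On the other hand $\mathrm{Subst}_{\underline{a}}(\mathrm{sat}(\tilde{\mathfrak{I}},a))$ is generated by $\mathrm{Subst}_{\underline{a}}(G\cap\mathfrak{A}[x_1,\dots,x_n])$, which we just identified with $\mathrm{Subst}_{\underline{a}}(G)\cap\cA[x_1,\dots,x_n]$, a Groebner basis of $\mathrm{Subst}_{\underline{a}}(\mathfrak{I})\cap\cA[x_1,\dots,x_n]$; hence all three ideals coincide. The final sentence, that $\mathrm{Subst}_{\underline{a}}(\mathrm{sat}(\tilde{\mathfrak{I}},a))$ is saturated with respect to $a$, then follows because it equals $\mathrm{sat}(\mathrm{Subst}_{\underline{a}}(\tilde{\mathfrak{I}}),\mathrm{Subst}_{\underline{a}}(a))$, which is saturated with respect to $\mathrm{Subst}_{\underline{a}}(a)$ by construction (saturating twice changes nothing).

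The main obstacle I anticipate is the bookkeeping around the equality $\mathrm{Subst}_{\underline{a}}(G)\cap\cA[x_1,\dots,x_n] = \mathrm{Subst}_{\underline{a}}(G\cap\mathfrak{A}[x_1,\dots,x_n])$: the inclusion $\supseteq$ is trivial, but for $\subseteq$ one genuinely needs that $\mathrm{Subst}_{\underline{a}}$ does not accidentally kill the $y$-part of a generator of $G$, and this is exactly what \cref{lem-critgroeb}(3) (preservation of leading terms, hence of the variable $y$ appearing in the leading term) buys us, provided one also notes that in an elimination Groebner basis a generator lies in $\mathfrak{A}[x_1,\dots,x_n]$ iff its leading term does. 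Everything else is a direct transcription of \cref{lem-critgroeb} together with the well-known saturation-via-elimination formula already quoted in the text, so no new ideas are required beyond that one careful compatibility check.
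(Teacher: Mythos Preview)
Your proposal is correct and follows essentially the same route as the paper's proof: apply \cref{lem-critgroeb} to the ideal $\mathfrak{I}=\langle\tilde{\mathfrak{I}},1-ay\rangle$ in the lexicographic elimination order, then use the elimination property together with assertion (3) of \cref{lem-critgroeb} to identify $\mathrm{Subst}_{\underline{a}}(G\cap\mathfrak{A}[x_1,\dots,x_n])$ with $\mathrm{Subst}_{\underline{a}}(G)\cap\cA[x_1,\dots,x_n]$. The paper's proof is terser but relies on exactly the same compatibility check you single out as the main obstacle; your treatment of the displayed equality \cref{equ-ideq} and of the final saturation remark is simply more explicit than the paper's ``the corollary follows.''
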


\begin{proof}
Let $G$ be as stated in the lemma. It is well-known that $\tilde{G}=G\cap\mathfrak{A}[(x_{i})]$ is a Groebner basis of $\mathfrak{I}=\mathrm{sat}(\tilde{\mathfrak{I}},a)$ with respect to the lexicographic order $<_{lex}$ restricted to $\mathfrak{A}[(x_{i})]$. By \cref{lem-critgroeb}, there exist polynomials $\mathfrak{P}_i,\tilde{\mathfrak{P}}_i$ as stated in the corollary, such that we have $\mathrm{Subst}_{\underline{a}}(G)$ is a Groebner basis of $\mathrm{Subst}_{\underline{a}}(\mathfrak{I})=\langle\mathrm{Subst}_{\underline{a}}(\tilde{\mathfrak{I}}),1-\mathrm{Subst}_{\underline{a}}(a)\rangle$ and $\mathrm{Subst}_{\underline{a}}(G)\cap \cA[(x_i)_i]$ is a Groebner basis of $\mathrm{sat}(\mathrm{Subst}_{\underline{a}}(\mathfrak{I}),\mathrm{Subst}_{\underline{a}}(a))$ whenever $\mathrm{Subst}_{\underline{a}}(\mathfrak{P}_i)$ is a unit and $\mathrm{Subst}_{\underline{a}}(\tilde{\mathfrak{P}}_i)\neq0$. Finally, we observe that by the third assertion in \cref{lem-critgroeb} we have $\mathrm{Subst}_{\underline{a}}(\tilde{G})=\mathrm{Subst}_{\underline{a}}(G)\cap \cA[(x_i)_i]$ and the corollary follows.
\end{proof}

We illustrate \cref{lem-redcrit} in the following example.

\begin{example}
Let $\cA=\mathbb{Z}_p$ the $p-$adic ring of integers and consider the ideal $I\subset\mathbb{Z}[A_1,A_2][x,y]$ given by $I=\langle pA_1x+A_2y\rangle$. For $a_1,a_2\in\mathbb{Z}_p$, we see that $\tilde{I}=\langle pa_1x+a_2y\rangle$ is saturated with respect to $\pi$ if and only if $a_2\in\mathbb{Z}_p^{\times}$. In order to see this in the flavour of \cref{lem-redcrit}, we consider the ideal $J=\langle pA_1x+A_2y,1-pz\rangle\subset\mathbb{Z}_p[A_1,A_2][x,y,z]$ and observe that
\begin{align}
&z\cdot\left(pA_1x+A_2y\right)+A_1x\cdot\left(1-pz\right)=A_1x+A_2yz\in J\\
\end{align}
Therefore $A_2yz\in \mathrm{Lt}_<(J)$ and it is easy to see that no proper factor of $A_2yz$ is contained in $\mathrm{Lt}_<(J)$. Therefore, any Gröbner basis of $J$ must contain a polynomial with leading term $A_2yz$. Thus, it follows that $\mathrm{Subst}_{\underline{a}}(A_2)$ must be invertible in order for \cref{lem-redcrit} to apply.
\end{example}

\subsection{Formal set-up}
We begin with a general set-up of ideals mirroring the ideal in \cref{equ:ideal} over arbitrary noetherian domains.\par
We first introduce our algebraic set-up: Let $\cA$ be a unique factorisation domain and $\left(A_{ij}^{(l)}\right)_{\substack{i,j=1,\dots,d\\l=0,\dots,n}}$ be formal variables. We set the polynomial ring 
\begin{equation}
\mathfrak{A}_{\cA}=\cA\left[\left(A_{ij}^{(l)}\right)_{\substack{i,j=1,\dots,d\\l=0,\dots,n}}\right]
\end{equation}
and for $a\in\mathfrak{A}$, we set the matrices
\begin{equation}
\mathfrak{M}_l=
\begin{pmatrix}
A_{11}^{(l)} & \hdots & A_{1d}^{(l)}\\
\vdots & \ddots & \vdots\\
A_{d1}^{(l)} & \hdots & A_{dd}^{(l)}
\end{pmatrix}\quad\textrm{and}\quad \mathfrak{g}_l=\mathfrak{M}_l\begin{pmatrix}
1\\
& a^{n_1}\\
 & & a^{n_2}\\
& & & \ddots\\
& & & & a^{n_{d-1}} 
\end{pmatrix}
\end{equation}
for $l=0,\dots,n$. We further consider the ideal
\begin{equation}
\tilde{\mathfrak{I}}_{\cA,a,n}=I_2\begin{pmatrix}
\mathfrak{g}_0\begin{pmatrix}
x_{10}\\
\vdots\\
x_{d0}
\end{pmatrix} &
\cdots
& \mathfrak{g}_n\begin{pmatrix}
x_{1n}\\
\vdots\\
x_{dn}
\end{pmatrix}
\end{pmatrix}\subset \mathfrak{A}_{\cA}\left[(x_{ij})_{\substack{i=1,\dots,d\\j=1,\dots,n}}\right]
\end{equation}
and define
\begin{equation}
\label{equ-idgen}
\mathfrak{I}_{\cA,a,n}\coloneqq\mathrm{sat}(\tilde{\mathfrak{I}}_{\cA,a,n},a).
\end{equation}
Let $G=(f_1,\dots,f_n)$ be a generating set of $\mathfrak{I}_n$ and consider
\begin{equation}
[f_i]\equiv f_i\,\mathrm{mod}\, a.
\end{equation}
Then $\{[f_1],\dots,[f_n]\}$ generates $\faktor{\mathfrak{I}_{\cA,a,n}}{(a)}$.

Let $\underline{a}=\left(a_{ij}^{(l)}\right)_{\substack{i,j=1,\dots,d\\l=0,\dots,n}}$ with $a_{ij}^{(l)}\in \cA$ and recall the homomorphism
\begin{equation}
\mathrm{Subst}_{\underline{a}}:\mathfrak{A}_{\cA}[(x_{ij})]\to \cA[(x_{ij})]
\end{equation}
induced by $A_{ij}^{(l)}\mapsto a_{ij}^{(l)}$. Further, let $\tilde{G}=(f_1,\dots,f_n)$ be a Groebner basis of $(\tilde{\mathfrak{I}},1-ay)$ and $\underline{a}$, such that \cref{lem-redcrit} applies. Then, we have $G=\tilde{G}\cap\mathfrak{A}_{\cA}[(x_{ij})])$ is a Groebner basis of $\mathfrak{I}_{\cA,a,n}$  and according to \cref{lem-redcrit}, we have $\mathrm{Subst}_{\underline{a}}(G)$ is a Groebner basis of 
\begin{equation}
\label{equ-genred}
\mathrm{Subst}_{\underline{a}}(\mathfrak{I}_{\cA,a,n})=\mathrm{sat}(\mathrm{Subst_{\underline{a}}}(\tilde{\mathfrak{I}}_{\cA,a,n}),\mathrm{Subst}_{\underline{a}}(a)).
\end{equation}
Recall that for $G=(h_1,\dots,h_m)$, $\tilde{h}_i=\mathrm{Subst}_{\underline{a}}(h_i)$ and $[\tilde{h}_i]\equiv \tilde{h}_i\,\mathrm{mod}\,\mathrm{Subst}_{\underline{a}}(a)$, we have $\left([\tilde{h}_1],\dots,[\tilde{h}_m]\right)$ generates $\faktor{\mathrm{Subst}_{\underline{a}}(\mathfrak{I})_{\cA,a,n}}{\left(\mathrm{Subst}_{\underline{a}}(a)\right)}$. In other words, we have
\begin{equation}
\mathrm{Subst}_{\underline{a}}\left(\faktor{\mathfrak{I}_{\cA,a,n}}{(a)}\right)=\faktor{\mathrm{Subst}_{\underline{a}}(\mathfrak{I}_{\cA,a,n})}{\left(\mathrm{Subst}_{\underline{a}}(a)\right)}=\faktor{\mathrm{sat}(\mathrm{Subst_{\underline{a}}}(\tilde{\mathfrak{I}}_{\cA,a,n}),\mathrm{Subst}_{\underline{a}}(a))}{\left(\mathrm{Subst}_{\underline{a}}(a)\right)}.
\end{equation}

For the rest of this section, we fix a non-archimedean field $K$ with ring of integers $\mathcal{O}_K$, uniformiser $\pi$ and residue field $k=\faktor{\mathcal{O}_K}{(\pi)}$. 

\begin{proposition}
\label{prop-submust}
There exists a general subset $U\subset\mathcal{O}_K^{d^2(n+1)}$, such that we have $\underline{a}\in\mathcal{O}_{K}^{d^2\cdot(n+1)}$ for all $\underline{a}\in U$. Then, we have
\begin{equation}
\mathrm{Subst}_{\underline{a}}(\mathfrak{I}_{O_{K},a,n})=I(\mathcal{M}(\Gamma_{\underline{a},\underline{n}})).
\end{equation}
In particular, by \cref{equ-genred} we have for $\underline{a}\in U$ that
\begin{equation}
\mathrm{Subst}_{\underline{a}}(\faktor{\mathfrak{I}_{O_{K},a,n}}{(\pi)})=I(\mathcal{M}(\Gamma_{\underline{a},\underline{n}})_k).
\end{equation}
\end{proposition}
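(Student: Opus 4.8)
The plan is to recognise $\mathfrak{I}_{\mathcal{O}_K,\pi,n}$ as a universal model of the defining ideal of a Mustafin variety and to transport it along $\mathrm{Subst}_{\underline{a}}$ by means of \cref{lem-redcrit}. The first, purely formal, step is to note that $\mathrm{Subst}_{\underline{a}}$ is a ring homomorphism which fixes $\pi$ and sends the generic matrices $\mathfrak{g}_l=\mathfrak{M}_l\,\mathrm{diag}(1,\pi^{n_1},\dots,\pi^{n_{d-1}})$ to the matrices $g_l=M_l\,\mathrm{diag}(1,\pi^{n_1},\dots,\pi^{n_{d-1}})$ of \cref{equ:matrices}, where $M_l=(a^{(l)}_{ij})_{i,j}$; hence it carries the generating $2\times 2$ minors of $\tilde{\mathfrak{I}}_{\mathcal{O}_K,\pi,n}$ onto those of $\mathfrak{a}\coloneqq I_2\bigl(g_0x_{\bullet 0}\mid\cdots\mid g_nx_{\bullet n}\bigr)$, so that $\mathrm{Subst}_{\underline{a}}(\tilde{\mathfrak{I}}_{\mathcal{O}_K,\pi,n})=\mathfrak{a}$. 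Since, by \cref{equ-idmust}, the ideal of $\mathcal{M}(\Gamma_{\underline{a},\underline{n}})$ is $\mathfrak{a}\,K[(x_{ij})]\cap\mathcal{O}_K[(x_{ij})]$, and this intersection equals $\mathrm{sat}(\mathfrak{a},\pi)$, we obtain $I(\mathcal{M}(\Gamma_{\underline{a},\underline{n}}))=\mathrm{sat}(\mathfrak{a},\pi)$ whenever $\Gamma_{\underline{a},\underline{n}}$ is defined, i.e.\ whenever each $M_l$ is invertible.

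Next I would invoke \cref{lem-redcrit} with $\cA=\mathcal{O}_K$ and $a=\pi$. It produces finite families of nonzero polynomials $(\mathfrak{P}_i)_i$ and $(\tilde{\mathfrak{P}}_j)_j$ in $\mathfrak{A}_{\mathcal{O}_K}$, each $\mathfrak{P}_i$ being saturated with respect to every non-unit of $\mathcal{O}_K$, with the property that whenever $\mathrm{Subst}_{\underline{a}}(\mathfrak{P}_i)$ is a unit for all $i$ and $\mathrm{Subst}_{\underline{a}}(\tilde{\mathfrak{P}}_j)\neq 0$ for all $j$, substitution commutes with $\pi$-saturation. Combining this with the previous paragraph gives, for such $\underline{a}$ (this is the content of \cref{equ-genred}),
\[
\mathrm{Subst}_{\underline{a}}(\mathfrak{I}_{\mathcal{O}_K,\pi,n})=\mathrm{Subst}_{\underline{a}}(\mathrm{sat}(\tilde{\mathfrak{I}}_{\mathcal{O}_K,\pi,n},\pi))=\mathrm{sat}(\mathrm{Subst}_{\underline{a}}(\tilde{\mathfrak{I}}_{\mathcal{O}_K,\pi,n}),\pi)=\mathrm{sat}(\mathfrak{a},\pi)=I(\mathcal{M}(\Gamma_{\underline{a},\underline{n}})),
\]
which is the first assertion. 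The second assertion is then immediate: assertion (3) of \cref{lem-critgroeb} guarantees that the leading terms of the relevant Groebner basis are preserved under $\mathrm{Subst}_{\underline{a}}$, so reduction modulo $\pi$ commutes with $\mathrm{Subst}_{\underline{a}}$, exactly as recorded just before the proposition; hence $\mathrm{Subst}_{\underline{a}}(\mathfrak{I}_{\mathcal{O}_K,\pi,n}/(\pi))=I(\mathcal{M}(\Gamma_{\underline{a},\underline{n}}))/(\pi)$, and since $\mathcal{M}(\Gamma_{\underline{a},\underline{n}})$ is flat over $\mathcal{O}_K$ with reduced special fibre \cite[Theorem 2.3]{cartwright2011mustafin}, the right-hand side is $I(\mathcal{M}(\Gamma_{\underline{a},\underline{n}})_k)$.

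Finally I would check that the conditions imposed on $\underline{a}$ carve out a general subset in the sense of \cref{rem:general}, i.e.\ the preimage under the reduction map $\mathcal{O}_K^{d^2(n+1)}\to\mathbb{A}_k^{d^2(n+1)}$ of a nonempty Zariski-open set. As $\mathfrak{P}_i$ is saturated with respect to the non-unit $\pi$, its reduction $\overline{\mathfrak{P}_i}$ is a nonzero polynomial over $k$, and $\mathrm{Subst}_{\underline{a}}(\mathfrak{P}_i)\in\mathcal{O}_K$ is a unit exactly when $\overline{\mathfrak{P}_i}$ does not vanish at $\overline{\underline{a}}$. For the $\tilde{\mathfrak{P}}_j$ I would first divide out the largest power of the prime $\pi$, writing $\tilde{\mathfrak{P}}_j=\pi^{e_j}\tilde{\mathfrak{Q}}_j$ with $\overline{\tilde{\mathfrak{Q}}_j}\neq 0$, so that $\mathrm{Subst}_{\underline{a}}(\tilde{\mathfrak{P}}_j)\neq 0$ as soon as $\overline{\tilde{\mathfrak{Q}}_j}(\overline{\underline{a}})\neq 0$. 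Adding the open condition $\det(\overline{M_l})\neq 0$ for all $l$ (so that $\Gamma_{\underline{a},\underline{n}}$ is defined), we take $U$ to be the preimage of the intersection of these finitely many nonempty Zariski-open subsets of the irreducible variety $\mathbb{A}_k^{d^2(n+1)}$, which is again nonempty and open. The main obstacle is bookkeeping rather than conceptual: one must notice that the $\tilde{\mathfrak{P}}_j$ may a priori be divisible by $\pi$, which is what forces the preliminary $\pi$-factorisation before one is in the framework of \cref{rem:general}; the substantive input, that $\mathrm{Subst}_{\underline{a}}$ commutes with $\pi$-saturation for general $\underline{a}$, has already been supplied by \cref{lem-critgroeb} and \cref{lem-redcrit}.
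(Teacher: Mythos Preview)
Your proof is correct and follows essentially the same route as the paper: identify $I(\mathcal{M}(\Gamma_{\underline{a},\underline{n}}))$ as the $\pi$-saturation of the specialized minor ideal, apply \cref{lem-redcrit} to commute $\mathrm{Subst}_{\underline{a}}$ with $\pi$-saturation, and then argue that the resulting conditions on $\underline{a}$ are general. The only cosmetic difference is your treatment of the $\tilde{\mathfrak{P}}_j$: you explicitly factor out the maximal power of $\pi$ to obtain a nonzero reduction, whereas the paper simply observes that $\tilde{\mathfrak{P}}_j\neq 0$ is a Zariski-open condition over $K$ and then invokes \cref{rem:general} to pass to a general subset of $\mathcal{O}_K^{d^2(n+1)}$; both arguments amount to the same thing.
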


\begin{proof}
We first recall that the ideal $I(\mathcal{M}(\Gamma_{\underline{a},\underline{n}}))$ may be viewed as the saturation of the ideal generated by the $2\times2$ minors  of
\begin{equation}
\label{equ-matrix}
\begin{pmatrix}
g_0\begin{pmatrix}
x_{10}\\
\vdots\\
x_{d0}
\end{pmatrix}
&
\hdots
&
g_n\begin{pmatrix}
x_{1n}\\
\vdots\\
x_{dn}
\end{pmatrix}
\end{pmatrix}
\end{equation}

over $\mathcal{O}_K$ with respect to the uniformiser $\pi$. In other words, denoting the $2\times2$ minors of this matrix by $f_1,\dots,f_{\gamma}$, we have
\begin{equation}
I(\mathcal{M}(\Gamma_{\underline{a},\underline{n}}))=\mathrm{sat}(\langle f_1,\dots,f_{\gamma}\rangle_{\mathcal{O}_K[(A_{ij}^{(l)})]},\pi).
\end{equation}

Furthermore, we denote the $2\times2$ minors of
\begin{equation}
\begin{pmatrix}
\mathfrak{g}_0\begin{pmatrix}
x_{10}\\
\vdots\\
x_{d0}
\end{pmatrix}
&
\hdots
&
\mathfrak{g}_n\begin{pmatrix}
x_{1n}\\
\vdots\\
x_{dn}
\end{pmatrix}
\end{pmatrix}
\end{equation}

by $(\mathfrak{f}_1,\dots,\mathfrak{f}_\gamma)$ and observe that -- up to reordering -- that $\mathrm{Subst}_{\underline{a}}(\mathfrak{f}_i)=f_i$. Therefore, we have
\begin{equation}
\mathrm{Subst}_{\underline{a}}(\mathfrak{I}_{\mathcal{O}_{K,\pi,n}})=\mathrm{Subst}_{\underline{a}}\left(\mathrm{sat}(\langle\mathfrak{f}_1,\dots,\mathfrak{f}_\gamma\rangle_{\mathcal{O}_{K}[(A_{ij}^{(l)}]},\pi)\right).
\end{equation}

 According to \cref{lem-redcrit}, there are finitely many polynomials $(\mathfrak{P}_\alpha)_\alpha$ and $(\tilde{\mathfrak{P}}_\beta)_\alpha$ in $\mathcal{O}_K[(A_{ij}^{(l)})]$, where $\mathfrak{P}_\alpha$ is saturated with respect to $\pi$ and such that if $\mathrm{Subst}_{\underline{a}}(\mathfrak{P}_{\alpha})\in\mathcal{O}_K^\times$ and $\mathrm{Subst}_{\underline{a}}(\tilde{\mathfrak{P}}_\beta)\neq0$, then 
 
 \begin{equation}
 \label{equ:relevantstep}
 \mathrm{Subst}_{\underline{a}}\left(\mathrm{sat}(\langle\mathfrak{f}_1,\dots,\mathfrak{f}_\gamma\rangle_{\mathcal{O}_{K_n}[(A_{ij}^{(l)}]},\pi)\right)=\mathrm{sat}\left(\mathrm{Subst}_{\underline{a}}(\langle\mathfrak{f}_1,\dots,\mathfrak{f}_\gamma\rangle_{\mathcal{O}_{K_n}[A_{ij}^{(l)}]},\pi\right).
 \end{equation}
 
However, as all $\mathfrak{P}_{\alpha}$ are saturated with respect to $\pi$, we have

\begin{equation}
\mathfrak{P}_{\alpha}\,\mathrm{mod}\,\pi\not\equiv0
\end{equation}
for all alpha. Therefore, $\mathrm{Subst}_{\underline{a}}(\mathfrak{P}_{\alpha})\in\mathcal{O}_{K_n}^\times$ is a general condition. In other words, the set of $\underline{a}$, such that $\mathfrak{P}_{\alpha}(\underline{a})\,\mathrm{mod}\,\pi\not\equiv0$ is a non-empty general subset $U_{\alpha}$ of $\mathcal{O}_K^{d^2(n+1)}$.
 Moreover, the condition that $\tilde{\mathfrak{P}}_{\beta}\neq0$ for all $\beta$ is a generic condition over $K$. As noted in \cref{rem:general}, any non-empty generic set over $K_n$ contains a non-empty general subset. Thus, for any $\beta$, there exists a non-empty general subet $\tilde{U}_{\beta}$ of $\mathcal{O}_K^{d^2(n+1)}$, such that $\tilde{\mathfrak{P}}_\beta(\underline{a})\neq0$ for all $\underline{\beta}\in \tilde{U}_{\beta}$. As the intersection of finitely many non-empty general subsets is non-empty, we have that $\mathrm{Subst}_{\underline{a}}(\mathfrak{P}_{\alpha})\in\mathcal{O}_{K}^\times$ and  $\mathrm{Subst}_{\underline{a}}(\tilde{\mathfrak{P}}_{\beta})\neq0$ are true for $\underline{a}$ in in the non-empty general subset $U=\left(\bigcap U_{\alpha}\right)\cap\left(\bigcap \tilde{U}_\beta\right)$. Therefore, \cref{equ:relevantstep} holds. We may summarise the above considerations in the following equation
\begin{align}
&\mathrm{Subst}_{\underline{a}}(\mathfrak{I}_{\mathcal{O}_{K_n,\pi,n}})=\mathrm{Subst}_{\underline{a}}\left(\mathrm{sat}(\langle\mathfrak{f}_1,\dots,\mathfrak{f}_\gamma\rangle_{\mathcal{O}_{K_n}[(A_{ij}^{(l)}]},\pi)\right)=\mathrm{sat}\left(\mathrm{Subst}_{\underline{a}}(\langle\mathfrak{f}_1,\dots,\mathfrak{f}_\gamma\rangle_{\mathcal{O}_{K_n}[A_{ij}^{(l)}]},\pi\right)\\
&=\mathrm{sat}\left(\langle f_1,\dots,f_\gamma\rangle_{\mathcal{O}_{K_n}},\pi\right)=I(\mathcal{M}(\Gamma_{\underline{a},\underline{n}})),
\end{align}
which completes the proof.
\end{proof}

Let $y_1,\dots,y_d$ be the coordinates of $\mathbb{P}(V)$ and let $X\subset\mathbb{P}(V)$ be a geometrically irreducible variety with $X=V(I')$ for an ideal $I'\subset K[y_i]$, such that $\mathrm{dim}(X)=\mathrm{dim}(X\times_K\mathrm{Spec}(\overline{K}))$, i.e. $X$ is dense in its base change to the algebraic closure of $K$.\\
We consider the map
\begin{equation}
\tilde{f}_{\Gamma_{\underline{a},\underline{n}}}:\mathbb{P(V)}\xrightarrow{g_0^{-1}\times\dots\times g_n^{-1}}\mathbb{P}(V)^{n+1}.
\end{equation}
We may compute $\overline{\tilde{f}_{\Gamma_{\underline{a},\underline{n}}}(X)}$ as follows. Let $(\alpha_{i})_{i=0,\dots,n}$ be a vector of formal variables, $m_{ij}$ the $i-$th entry of $g_j^{-1}\cdot\underline{y}$ and set the ideal $I''=\left\langle I, (\alpha_{j}x_{ij}-m_{ij})_{i,j}\right\rangle_{K[x_{ij},\alpha_{i},y_l]}$. Then, it is well-known that -- since $X$ is Zariski dense in its pullback to the algebraic closure of $K$ -- for 
\begin{equation}
\label{must:deg}
\tilde{I}=(\mathrm{sat}(I''\cap K[x_{ij},\alpha_i]),\langle (\alpha_i)_i\rangle)\cap K[x_{ij}], 
\end{equation}
we have $V(\tilde{I})=\overline{\tilde{f}_{\Gamma_{\underline{a},\underline{n}}}(X)}$. Furthermore, we may choose  a set of generators of $(h_1,\dots,h_\nu)$ of $\tilde{I}$, such that $h_i\in\mathcal{O}_K[x_{ij}]$. Then, we define $I=\mathrm{sat}(\langle h_1,\dots,h_\nu\rangle,\pi)$, observe that $I=\tilde{I}\cap \mathcal{O}_K[x_{ij}]$ and thus we obtain that $I$ and $I(X(\Gamma_{\underline{a},\underline{n}}))$ define the same topological subspace of $\mathbb{P}(L)^{n+1}$.\\
Now, let $\mathfrak{m}_{ij}$ denote the $i-$th entry of $\mathfrak{g}_j^{-1}\cdot\underline{y}$ and set the ideal $\mathfrak{L}'=\left\langle I', (\alpha_jx_{ij}-\mathfrak{m}_{ij})_{i,j}\right\rangle_{K[A_{ij}^{(l)}][x_{ij},y_l]}$. We then set 
\begin{equation}
\label{must:deg1}
\tilde{\mathfrak{L}}=(\mathrm{sat}(\mathfrak{L}'\cap K[A_{ij}^{(l)}][x_{ij},\alpha_i]),\langle (\alpha_i)_i\rangle)\cap K[A_{ij}^{(l)}][x_{ij}].
\end{equation}
We may choose generators $\mathfrak{h}_1,\dots,\mathfrak{h}_\gamma$ of $\tilde{\mathfrak{L}}$, such that $\mathfrak{h}_i\in \mathcal{O}_K[A_{ij}^{(l)}][x_{ij}]$ and we define
\begin{equation}
\label{equ:key}
\mathfrak{L}=\mathrm{sat}(\langle \mathfrak{h}_1,\dots,\mathfrak{h}_\gamma\rangle,\pi.)
\end{equation}

To end this subsection, we prove the following corollary.

\begin{corollary}
\label{cor-subvar}
Let $X$ be a geometrically irreducible variety, let $\tilde{I}$ be the associated ideal in \cref{must:deg} and $\mathfrak{L}$ the associated ideal in \cref{must:deg1}. There exists a general subset $U\subset\mathcal{O}_K^{d^2(n+1)}$, such that $\mathrm{Subst}_{\underline{a}}(\mathfrak{L})$ and $I(X(\Gamma_{\underline{a},\underline{n}}))$ cut out the same topological subspace of $\mathbb{P}(L)^n$ for all $\underline{a}\in U$. In particular, we have that $\mathrm{Subst}_{\underline{a}}(\mathfrak{L})\,\mathrm{mod}\,\pi$ cuts out the topological space underlying $X(\Gamma_{\underline{a},\underline{n}})_k$.
\end{corollary}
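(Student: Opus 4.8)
The plan is to mirror the proof of \cref{prop-submust}, replacing the ideal $\mathfrak{I}_{\mathcal{O}_K,\pi,n}$ by $\mathfrak{L}$ and tracking where the density hypothesis on $X$ enters. Recall from \cref{must:deg1} and \cref{equ:key} that $\mathfrak{L}=\mathrm{sat}(\langle\mathfrak{h}_1,\dots,\mathfrak{h}_\gamma\rangle,\pi)$ with $\mathfrak{h}_i\in\mathcal{O}_K[A_{ij}^{(l)}][x_{ij}]$ generating $\tilde{\mathfrak{L}}$ over $K[A_{ij}^{(l)}]$, while on the other side $I(X(\Gamma_{\underline a,\underline n}))$ has, as a topological space, the same vanishing locus as $I=\mathrm{sat}(\langle h_1,\dots,h_\nu\rangle,\pi)$, where $\langle h_1,\dots,h_\nu\rangle\cdot K=\tilde I$ and $V(\tilde I)=\overline{\tilde f_{\Gamma_{\underline a,\underline n}}(X)}$ inside the generic fibre $\mathbb{P}(L)^n_K$ (\cref{must:deg}). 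Since for any ideal $J$ the locus $V(\mathrm{sat}(J,\pi))$ is the Zariski closure in the $\mathcal{O}_K$-model $\mathbb{P}(L)^n_{\mathcal{O}_K}$ of its generic fibre $V(J\cdot K)$, the assertion follows once I establish, for general $\underline a$, two statements: (i) $\mathrm{Subst}_{\underline a}(\mathfrak{L})=\mathrm{sat}(\langle\mathrm{Subst}_{\underline a}(\mathfrak{h}_1),\dots,\mathrm{Subst}_{\underline a}(\mathfrak{h}_\gamma)\rangle,\pi)$, i.e.\ substitution commutes with the saturation defining $\mathfrak{L}$; and (ii) $\langle\mathrm{Subst}_{\underline a}(\mathfrak{h}_i)\rangle\cdot K=\mathrm{Subst}_{\underline a}(\tilde{\mathfrak{L}})$ cuts out $V(\tilde I)=\overline{\tilde f_{\Gamma_{\underline a,\underline n}}(X)}$ in $\mathbb{P}(L)^n_K$. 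Reducing modulo $\pi$ at the end then produces the ``in particular'' clause.

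For statement (i) I would apply \cref{lem-redcrit} with $\cA=\mathcal{O}_K$, $\tilde{\mathfrak{I}}=\langle\mathfrak{h}_1,\dots,\mathfrak{h}_\gamma\rangle$ and $a=\pi$, so that $\mathfrak{I}=\mathfrak{L}$. This produces finite families $(\mathfrak{P}_i)_i$ and $(\tilde{\mathfrak{P}}_j)_j$ of non-zero polynomials in $\mathcal{O}_K[A_{ij}^{(l)}]$, each $\mathfrak{P}_i$ saturated with respect to every non-unit of $\mathcal{O}_K$, such that (i) holds for every $\underline a$ with $\mathrm{Subst}_{\underline a}(\mathfrak{P}_i)\in\mathcal{O}_K^{\times}$ for all $i$ and $\mathrm{Subst}_{\underline a}(\tilde{\mathfrak{P}}_j)\neq 0$ for all $j$. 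Exactly as in \cref{prop-submust}: saturatedness of $\mathfrak{P}_i$ forces $\mathfrak{P}_i\bmod\pi\not\equiv 0$, so the condition $\mathrm{Subst}_{\underline a}(\mathfrak{P}_i)\in\mathcal{O}_K^{\times}$ is a non-vanishing condition modulo $\pi$ and hence defines a non-empty general subset; the condition $\mathrm{Subst}_{\underline a}(\tilde{\mathfrak{P}}_j)\neq 0$ is a non-empty generic condition over $K$, which by \cref{rem:general} contains a non-empty general subset; and a finite intersection of general subsets is general. I thus obtain a non-empty general subset $U_1\subset\mathcal{O}_K^{d^2(n+1)}$ on which (i) holds.

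For statement (ii) the point is that $\mathrm{Subst}_{\underline a}(\tilde{\mathfrak{L}})$ and $\tilde I$ are produced by one and the same recipe — \cref{must:deg1} with the generic matrices $\mathfrak{g}_l$ versus \cref{must:deg} with $g_l$ — and that $\mathfrak{g}_l$ specialises to $g_l$ as soon as $\det M_l\neq 0$. I would verify that $\mathrm{Subst}_{\underline a}$, for general $\underline a$, commutes with each step of this recipe: Gröbner-basis elimination of the variables $y_l$, the subsequent saturation, the adjunction of $\langle(\alpha_i)_i\rangle$, and the elimination of the $\alpha_i$. The first and last are handled by \cref{lem-critgroeb} applied over the field $\cA=K$ (treating the $A_{ij}^{(l)}$ as parameters, so that the statement is essentially the existence of a comprehensive Gröbner basis, cf.\ \cite{weispfenning1992comprehensive}), using the third part of \cref{lem-critgroeb} on preservation of leading terms to see that substitution does not change which Gröbner-basis elements lie in a given elimination subring; the saturation is handled by \cref{lem-redcrit}; and adjoining $\langle(\alpha_i)_i\rangle$ commutes with any ring homomorphism. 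Alternatively one may argue geometrically via generic flatness of the universal family $V(\tilde{\mathfrak{L}})\to\mathbb{A}_K^{d^2(n+1)}$. The hypothesis that $X$ is geometrically irreducible with $\dim X=\dim X_{\overline K}$ is precisely what makes \cref{must:deg} and \cref{must:deg1} compute $\overline{\tilde f_\Gamma(X)}$ in the first place; so on the general subset $U_2\subset\mathcal{O}_K^{d^2(n+1)}$ obtained from the resulting constructible dense subset via \cref{rem:general} (possibly after a finite unramified extension of $K$, and intersected with $\{\det M_l\neq 0\text{ for all }l\}$) we get $V(\mathrm{Subst}_{\underline a}(\tilde{\mathfrak{L}}))=\overline{\tilde f_{\Gamma_{\underline a,\underline n}}(X)}=V(\tilde I)$. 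Setting $U=U_1\cap U_2$ and passing to closures in $\mathbb{P}(L)^n_{\mathcal{O}_K}$ then yields $V(\mathrm{Subst}_{\underline a}(\mathfrak{L}))=V(I)=V(I(X(\Gamma_{\underline a,\underline n})))$ for all $\underline a\in U$, and intersecting with the special fibre $V(\pi)$ gives the final assertion.

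The main obstacle is statement (ii): formation of elimination ideals and of closures of images does not commute with specialising the parameters $\underline a$ in general — only over a dense open locus of parameter space, beyond which spurious components of $\overline{\tilde f_\Gamma(X)}$ may appear — so the delicate points are to identify that locus cleanly (which is exactly where the density hypothesis $\dim X=\dim X_{\overline K}$ is used) and to check that it survives the passage, via \cref{rem:general}, from a generic subset over $K$ to a general subset over $\mathcal{O}_K$ after a finite unramified base change. Statement (i) and the reduction modulo $\pi$ are routine once \cref{lem-redcrit} is in hand, following verbatim the argument already carried out in \cref{prop-submust}.
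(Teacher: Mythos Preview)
Your proposal is correct and follows essentially the same approach as the paper: the paper's proof simply observes that $\mathrm{Subst}_{\underline a}(\mathfrak{L}')=I''$ for all $\underline a$ and then notes that every subsequent step in passing from $\mathfrak{L}'$ to $\mathfrak{L}$ (and from $I''$ to $I$) is a Groebner-basis computation, so ``the same arguments as in the proof of \cref{prop-submust}'' apply. Your (i)/(ii) decomposition and the explicit invocation of \cref{lem-redcrit} over $\mathcal{O}_K$ for the $\pi$-saturation and of \cref{lem-critgroeb} over $K$ for the elimination/saturation steps is exactly what that phrase unpacks to.
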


\begin{proof}
We first observe that $\mathrm{Subst}_{\underline{a}}(\mathfrak{L}')=I''$ for all $\underline{a}$.  The corollary follows when we prove that $\mathrm{Subst}_{\underline{a}}(\mathfrak{L})=I$ for a general choice of $\underline{a}$. However, all steps involved in the definitions of $\mathfrak{L}$ and $I$ may be resolved by Groebner basis computations. Therefore, the same arguments as in the proof of \cref{prop-submust} yield the desired result.
\end{proof}

\subsection{Proof of \cref{thm-main1}}
\label{sec:proof}
We are now ready to give the proof of \cref{thm-main1}. First, we let $K$ be equal to its maximal unramified field extension. Therefore, since $k$ is a perfect field, we have $k$ algebraically closed. We further assume that \cref{conj-must} holds. Let $U_{\underline{a},n}$ be the general subset of $\mathcal{O}_K^{d^2(n+1)}$ in \cref{conj-must}, i.e. $I(\mathcal{M})(\Gamma_{\underline{a},\underline{n}})=\bigcap I_v$ for all $\underline{a}\in U_{\underline{n},n}$. Let $X$ be a projective variety as stated in the theorem. Furthermore, let $V_n$ the general subset in \cref{cor-subvar}, i.e. $\mathrm{Subst_{\underline{a}}}(\mathfrak{L})=I(X(\Gamma_{\underline{a},\underline{n}}))$ for all $\underline{a}\in V_n$. In particular, we have $\mathrm{Subst_{\underline{a}}}(\mathfrak{L})\,\mathrm{mod}\,\pi=I(X(\Gamma_{\underline{a},\underline{n}})_k)$ for all $\underline{a}\in V_n$.\\
Recall that since we assumed that \cref{conj-must} holds, we have for all $\underline{a}\in U_{\underline{n},n}$ that
\begin{equation}
\mathcal{M}(\Gamma_{\underline{a},\underline{n}})_k=\bigcap I_v,
\end{equation}
where the intersection runs over all $v=(v_0,\dots,v_n)\in\mathbb{Z}^{n+1}$, such that $\sum v_i=n(d-1)$ and $0\le v_i\le d-1$. For such $\underline{a}$, we have
\begin{equation}
\mathcal{M}(\Gamma_{\underline{a},\underline{n}})_{k,\le l}=\bigcap_{v\in Y_l} I_v,
\end{equation}
where $Y_l$ is the set of all $v=(v_0,\dots,v_n)\in\mathbb{Z}^{n+1}$, such that $\sum v_i=n(d-1)$, $0\le v_i\le d-1$ and $\left|\{i\in\{0,\dots,n\}\mid v_i<d-1\}\right|\le l$. We note that $\mathcal{M}(\Gamma_{\underline{a},\underline{n}})_{k}=\bigcap_{Y_{d-1}} I_v$.\\
We denote $\mathfrak{L}_k=\mathfrak{L}\,\mathrm{mod}\,\pi$. Let $f_1^{(l)},\dots,f_{t_l}^{(l)}\in$ be generators of $\bigcap_{Y_l} I_v$ and let $G_i^{(l)}$ be a Groebner basis of $\langle\mathcal{L},1-yf_i^{(l)}\rangle_{\mathcal{O}_K[A_{ij}^{(l')}]}$ with respect to some order $<$, such that $y$ is the maximal element among $\{y,(x_{ij})_{i,j})\}$. Now, let $\delta_n=\mathrm{min}_{l}((G_i^{(l)}\cap k[A_{ij}^{(l)}]\neq(0)\,\textrm{for all}\, i)$.\\
As $k$ is algebraically closed, we have $X(\Gamma_{\underline{a},\underline{n}})_k\subset\bigcap_{Y_l}V(I_v)$ is equivalent to $1\in \langle I(X(\Gamma_{\underline{a},\underline{n}})_k),1-yf_i^{(l)}\rangle$ for all $i$, since this is equivalent to $f_i\in\sqrt{I(X(\Gamma_{\underline{a},\underline{n}})_k)}=I(X(\Gamma_{\underline{a},\underline{n}})_k^{\textrm{red}})$, where $X(\Gamma_{\underline{a},\underline{n}})_k^{\textrm{red}}$ is the unique reduced scheme on the topological space underlying $X(\Gamma_{\underline{a},\underline{n}})_k$. Moreover, by the same argument as in the proof of \cref{prop-submust}, we have that there exists a general subset $V'^{(l)}_n\subset \mathcal{O}^{d^2(n+1)}$, such that $\mathrm{Subst}_{\underline{a}}(G_i^{(l)})$ is a Groebner basis of $\langle \mathrm{Subst}_{\underline{a}}(\mathfrak{L}_k),1-yf_i\rangle$ for all $\underline{a}\in V'^{(l)}_n$. Therefore, we have for all $\underline{a}\in V''^{(l)}_n=V\cap V'^{(l)}_n$ that $\mathrm{Subst}_{\underline{a}}(G_i^{(l)})$ is a Groebner basis of  $\langle I(X(\Gamma_{\underline{a},\underline{n}})_k),1-yf_i\rangle$ with respect to the restriction $<$ to $k[(x_{ij})_{i,j}]$. Thus, we have $1\in\langle I(X(\Gamma_{\underline{a},\underline{n}})_k),1-yf_i\rangle$ is equivalent to $\mathrm{Subst}_{\underline{a}}(G_i^{(l)})\cap k\neq (0)$. Furthermore, we observe that after possibly shrinking $V''^{(l)}_n$, we have that $G_i^{(l)}\cap k[A_{ij}^{(l)}]\neq(0)$ is equivalent to $\mathrm{Subst}_{\underline{a}}(G_i^{(l)})\cap k\neq (0)$. Let $\tilde{V}_n=\cap_l V''^{(l)}_n\cap U_{\underline{n},n}$. Therefore, we have 
\begin{equation}
X(\Gamma_{\underline{a},\underline{n}})_k\subset V(\bigcap_{v\in Y_l}I_v)\subset V(\bigcap_{Y_{d-1}}I_v)=\mathcal{M}(\Gamma_{\underline{a},\underline{n}})_k
\end{equation}
if and only if $G_i^{(l)}\cap k[A_{ij}^{(l')}]\neq(0)$ for all $\underline{a}\in\tilde{V}_n$. As $\tilde{V}_n$ is a finite intersection of non-empty general subsets, the set $\tilde{V}$ is a non-empty general subset of $\mathcal{O}_K^{d^2(n+1)}$ as well.\\
By the above considerations, we have also shown that
\begin{equation}
\delta_n=\mathrm{min}_{l}\left(X(\Gamma_{\underline{a},\underline{n}})_k\subset\bigcap_{v\in Y_l}I_v\right)
\end{equation}
for all $\underline{a}\in\tilde{V}_n$. Now, let $v\in Y_{\delta_n}\backslash Y_{\delta_n-1}$. Then, we have $V(I_v)\subset\mathcal{M}(\Gamma_{\underline{a},\underline{n}})_{k,\le \delta_n}$ but $V(I_v)\subset\mathcal{M}(\Gamma_{\underline{a},\underline{n}})_{k,\le \delta_n-1}$ for all $\underline{a}\in U_{\underline{n},n}$. Recall that $f_1^{(\delta_n-1)},\dots,f_{t_{\delta_n-1}}^{(\delta_n-1)}$ is a set of generators of $\bigcap_{Y_{\delta_n-1}}I_v$. Then, we have that $X(\Gamma_{\underline{a},\underline{n}})_k\cap V(I_v)\subset\mathcal{M}(\Gamma)_{k,\le l-1}$ if and only if we have $1\in \langle I(X(\Gamma_{\underline{a},\underline{n}})_k,I_v,1-yf_i^{(\delta_n-1)}\rangle$ for all $i$. Let $G_i^{(\delta_n-1),v}$ be a Groebner basis of $\langle\mathcal{L},I_v,1-yf_i^{(\delta_n-1)}\rangle$. By the same argument as above, after possibly shrinking $\tilde{V}_n$ we have $1\in \langle I(X(\Gamma_{\underline{a},\underline{n}})_k,I_v,1-yf_i^{(\delta_n-1) }\rangle$ if and only if $G_i^{(\delta_n-1),v}\cap [A_{ij}^{(l)}]\neq (0)$ for all $\underline{a}\in\tilde{V}_n$. As for all $\underline{a}\in\tilde{V}_n$, we have that
\begin{equation}
X(\Gamma_{\underline{a},\underline{n}})_k\subset V(\bigcap_{Y_{\delta_n}}I_v)=\bigcup_{Y_{\delta_n}}V(I_v)
\end{equation}
but 
\begin{equation}
X(\Gamma_{\underline{a},\underline{n}})_k\not\subset V(\bigcap_{Y_{\delta_n-1}}I_v)=\bigcup_{Y_{\delta_n-1}}V(I_v).
\end{equation}
Therefore, there exists an irreducible component $\tilde{X}$ of $X(\Gamma_{\underline{a},\underline{n}})_k$ and $v\in Y_{\delta_n}\backslash Y_{\delta_n-1}$, such that $\tilde{X}\subset V(I_v)$ but $\tilde{X}\not\subset\bigcup_{Y_{\delta_n-1}}V(I_v)$. In particular, we have $\tilde{X}\cap V(I_v)\not\subset\bigcup_{Y_{\delta_n-1}}V(I_v)$. Thus, there exists $\ell\in\{1,\dots,t_{\delta_n-1}\}$, such that $\mathrm{Subst_{\underline{a}}}(G_\ell^{(\delta_n-1}),v)\cap k=(0)$. Since $\underline{a}\in\tilde{V}_n$, we have $G_{\ell}^{(\delta_n-1),v}\cap k[A_{ij}^{(l)}]=(0)$.\\
Now, let $\sigma$ be a permutation on $\{0,\dots,n\}$. Then $\sigma$ acts naturally on the sets $Y_l$ by permuting the entries of the $v\in Y_l$. Moreover, we consider the ring automorphism
\begin{equation}
\tau_{\sigma}:k[A_{ij}^{(l)}][x_{ij}]\to k[A_{ij}^{(l)}][x_{ij}]
\end{equation}
induced by $x_{ij}\mapsto x_{i\sigma(j)}$ and $A_{ij}^{(l)}\mapsto A_{ij}^{\sigma(l)}$. Then, for any $\sigma$, we have $\tau_{\sigma}(I_v)=I_{\sigma(v)}$. Moreover, by construction, we have $\tau_{\sigma}(\mathfrak{L}_k)=\mathfrak{L}_k$ for any $\sigma$. We note that as $\sigma(\bigcap_{Y_{\delta_n-1}}I_v)=\bigcap_{Y_{\delta_n-1}}I_v$, we have $\sigma(f_{\ell}^{\delta_n-1})\in\bigcap_{Y_{\delta_n-1}}I_v$. Moreover, we have $\sigma(G_{\ell}^{\delta_n-1})$ is a Groebner basis of $\langle\mathfrak{L}_k,I_{\sigma(v)},1-y\sigma(f_{\ell}^{\delta_n-1})\rangle$ and $(0)=\sigma(G_{\ell}^{(\delta_n-1),v}\cap k[A_{ij}^{(l)}])=\sigma(G_{\ell}^{\delta_n-1})\cap k[A_{ij}^{(l)}]$. Therefore, after possibly shrinking $\tilde{V}_n$, we have $1\notin \langle I(X(\Gamma_{\underline{a},\underline{n}})_k),I_{\sigma(v)},1-y\sigma(f_{\ell}^{\delta_n-1})\rangle$. Thus, we have 
\begin{equation}
X(\Gamma_{\underline{a},\underline{n}})_k\cap V(I_{v})\not\subset V(\bigcap_{w\in Y_{\delta_n-1}}I_w)\quad\textrm{and}\quad X(\Gamma_{\underline{a},\underline{n}})_k\cap V(I_{\sigma(v)})\not\subset V(\bigcap_{w\in Y_{\delta_n-1}}I_w)
\end{equation}
for all $\underline{a}\in\tilde{V}_n$. Inductively, we obtain that after possibly shrinking $\tilde{V}_n$ again, we have
\begin{equation}
\label{equ-subset}
X(\Gamma_{\underline{a},\underline{n}})_k\cap V(I_{\sigma(v)})\not\subset V(\bigcap_{w\in Y_{\delta_n-1}}I_w)=\bigcup_{w\in Y_{\delta_n-1}}V(I_w)
\end{equation}
for all permutations $\sigma$ and all $\underline{a}\in\tilde{V}$.\\
Our next goal is to show that for each $Z\subset\{0,\dots,n\}$ with $|Z|=\delta_n$, we have that there exists an irreducible component $X_Z$ of $X(\Gamma_{\underline{a},\underline{n}})_k$, such that
\begin{equation}
X_Z\subset\bigcup_{\substack{v\textrm{, s.th.}\\\mathrm{supp}(V(I_v))=Z}}V(I_v)
\end{equation}
for all $\underline{a}\in \tilde{V}_n$. By \cref{equ-subset}, there exists an irreducible component $\tilde{X}$ of $X(\Gamma_{\underline{a},\underline{n}})_k$, such that $\tilde{X}\subset\bigcup_{v\in Y_{\delta_n}}V(I_v)$ and 
\begin{equation}
\tilde{X}\cap\bigcup_{\substack{v\textrm{, s.th.}\\\mathrm{supp}(V(I_v))=Z}}V(I_v)\not\subset\bigcup_{v\in Y_{\delta_n-1}}V(I_v).
\end{equation}
If 
\begin{equation}
\tilde{X}\not\subset\bigcup_{\substack{v\textrm{, s.th.}\\\mathrm{supp}(V(I_v))=Z}}V(I_v),
\end{equation}
 there exists $w\in Y_{\delta_n}$ with $\mathrm{supp}V(I_w)\neq Z$ with $\tilde{X}\subset V(I_w)$. However, this yields 
\begin{align}
&\tilde{X}\cap\bigcup_{\substack{v\textrm{, s.th.}\\\mathrm{supp}(V(I_v))=Z}}V(I_v)=(\tilde{X}\cap V(I_w))\cap\bigcup_{\substack{v\textrm{, s.th.}\\\mathrm{supp}(V(I_v))=Z}}V(I_v)\\
&=\tilde{X}\cap \left(V(I_w)\cap\bigcup_{\substack{v\textrm{, s.th.}\\\mathrm{supp}(V(I_v))=Z}}V(I_v)\right)\subset \left(V(I_w)\cap\bigcup_{\substack{v\textrm{, s.th.}\\\mathrm{supp}(V(I_v))=Z}}V(I_v)\right)\subset \bigcup_{v\in Y_{\delta_n-1}}V(I_v).
\end{align}
This contradics \cref{equ-subset}, which proves the claim that for all $\underline{a}\in\tilde{V}_n$ and for any $Z\subset\{0,\dots,n\}$ with $|Z|=\delta_n$, there exists an irreducible component $X_Z$ of $X(\Gamma_{\underline{a},\underline{n}})_k$, such that
\begin{equation}
X_Z\subset\bigcup_{\substack{v\textrm{, s.th.}\\\mathrm{supp}(V(I_v))=Z}}V(I_v).
\end{equation}
Thus, for all $\underline{a}\in\tilde{V}_n$ there exist at least
\begin{equation}
\binom{n}{\delta_n}
\end{equation}
irreducible components of $X(\Gamma_{\underline{a},\underline{n}})_k$.\\
If for all $n$, we have $\delta_n\le d'$, we are done. Now, assume there exists $n_0$, such that $\delta_{n_0}>d'$.  In order to reach a contradiction, we first observe that if $n_0\gg0$, we have
\begin{equation}
\label{equ-binom}
\binom{n_0}{\delta_{n_0}}>\mathrm{deg}(X)\binom{n_0+d'}{d'}.
\end{equation}
Therefore, the number of irreducible components of $X(\Gamma_{\underline{a},\underline{n}})_k$ would exceed the upper bound in \cref{lem-upbound}, which yields a contradiction. Therefore, for all sufficiently large $n$ the theorem holds. Now, let $n_0$ be arbitrary and $n_1$, such that \cref{equ-binom} holds. We consider the projection $\mathrm{Pr}:\mathcal{O}_K^{d^2(n_1+1}\to\mathcal{O}_K^{d^2(n_0+1)}$ mapping $(a_{ij}^{(l)})_{\substack{i,j=1,\dots,d\\l=0,\dots,n_1}}$ to $(a_{ij}^{(l)})_{\substack{i,j=1,\dots,d\\l=0,\dots,n_0}}$. Then $\mathrm{Pr}^{-1}(\tilde{V}_{n_0})$ is a general subset of $\mathcal{O}_K^{d^2(n_1+1}$. Then, we have already shown that for all $\underline{a}\in\tilde{V}_{n_1}\cap\mathrm{Pr}^{-1}(\tilde{V}_{n_0})$ that $\mathcal{X}(\Gamma_{\underline{a},\underline{n}})\subset\mathcal{M}(\Gamma_{\underline{a},\underline{n}})_{k,\le d'}$. We see that under the projection
\begin{equation}
pr_{n_1,n_2}:\mathbb{P}(L)^{n_1+1}\to\mathbb{P}(L)^{n_0+1}
\end{equation}
onto the first $n_0+1$ factors, we have that $pr_{n_1,n_2}(\mathcal{M}(\Gamma_{\underline{a},\underline{n}})_{k,\le d'})=\mathcal{M}(\Gamma_{\mathrm{Pr}(\underline{a}),\underline{n}})_{k,\le d'}$ and $pr_{n_1,n_2}(X(\Gamma_{\underline{a}},\underline{n}))=X(\Gamma_{\mathrm{Pr}(\underline{a}}),\underline{n})$. Thus, we have $X(\Gamma_{\mathrm{Pr}(\underline{a}),\underline{n}})\subset\mathcal{M}(\Gamma_{\mathrm{Pr}(\underline{a}),\underline{n}})_{k,\le d'}$ for all $\underline{a}\in \tilde{V}_{n_1}\cap\mathrm{Pr}^{-1}(\tilde{V}_{n_0})$. Therefore, we have
\begin{equation}
X(\Gamma_{\underline{a},\underline{n}})\subset\mathcal{M}(\Gamma_{\mathrm{Pr}(\underline{a}),\underline{n}})_{k,\le d'}
\end{equation}
for all $\underline{a}\in \tilde{\tilde{V}}_{n_0}\subset\mathrm{Pr}(\tilde{V}_n\cap\mathrm{Pr}^{-1}(\tilde{V}_{n_0})$. As projection is an open morphism, we have that $\tilde{\tilde{V}}_{n_0}$ is a general subset of $\mathcal{O}_K^{n_0+1}$, which finishes the proof when $k$ is algebraically closed.\\
Now, let $K$ be any discretely valuated field with perfect residue field $k$. We note that when passing to the maximal unramified field extension $K^{un}$ of $K$, we have that the residue field of $K^{un}$ is algebraically closed. We have thus already proved that there exists an general subset $U_n\subset\mathcal{O}_{K^{un}}^{d^2(n+1)}$, such that $X(\Gamma_{\underline{a},\underline{n}})_k\subset\mathcal{M}(\Gamma_{\underline{a},\underline{n}})_{k,\le d'}$. Moreover, we see immediately that for all $\underline{a}$ contained in the general subset $U_n\cap \mathcal{O}_K^{d^2(n+1)}$ the same as true.  However, the set $ U_n\cap \mathcal{O}_K^{d^2(n+1)}$ might be empty. Since there exists a finite unramified field extension $K'$ of $K$, such that $U_n\cap \mathcal{O}_{K'}^{d^2(n+1)}\neq\emptyset$ the theorem follows.
\qed

\subsection{Models of syzygy bundles on curves}
\label{sec:models}
After studying degenerations of projective varieties, we will now turn our focus to degenerations of syzygy bundles on projective curves. Recall that $K$ is a discretely valued field with ring of integers $\mathcal{O}_K$ and residue field $k$.\par 
We fix two positive integers $n\geq 2$ and $\rho$ and  non-negative numbers $d_0,\dots,d_{n}\le \rho$ with $\sum_{j =0}^{n} d_j=n\rho$. Let $F_0,\dots,F_{n}$ be polynomials with
\begin{equation}
F_i\in\mathrm{Sym}_{j\neq i}\mathcal{O}_K[x_{1j},\dots,x_{dj}]^{(\rho-d_j)},
\end{equation}
where $\mathcal{O}_K[x_{1j},\dots,x_{dj}]^{(\rho-d_j)}$ denotes the $\mathcal{O}_K$-submodule of the polynomial ring consisting of all homogenous polynomials of degree $\rho - d_j$. Hence $F_i$ is a linear combination of products of $n$ homogenous polynomials, each in a different set of variables. 

For a fixed choice 
\begin{equation}
\underline{a}=\left(a_{ij}^{(l)}\right)_{\substack{i,j=1,2,3\\l=0,\dots,n}}\in \mathcal{O}_K^{9(n+1)},
\end{equation}
such that all $g_i$ are invertible and a projective curve $X\subset\mathbb{P})V)$, let $X(\Gamma_{\underline{a},\underline{n}})$ be the associated Mustafin model of $X$. Now, we fix $d_i$ as above and consider the $\mathcal{O}_K$-linear ring homomorphism

\begin{align}
\label{equ:morphwell}
\begin{split}
\Upsilon_i^{\underline{a}}:\mathrm{Sym}_{j\neq i}\mathcal{O}_K[x_{1j},\dots,x_{dj}]^{(\rho-d_j)}&\to K[x_1,\dots,x_d]^{(d_i)}
\end{split}
\end{align}

induced by

\begin{equation}
\begin{pmatrix}
x_{1j}\\
\vdots\\
x_{dj}
\end{pmatrix}
\mapsto g_j^{-1}
\begin{pmatrix}
x_1\\
\vdots\\
x_d
\end{pmatrix}.
\end{equation}
Note that \cref{equ:morphwell} is well-defined due to the fact that $\sum_{j=1}^{n+1} d_j=n\rho$, which yields $d_i=\sum_{j\neq i}(\rho-d_j)$.

 We denote by $\Sigma_i$ the subset of $\mathrm{Sym}_{j\neq i}\mathcal{O}_K[x_{1j},\dots,x_{dj}]^{(\rho-d_j)}$ of polynomials $F$, such that the saturation $F'$ reduces to a polynomial $\overline{F}'$ modulo $\mathfrak{m}$ which satisfies
\begin{equation}
\overline{F}'\not\in \langle (x_{1j},\dots,x_{(d-1)j})_{j\neq i}\rangle.
\end{equation}

\begin{definition}
Let $f_0,\dots,f_{n}$ be polynomials with $f_i \in K[x_1,\dots,x_d]^{(d_i)}$ with degrees $d_i$ as above. Then we  say that the tuple $(f_0,\dots,f_{n})$ is \textit{$(\underline{d},\underline{a})-$admissible} if $f_i\in\Upsilon_i^{\underline{a}}(\Sigma_i)$.
\end{definition}

We are now ready to state our second main result.

\begin{proposition}
\label{thm:proj}
We fix natural numbers $n\ge2$ and $\rho$ and non-negative integers $d_0,\dots,d_{n}\leq \rho$, such that $\sum d_i=n\rho$. Furthermore, let $f_0,\dots,f_{n}$ be polynomials with $f_i \in K[x_1,\dots,x_d]^{(d_i)}$. \par 
Let $X\subset\bigcup_{i=0}^{n} D_+(f_i)\subset\mathbb{P}(V)$ be a smooth projective curve, and let $\underline{a} \in \mathcal{O}_K^{9(n+1)}$ be a choice of coefficients, such that all $g_i$ are invertible, $X(\Gamma_{\underline{a},\underline{n}})$ has star-like reduction and such that $(f_0,\dots,f_{n})$ is $(\underline{d},\underline{a})-$admissible.\par
Then there exists a vector bundle $\cE$  on $X(\Gamma_{\underline{a},\underline{n}})$ with generic fibre $E=\restr{\mathrm{Syz}(f_0,\dots,f_{n})(\rho)  }{C}$ whose special fibre is trivial on all reduced irreducible components of $X(\Gamma_{\underline{a},\underline{n}})_k$. 
\end{proposition}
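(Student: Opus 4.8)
The plan is to realise $\cE$ as the kernel of a Mustafin‑twisted copy of the defining presentation of the syzygy bundle, built from the polynomials that the admissibility hypothesis supplies as lifts of $f_0,\dots,f_{n}$. By $(\underline d,\underline a)$‑admissibility one may pick, for each $i$, a polynomial $F_i\in\Sigma_i$ with $\Upsilon_i^{\underline a}(F_i)=f_i$; let $F_i'=\pi^{-v_i}F_i$ be its primitive part, so that $\overline{F_i'}:=F_i'\bmod\pi$ is nonzero and $\overline{F_i'}\notin\langle (x_{1j},\dots,x_{(d-1)j})_{j\neq i}\rangle$. On the ambient multiprojective space $\mathcal N=\mathbb P(L_0)\times_{\mathcal O_K}\cdots\times_{\mathcal O_K}\mathbb P(L_{n})$ of $\mathcal M(\Gamma_{\underline a,\underline n})$ I write $\mathcal O(e_l)$ for the pull‑back of $\mathcal O(1)$ along the $l$‑th projection, and set $\mathcal M:=\bigotimes_{l}\mathcal O((\rho-d_l)e_l)$ and $\mathcal L_i:=\mathcal O((\rho-d_i)e_i)$. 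Since $F_i'$ is multihomogeneous of degree $\rho-d_j$ in the $j$‑th block for $j\neq i$ and of degree $0$ in the $i$‑th block, it is a global section of $\mathcal M\otimes\mathcal L_i^{-1}=\bigotimes_{j\neq i}\mathcal O((\rho-d_j)e_j)$, hence a morphism $F_i'\colon\mathcal L_i\to\mathcal M$; these assemble into $\Phi=(F_0',\dots,F_{n}')\colon\bigoplus_{i=0}^{n}\mathcal L_i\to\mathcal M$, which I restrict to $\mathcal Z:=X(\Gamma_{\underline a,\underline n})\subset\mathcal N$.

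Next I would check that $\Phi|_{\mathcal Z}$ is surjective. On the generic fibre each $\mathcal O(e_l)$ restricts to $\mathcal O_{\mathbb P(V)}(1)$, and since $\Upsilon_i^{\underline a}(F_i)=f_i$ the map $\Phi_K$ coincides with $(f_0,\dots,f_{n})$ after rescaling the summands by units, hence is surjective because $X\subset\bigcup_i D_+(f_i)$. On the special fibre, the star‑like reduction hypothesis together with \cref{cor-curves} places every reduced irreducible component $\tilde X$ of $X(\Gamma_{\underline a,\underline n})_k$ inside a primary component $P_l$ of $\mathcal M(\Gamma_{\underline a,\underline n})$, which by \cref{conj-must} equals $V(\langle (x_{1j},\dots,x_{(d-1)j})_{j\neq l}\rangle)$; reducing $F_l'$ modulo $\pi$ and then modulo the ideal of $P_l$ kills every monomial except $c\prod_{j\neq l}x_{dj}^{\rho-d_j}$, and $c\neq 0$ in $k$ precisely by $F_l\in\Sigma_l$, so the $l$‑th entry of $\Phi_k$ is nowhere zero along $\tilde X$ and $\Phi_k$ is surjective. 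A Nakayama argument using properness of $\mathcal Z\to\mathrm{Spec}\,\mathcal O_K$ then shows that $\coker\Phi$ vanishes on both fibres, hence vanishes, so $\cE:=\ker(\Phi|_{\mathcal Z})$ is locally free of rank $n$, flat over $\mathcal O_K$, with generic fibre $\ker\bigl(\bigoplus_i\mathcal O_C(\rho-d_i)\xrightarrow{(f_0,\dots,f_{n})}\mathcal O_C(\rho)\bigr)=\Syz(f_0,\dots,f_{n})(\rho)|_C=E$; the twist is exactly $\rho$ because $\sum_l(\rho-d_l)=\rho$, which is why the multidegree of $\mathcal M$ was chosen that way.

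Finally I would prove triviality of the special fibre on each reduced irreducible component. Fix $\tilde X\subset P_l$. On $P_l$ the projection $p_j$ is the constant point $[0:\cdots:0:1]$ for $j\neq l$, so $\mathcal O(e_j)|_{P_l}$ is trivial (trivialised by the coordinate $x_{dj}$) for $j\neq l$, while $p_l$ identifies $P_l$ with $\mathbb P(L)_k$, so $\mathcal O(e_l)|_{P_l}\cong\mathcal O(1)$. Consequently $\mathcal L_i|_{\tilde X}$ is trivial for $i\neq l$ and $\mathcal L_l|_{\tilde X}\cong\mathcal M|_{\tilde X}$, and in these trivialisations the $l$‑th component of $\Phi|_{\tilde X}$ is multiplication by the nowhere‑vanishing section $\overline{F_l'}|_{\tilde X}=c\prod_{j\neq l}x_{dj}^{\rho-d_j}$, i.e.\ an isomorphism $\mathcal L_l|_{\tilde X}\xrightarrow{\ \sim\ }\mathcal M|_{\tilde X}$. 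Hence $\cE|_{\tilde X}$ is a direct complement of $\mathcal L_l|_{\tilde X}$ inside $\bigoplus_i\mathcal L_i|_{\tilde X}$, so $\cE|_{\tilde X}\cong\bigoplus_{i\neq l}\mathcal L_i|_{\tilde X}\cong\mathcal O_{\tilde X}^{\oplus n}$ is trivial, which is the assertion.

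I expect the main obstacle to be matching the combinatorics of the special fibre of $\mathcal M(\Gamma_{\underline a,\underline n})$ delivered by \cref{conj-must} and \cref{thm-main1} with the geometry of the morphisms $F_i'$: one must know that the length‑$\le 1$ components of $\mathcal M(\Gamma_{\underline a,\underline n})_k$ are exactly the primary components $P_l=V(\langle (x_{1j},\dots,x_{(d-1)j})_{j\neq l}\rangle)$ (so that star‑like reduction really confines each reduced component of $X(\Gamma_{\underline a,\underline n})_k$ to a single $P_l$), and that membership $F_i\in\Sigma_i$ is precisely the condition ensuring $\overline{F_i'}$ is non‑vanishing on $P_i$ — the point that lets restriction to $P_l$ split the $l$‑th summand off the presentation and collapse $\cE|_{\tilde X}$ to a trivial bundle. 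The remaining steps — the vanishing of $\coker\Phi$ via Nakayama and properness, and the bookkeeping $\sum_l(\rho-d_l)=\rho$ fixing the twist — should be routine.
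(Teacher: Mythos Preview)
Your proof is correct and is precisely the adaptation of \cite[Theorem~4.3]{HWplane} that the paper's one-line proof invokes. One small cleanup: you need not cite \cref{cor-curves} or \cref{conj-must} here --- the star-like reduction hypothesis is itself the statement that every irreducible component of the special fibre lies in some primary component $P_l$, and the identification $P_l=V(\langle(x_{1j},\dots,x_{(d-1)j})_{j\neq l}\rangle)$ follows directly from the definition of $I_v$, not from the conjecture.
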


\begin{remark}
As explained in \cref{sec:semi}, this  implies that the bundle $E=\restr{\mathrm{Syz}(f_1,\dots,f_{n+1})(\rho)  }{C}$ is semistable of degree $0$ on $C$.
\end{remark}

\begin{proof}
The proof is analogous to \cite[theorem 4.3]{HWplane}.
\end{proof}

This result immediately implies the following result.

\begin{corollary}
Assume that $K$ is contained in $\overline{\mathbb{Q}}_p$ and $n\ge2$. Let  $f_i \in K[x_1,\dots,x_d]^{(d_i)}$ be $n+1$ polynomials of homogenous degrees $d_i \leq \rho$ satisfying $\sum d_i=n\rho$. Consider a connected smooth plane curve $X$ contained in $\bigcup_{i=1}^{n+1} D_+(f_i)\subset\mathbb{P}(V)$ and assume that there exists a choice of matrix coefficients $\underline{a} \in \mathcal{O}_K^{9(n+1)}$, such that all $g_i$ are invertible, $X(\Gamma_{\underline{a},\underline{n}})$ has star-like reduction and such that $(f_1,\dots,f_{n+1})$ is $(\underline{d},\underline{a})-$admissible.\par
Then the base change of the syzygy bundle $E=\restr{\mathrm{Syz}(f_0,\dots,f_{n})(\rho)}{X}$  to  $\mathbb{C}_p$ has strongly semistable reduction in the sense of \cref{def:strongred}.
\end{corollary}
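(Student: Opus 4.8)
The plan is to deduce the corollary from \cref{thm:proj} together with the base-change formalism behind \cref{def:strongred}. Since $X$ is a plane curve we are in the case $d=3$ (so $\underline a\in\mathcal O_K^{9(n+1)}=\mathcal O_K^{d^2(n+1)}$), and by hypothesis $X\subset\bigcup_{i=0}^{n}D_+(f_i)$ is a connected smooth projective curve, the $g_i$ are invertible, $X(\Gamma_{\underline a,\underline n})$ has star-like reduction, and $(f_0,\dots,f_n)$ is $(\underline d,\underline a)$-admissible. Applying \cref{thm:proj} therefore produces a vector bundle $\cE$ on $X(\Gamma_{\underline a,\underline n})$ with generic fibre $E=\restr{\Syz(f_0,\dots,f_n)(\rho)}{X}$ whose special fibre is trivial on every reduced irreducible component of $X(\Gamma_{\underline a,\underline n})_k$. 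Recall also, from the discussion following the definition of Mustafin models, that $X(\Gamma_{\underline a,\underline n})$ is a flat and proper $\mathcal O_K$-scheme with generic fibre $X$.

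Next I would perform the base change to the ring of integers of $\C_p$. Since $K$ is discretely valued and contained in $\overline{\Q}_p$, it is a finite extension of $\Q_p$, so there are ring maps $\mathcal O_K\hookrightarrow\overline{\Z}_p\hookrightarrow\mathfrak o$. Set $\Ch\coloneqq X(\Gamma_{\underline a,\underline n})\otimes_{\mathcal O_K}\overline{\Z}_p$; this is finitely presented, flat and proper over $\overline{\Z}_p$ with generic fibre $X_{\overline{\Q}_p}$, hence a model of $X_{\overline{\Q}_p}$ in the sense required by \cref{def:strongred}. Let $\cE_{\mathfrak o}$ be the pullback of $\cE$ to $\Ch_{\mathfrak o}=\Ch\otimes_{\overline{\Z}_p}\mathfrak o$; it is a vector bundle with generic fibre $E_{\C_p}$, and its special fibre $(\cE_{\mathfrak o})_{\overline{\mathbb{F}}_p}$ is the pullback of $\cE_k$ along $X(\Gamma_{\underline a,\underline n})_k\otimes_k\overline{\mathbb{F}}_p\to X(\Gamma_{\underline a,\underline n})_k$. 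Since triviality of a vector bundle is stable under base change and every irreducible component of $X(\Gamma_{\underline a,\underline n})_{\overline{\mathbb{F}}_p}$ maps onto an irreducible component of $X(\Gamma_{\underline a,\underline n})_k$, the bundle $(\cE_{\mathfrak o})_{\overline{\mathbb{F}}_p}$ is again trivial on each reduced irreducible component.

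Finally, a trivial bundle $\Oh^r$ on a smooth projective curve over $\overline{\mathbb{F}}_p$ is strongly semistable of degree zero: it has degree zero, it is semistable, and for the absolute Frobenius $F$ every pullback $F^{m*}\Oh^r$ is again isomorphic to $\Oh^r$, hence semistable. Restricting to each normalized irreducible component of $X(\Gamma_{\underline a,\underline n})_{\overline{\mathbb{F}}_p}$, on which $\cE_{\mathfrak o}$ is trivial, \cref{def:strong} then shows that $(\cE_{\mathfrak o})_{\overline{\mathbb{F}}_p}$ is strongly semistable of degree zero. Thus $\Ch_{\mathfrak o}$ together with $\cE_{\mathfrak o}$ exhibits $E_{\C_p}$ as a bundle with strongly semistable reduction of degree zero, which is the assertion of the corollary. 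The only genuinely substantive input is \cref{thm:proj}; the remaining work is routine base-change bookkeeping, and the single point requiring care is that the possibly finite residue field $k$ of $K$ must be enlarged to $\overline{\mathbb{F}}_p$ before \cref{def:strong} can be invoked — which is precisely what the base change to $\mathfrak o$ accomplishes.
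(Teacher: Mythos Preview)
Your argument is correct and is exactly the routine base-change verification the paper leaves implicit when it says the corollary ``immediately'' follows from \cref{thm:proj}. The paper gives no separate proof beyond that remark, and your unpacking (apply \cref{thm:proj}, base change the resulting model and bundle to $\overline{\Z}_p$ and $\mathfrak o$, then observe that a trivial bundle is strongly semistable of degree zero) is precisely the intended reasoning.
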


\bibliographystyle{alpha}
\bibliography{literature.bib}

\end{document}